\numberwithin{equation}{section}
\def\3bar{{|\hspace{-.02in}|\hspace{-.02in}|}}
\def\E{{\mathcal{E}}}
\def\T{{\mathcal{T}}}
\def\dQ{{\mathbb{Q}}}
\def\bQ{{\mathbf{Q}}}
\def\btau{\boldsymbol{\tau}}
\def\b0{\boldsymbol{0}}
\def\sumT{\sum_{T\in\mathcal{T}_h}}     %new
\def\sumTa{\sum_{T_s\in\mathcal{T}_{s,h}}}     %new
\def\trb{|\!|\!|}
\def\bw{{\mathbf{w}}}
\def\bu{{\mathbf{u}}}
\def\bv{{\mathbf{v}}}
\def\bn{{\mathbf{n}}}
\def\be{{\mathbf{e}}}
\def\bf{{\mathbf{f}}}
\newtheorem{remark}{Remark}[section]
\newtheorem{algorithm1}{WG-MFEM Scheme}
 \newcommand{\Real}{\mathbb{R}}
 \newcommand{\Tensor}{\mathbb{T}}
\title{ A weak Galerkin-mixed finite element method for the Stokes-Darcy problem}
 \author{
 Hui Peng\thanks{School of Mathematics, Jilin University, Changchun,
China.  penghui17@mails.jlu.edu.cn.}
\and
 Qilong Zhai\thanks{(corresponding author)~School of Mathematics, Jilin University, Changchun,
    China.  zhaiql@jlu.edu.cn. The research of this author is
     supported in part by China National Natural Science Foundation(11901015). }
 \and Ran Zhang\thanks{School of Mathematics, Jilin University, Changchun,
    China.  zhangran@mail.jlu.edu.cn. The research of this author is
     supported in part by China National Natural Science Foundation (11971198, 91630201, 11871245, 11771179, 11826101), and by the Program for Cheung Kong Scholars(Q2016067), Key Laboratory of Symbolic Computation and Knowledge Engineering of Ministry of Education, Jilin University, Changchun, 130012, P.R.China.
     } \and
  Shangyou Zhang\thanks{Department of Mathematical Sciences,
     University of Delaware, Newark, DE 19716, U.S.A. szhang@udel.edu. }  }
\begin{document}

\maketitle

\begin{abstract}
In this paper, we propose a new numerical scheme for the coupled Stokes-Darcy model with Beavers-Joseph-Saffman interface condition. We use the weak Galerkin method to discretize the Stokes equation and the mixed finite element method to the Darcy equation. A discrete inf-sup condition is proved and optimal error estimates are also derived. Numerical experiments validate the theoretical analysis.
\end{abstract}

\begin{keywords} weak Galerkin finite element methods, mixed finite element methods,
 weak gradient,  coupled Stokes-Darcy problems
\end{keywords}

\begin{AMS}
Primary, 65N30, 65N15, 65N12; Secondary, 35B45, 35J50
\end{AMS}

\section{Introduction}
The coupling of fluid flow and porous media flow has received an increasing attention during the last decade. This coupled flow arises in many fields, such as the transport of contaminants through steams in environment, the filtration of flood through vessel walls in physiology, and some technologies involving fluid filter in industrial. Interested readers may refer to \cite{application3-first, application2, application4,application1} and the reference therein.

The mathematical model of such a coupled problem consists of Stokes equations in the fluid region and the Darcy's law in the porous medium.  Appropriate interface conditions, namely mass conservation, balance of force and the Beavers-Joseph-Saffman condition \cite{interface1, interface3, interface2} are imposed on the interface between the free flow region and porous medium flow region.

Early studies on numerical simulations and error analysis for the coupled Stokes-Darcy problem can be found in $\cite{early work1, early work2}$. In a comprehensive study presented in \cite{application3-first}, Discacciati et al. analyze a standard velocity-pressure formulation in the Stokes region and a second order primal elliptic problem in the Darcy region. Continuous finite element methods are used in both space. In $\cite{well-posedness}$, Layton et al. consider a mixed formulation in Darcy region, which involves the velocity and pressure simultaneously. They prove the existence and uniqueness of a weak solution to the mixed Stokes-Darcy system. Continuous finite element method employed in Stokes region and the mixed finite element method used in Darcy region. Later, the discontinuous Galerkin(DG) methods are applied to this problem \cite{half-DG,DG}. The work combines DG method for the Stokes equations with the mixed finite element method for the Darcy equation is proposed in $\cite{half-DG}$. Analysis of the DG method for both Stokes and Darcy equations introduced in $\cite{DG}$. In addition, preconditioning techniques  are also used for the coupled flow $\cite{pre1}$. More recent studies concerning the Stokes-Darcy problem can be found in $\cite{multi-grid1, boundary integral1, domain decomposition1, WG1, WG3, WG2, multi-grid2, time-stepping, boundary integral2, domain decomposition2, multi-grid3}$.

The weak Galerkin (WG) finite element method is proposed in $\cite{WG proposed}$ by Wang and Ye for the second order elliptic equation. {\color{black}They introduce} totally discontinuous weak functions and corresponding weak differential operators. Numerical implementation of WG methods for different models with more general finite element partitions is discussed in $\cite{WG-implement}$. The WG scheme is designed on arbitrary shape of polygons in 2D or polyhedra in 3D with certain shape regularity by introducing a stabilizer in $\cite{Wang2014a}$.
 Unified study for WG methods and other discontinuous Galerkin methods is presented in $\cite{uni1,uni2}$. In the past few years, the WG method is widely applied to many partial differential problems because of its flexibility and efficiency. The corresponding work can be found in $\cite{wg-application5, wg-application1, wg-application2, wg-application3, wg-application4}$.

Recently, WG methods are developed for solving the Stokes-Darcy model. In $\cite{WG2}$, the coupled system is described by Stokes equations in primal velocity-pressure formulation and the Darcy's law in primal pressure formulation. The piecewise constant elements are used to approximate the velocity, hydraulic and pressure. Furthermore, the same formulation is discussed in $\cite{WG3}$, different choices of WG finite element spaces are investigated, the classical meshes in $\cite{WG2}$ are extended to general polygonal meshes. In $\cite{WG1}$, the authors consider the mixed formulation in the Darcy region, both the Stokes region and Darcy region involve the velocity and the pressure. Strong coupling of the Stokes-Darcy system is achieved in the discrete space by using the WG approach.

As mentioned above, we can see that WG methods show a high flexibility for dealing with the Stokes-Darcy problem. However, the decoupling of the elements leads to an increase in the total degrees of freedom, which limits the practical utility of WG methods, especially in high order approximations. The aim of this article is to introduce a new numerical scheme with fewer number of degrees of freedom for the same mixed Stokes-Darcy formulation as $\cite{WG1}$. To this end, we use different finite element discretizations for the two regions. The WG method is still employed to approximate the velocity and the pressure in Stokes region. A summary for the features of WG methods to solve Stokes equation is provided in $\cite{Wang2015b}$. As for the Darcy region, the same unknowns are approximated by the mixed finite element (MFEM) method, which is different from the WG approximation in $\cite{WG1}$. Readers may refer to, e.g. $\cite{compare}$ for a comparison of degrees of freedom between WG methods and MFEM methods. Several standard mixed finite element spaces can be chosen, such as RT spaces $\cite{RT}$, BDM spaces $\cite{BDM}$, BDFM spaces $\cite{BDFM}$ and so on. The efficiency of the MFEM has been demonstrated in $\cite{MFE1, MFE2, MFE3}$.  Lagrange multiplier is introduced to impose the continuity of the velocity. The benefit of our approach is the possibility of combining the efficiency of the MFEM methods for Darcy problem with the flexibility of WG methods for Stokes problem. However, the combination of these two different finite element methods makes the proof process more complex for the inf-sup condition than $\cite{WG1}$. Inspired by the work in $\cite{half-DG}$, we construct two local projection operators in different region to prove it.

The rest of the paper is organized as follows. In the next section, we present the model problem, some notations and function spaces. In Section 3, we introduce weak Galerkin methods and construct WG-MFEM numerical scheme for the Stokes-Darcy problem. The well-posedness of the scheme is analyzed in Section 4. We derive the error estimates for the corresponding numerical approximations in Section 5. Finally, some numerical examples are presented to show the good performance of the developed algorithm in Section 6.

\section{Model Problem and Weak Formulation}
Let $\Omega$  be a bounded domain in $\mathbb{R}^2,$ subdivided into a free fluid region $\Omega_s$ and a porous region $\Omega_d$. Denote by $\Gamma=\partial \Omega_s\cap\partial \Omega_d$ the interface, and by $\Gamma_s=\partial \Omega_s\setminus \Gamma$, $\Gamma_d=\partial \Omega_d\setminus \Gamma$ the outer boundary. Moreover, let $\bn$ and $\bm{\tau}$ be the unit normal and tangential vectors to $\Gamma$, respectively, see Fig.\ref{domain-illustration}.

\begin{figure}[htb]\begin{center}\setlength\unitlength{1in}
    \begin{picture}(3,2.4)
 \put(0,0){\includegraphics[width=3.3in]{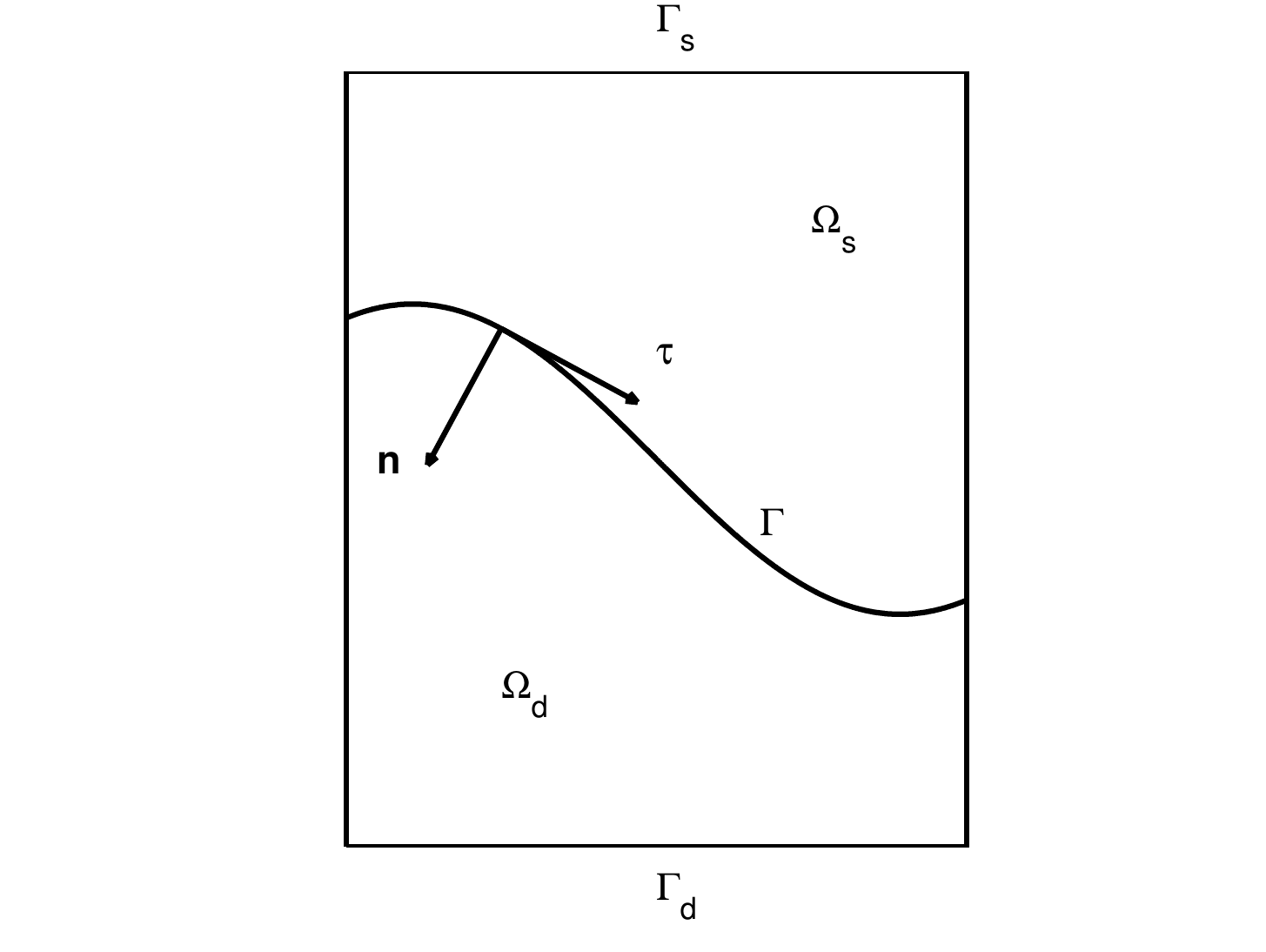}}
    \end{picture}
\caption{ {\color{black}Domain schematic for Stokes-Darcy coupled flow.}  } \label{domain-illustration}
\end{center}
\end{figure}

In $\Omega_s$, the fluid flow is governed by Stokes equations.
\begin{eqnarray}
\label{problem-eq1}
-\nabla\cdot\Tensor(\bu_s,p_s)&=&\mathbf{f}_s \quad \text{in }\Omega_s,
\\\label{problem-eq2}
\nabla\cdot \bu_s&=& 0 \quad \text{in }\Omega_s,
\\\label{problem-eq5}
\bu_s &=& 0 \quad \text{on }\Gamma_s,
\end{eqnarray}
where $\mathbb{T}$ is the stress tensor, $\mathbb{T}(\bu_s,p_s)=2\nu D(\bu_s)-p_s I$ and $ {D}(\bu_s)=\frac{1}{2}(\nabla \bu_s+\nabla^T\bu_s)$, $\nu$ is the kinematic viscosity of the fluid and $I$ is the identity matrix. $\mathbf{f}_s$ is a given external body force.

In $\Omega_d$, the porous media flow is governed by Darcy's law.
\begin{eqnarray}
\label{problem-eq3}
\nabla \cdot \bu_d&=&f_d ~~~~~~~~~~ \text{in}~\Omega_d,
\\\label{problem-eq4}
\bu_d&=&-\mathbb{K}\nabla p_d \quad \text{in} ~\Omega_d,
\\\label{problem-eq6}
\bu_d\cdot\bn_d&=&0~~~~~~~~~~~ \text{on }\Gamma_d,
\end{eqnarray}
where $\mathbb{K}$ is the symmetric positive-defined permeability tensor, $f_d$ is the source term and satisfies the following condition
\begin{eqnarray*}
\int_{\Omega_d}f_d=0.
\end{eqnarray*}

The interface conditions on $\Gamma$ consist of three parts.
\begin{eqnarray}\label{problem-eq7}
\bu_s\cdot\bn&=&\bu_d\cdot\bn ~~~~~~~~~~~ \text{on }\Gamma,
\\\label{problem-eq8}
-\Tensor(\bu_s,p_s)\bn\cdot\bn&=&p_d ~~~~~~~~~~~~~~~~ \text{on }\Gamma,
\\\label{problem-eq9}
-\Tensor(\bu_s,p_s)\bn\cdot\btau&=&\mu\mathbb{K}^{1/2}\bu_s\cdot\btau
 ~~~~ \text{on }\Gamma.
\end{eqnarray}
Condition $(\ref{problem-eq7})$ is the result of mass conservation across the interface, condition $(\ref{problem-eq8})$ represents the {\color{black}fact that} normal force on the interface is balance, and condition $(\ref{problem-eq9})$ is the Beavers-Joseph-Saffman interface condition, in which $\mu\geq 0$ is a parameter depending on the properties of the porous medium.

Next, we recall some notations for Sobolev space $\cite{Sobolev}$.
Let $K$ be a polygon in $\Real^2$, $H^m(K)$ stands for the Sobolev space. We denote by $\|\cdot\|_{m,K}$ and $|\cdot|_{m,K}$ the norm and semi-norm in $H^m(K)$, $m\geq 0$. When $m=0$, $H^0(K)$ coincides with $L^2(K)$ and we shall drop the subscript $K$ in the norm and semi-norm notations.

{\color{black}We define the space $H(\mbox{div};K)$ as follows.}
\begin{eqnarray*}
H(\mbox{div};K)=\{\bv:\bv\in [L^2(K)]^d, \nabla\cdot\bv\in L^2(K)\},
\end{eqnarray*}
with norm
\begin{eqnarray*}
\|\bv\|_{H(\mbox{div},K)}=(\|\bv||^2_K+\|\nabla\cdot\bv\|^2_K)^{\frac12}.
\end{eqnarray*}
We also define
\begin{eqnarray*}
L^2_0(K)=\{ q\in L^2(K):~\int_{K} ~q~dx=0\}.
\end{eqnarray*}
Then the function space for the velocity and the pressure are defined as
\begin{eqnarray*}
V:=\{\bv\in H(\mbox{div},\Omega), \bv|_{\Omega_s}\in H^1(\Omega_s),~\bv=\mathbf{0}~\mbox{on}~\Gamma_s,~\bv\cdot\bn_d=0~\mbox{on}~\Gamma_d\},
\end{eqnarray*}
and
\begin{eqnarray*}
M:&=&L^2_0(\Omega).
\end{eqnarray*}

{\color{black}Now we are ready to state} the weak formulation of the Stokes-Darcy problem $(\ref{problem-eq1})-(\ref{problem-eq9})$. Find $(\bu,p)\in V\times M$ such that
\begin{eqnarray}
\label{weak-form1}a(\bu,\bv)+b(\bv,p)&=&(\mathbf{f}_s,\bv)_{\Omega_s} \quad \forall~ \bv\in V,\\
\label{weak-form2}b(\bu,q)&=&(f_d,q)_{\Omega_d}  \quad \forall~ q\in M,
\end{eqnarray}
where
\begin{eqnarray*}
a(\bu,\bv)&=&2\nu(D(\bu),D(\bv))_{\Omega_s}+(\mathbb{K}^{-1}\bu,\bv)_{\Omega_d}+\mu\mathbb{K}^{\frac12}\langle \bu_s\cdot\btau,\bv_s\cdot\btau\rangle_{\Gamma},\\
b(\bv,q)&=&-(\nabla\cdot \bv,q)_{\Omega}.
\end{eqnarray*}

The existence and the uniqueness of the weak solutions have been proved in $\cite{well-posedness}$.

\section{Discretization}
In this section, we first introduce some basic definitions and preliminaries which will be used throughout the rest of this article. Then we construct numerical scheme for $(\ref{weak-form1})-(\ref{weak-form2})$.

\subsection{Notations for Partitions}
In {\color{black}what follows}, $\Omega_i$ refers to either $\Omega_s$ or $\Omega_d$,
and it is the same for the other symbols with subscript $i$.
Let $\T_{i,h}$ be the partition of $\Omega_i$. Denote by $\T_h$ the union of $\T_{s,h}$ and $\T_{d,h}$, where $\T_{s,h}$
is a WG-regular partition $\cite{Wang2014a}$ and $\T_{d,h}$ consists of triangles or rectangles. $T_s$ represents the element of $\T_{s,h}$ and $T_d$ represents the element of $\T_{d,h}$. Denote the edges in $\T_h$ by $\E_h$, and define $e_i$ the edges on $\partial T_i$. Let $\E^s_h$ be the set of all edges in $\T_h\cap(\Omega_s\cup\Gamma_s)$, and $\E^d_h$ be the set of edges in $\T_h\cap (\Omega_d\cup \Gamma_d)$. The set of all edges in $\T_h\cap \Gamma$ is denoted by $\Gamma_h$. Especially, the partition $\T_{s,h}$ and $T_{d,h}$ are not necessary to be consistent on the interface $\Gamma$. Denote the size of $T_i$ by $h_{T_i}$, the mesh size of $\T_{i,h}$ by $h_i$. In addition, denote by $\rho\in P_{k_i}(T_{i})$ that $\rho|_{T_{i}}$ is polynomial with degree no more than $k_i$.

{\color{black}To define the WG method, we first give a brief introduction of weak function on $T_s$,
\begin{eqnarray}
\bv_{s,h}=\left\{\begin{array}{ll} \bv_{s,0}, \quad & \text{in} \quad T_s,
\\
\bv_{s,b},  & \text{on} \quad \partial T_s. \end{array}\right.
\end{eqnarray}
The weak function is formed by the internal function $\bv_{s,0}$ and the boundary function $\bv_{s,b}$, where $\bv_{s,b}$ may not necessarily be related to the trace of $\bv_{s,0}$ on $\partial {T_s}.$ Note that $\bv_{s,b}$ takes single value on $e_s$. For convenience, we write $\bv_{s,h}$ as $\{\bv_{s,0},\bv_{s,b}\}$ in short. }

In Stokes region, we define the following WG space for the velocity variable.
\begin{eqnarray*}
V^s_h&=&\{ \bv_{s,h}=\{\bv_{s,0},\bv_{s,b}\}\in [L^2(\Omega_s)]^2\times [L^2(\E_h^s)]^2:~\bv_{s,0}|_{T_s}\in [P_{\alpha_s}(T_s)]^2 ~\text{for} ~T_s\in \T_{s,h},
\\&&\bv_{s,b}|_{e_s}\in [P_{\beta}(e_s)]^2 ~\text{for}~e_s\in \E^s_h\cup \Gamma_h,~\bv_{s,b}|_{e_s}=0~\text{for}~{e_s}\in \E^s_h\cap\Gamma_s\},
\end{eqnarray*}
and the finite element space for the pressure variable as
\begin{eqnarray*}
M^s_h&=&\{ q_{s,h}\in L^2_0(\Omega_s): ~q_{s,h}|_{T_s}\in P_{\gamma _s}(T),~T_s\in \T_{s,h}\}
\end{eqnarray*}
where non-negative integers $\alpha_s,~\beta$ and $\gamma_s$  satisfy
\begin{eqnarray*}
\beta-1&\leq&\gamma_s\leq\beta\leq\alpha_s\leq\beta+1,\\
\alpha_s&\leq& \gamma_s+1,\\
1&\leq&\beta.
\end{eqnarray*}
\begin{remark}
For $\alpha_s=1,~\beta=0,~\gamma_s=0$, the situation is more complicated. Interested readers may refer to $\cite{wg-application1, Stokes2.2}$ for details.
\end{remark}

Then, we give the mixed finite element spaces corresponding to the Darcy region. For the velocity variable
\begin{eqnarray*}
V^d_h&=&\{ \bv_d\in H(\mbox{div},\Omega_d):~\bv_d|_T\in P_{\alpha_d}(T_d)~\text{for}~{T_d}\in\T_{d,h},~\bv_d\cdot\bn=0~\text{for}~\E^d_h\cap\Gamma_d\},
\end{eqnarray*}
and for the pressure variable
\begin{eqnarray*}
M^d_h&=&\{q_{d,h}\in L^2_0(\Omega_d):~q_{d,h}|_{T_d}\in P_{\gamma_d}(T_d)~\text{for}~T_d\in\T_{d,h}\},
\end{eqnarray*}
where
\begin{eqnarray*}
\gamma_d&\leq& \alpha_d,\\
\alpha_d-1&\leq&\gamma_d.
\end{eqnarray*}
We assume that $\nabla\cdot V^d_h\subset M^d_h$.

In order to impose the continuity of the velocity on the interface, we introduce the discrete space for Lagrange multiplier.
\begin{eqnarray*}
\Lambda_h=V^d_h\cdot\bn.
\end{eqnarray*}
Now, we can define the global discrete velocity space $V_h$ and the discrete pressure space $M_h$ as follows.
\begin{eqnarray*}
V_h &=&\{\bv_h=(\bv_{s,h},\bv_{d,h})\in V^s_h\times V^d_h: \sum_{e\in\Gamma_h}\int_e\eta (\bv_{s,h}-\bv_{d,h})\cdot\bn=0,~\forall ~\eta\in \Lambda_h\},\\
M_h&=&M^s_h\times M^d_h.
\end{eqnarray*}
\subsection{Discrete Weak Operators}
Next, we introduce some weak differential operators for $\bv_{s,h}\in V^s_h$.
\begin{definition}
For any $\bv_{s,h}\in V^s_h$, $T_s\in \T_{s,h}$,
the discrete weak gradient $\nabla_w\bv_{s,h}|_{T_s}
\in [P_{{\beta}}(T_s)]^{d\times d}$ satisfies
\begin{eqnarray}\label{vector-wgradient}
(\nabla_w\bv_{s,h},\tau)_{T_s}&=&-(\bv_{s,0},\nabla\cdot\tau)_{T_s}+
\langle \bv_{s,b},\tau\cdot\bn\rangle_{\partial T_s},\quad
\forall \tau\in[P_{\beta}(T_s)]^{d\times d}.
\end{eqnarray}
\end{definition}
Analogously, we can define the discrete weak divergence.
\begin{definition}
For any $\bv_{s,h}\in V^s_h$, $T_s\in \T_{s,h}$,
the discrete weak gradient  $\nabla_w\bv_{s,h}|_{T_s}
\in P_{{\beta}}(T_s)$ satisfies
\begin{eqnarray}\label{wdivergence}
(\nabla_w\cdot\bv_{s,h},q_{s,h})_{T}&=&-(\bv_{s,0},\nabla q_{s,h})_{T_s}+
\langle \bv_{s,b},q_{s,h}\bn\rangle_{\partial T_s},\quad
\forall q_{s,h}\in P_{\beta}(T_s).
\end{eqnarray}
\end{definition}

Finally, denote by $D_w(\bv_{s,h})$ the weak strain tensor given by
\begin{eqnarray*}
D_w(\bv_{s,h})=\frac12(\nabla_w\bv_{s,h}+\nabla_w\bv_{s,h}^T).
\end{eqnarray*}

\subsection{Numerical Scheme}
Define $Q_{h}=\{Q_{0},Q_{b}\}$ the projection operator from $L^2(\Omega_s)$ onto $V^s_h$, where
$Q_{0}$ is the $L^2$ projection onto
$[P_{\alpha_s}(T_s)]^2$, $\forall~ T_s\in \T_{s,h},$
$Q_{b}$ is the $L^2$ projection onto
$[P_{\beta}(e_s)]^2$, $\forall~ e_s\in \E^s_h$.

We are now in a position to give a numerical scheme for the coupled Stokes-Darcy problem. To this end, we define some bilinear forms in the discrete spaces. For any $\bu_h=(\bu_{s,h}, \bu_{d,h})$, $\bv_h=(\bv_{s,h}, \bv_{d,h}) \in {V}_h$, $p_h=(p_{s,h}, p_{d,h})$ and $q_h=(q_{s,h}, q_{d,h}) \in M_h$, define
\begin{eqnarray*}
a_{s,h}(\bu_{s,h},\bv_{s,h})&=&\sum_{T_s\in\T_{s,h}}(2\nu D_w(\bu_{s,h}),D_w(\bv_{s,h}))_{T_s}+s(\bu_{s,h},\bv_{s,h}),\\
s(\bu_{s,h},\bv_{s,h})&=&\sum_{T_s\in\T_{s,h}}h^{-1}_{T_s}\langle Q_b\bu_{s,0}-\bu_{s,b},Q_b\bv_{s,0}-\bv_{s,b}\rangle_{\partial T_s}, \\
a_{i,h}&=&\langle \mu \mathbb{K}^{-\frac12}\bu_{s,b}\cdot\bm{\tau},\bv_{s,b}\cdot\bm{\tau}\rangle_{\Gamma_h},\\
b_{s,h}(\bv_{s,h},q_{s,h})&=&-(\nabla_w\cdot\bv_{s,h},q_{s,h})_{\Omega_s},\\
b_{d,h}(\bv_{d,h},q_{d,h})&=&-(\nabla\cdot\bv_{d,h},q_{d,h})_{\Omega_d},\\
a_h(\bu_h,\bv_h)&=&a_{s,h}(\bu_{s,h},\bv_{s,h})+a_{i,h}(\bu_h,\bv_h)+a_d(\bu_{d,h},\bv_{d,h}),\\
b_h(\bv_h,q_h)&=&b_{s,h}(\bv_{s,h},q_{s,h})+b_{d,h}(\bv_{d,h},q_{d,h}).
\end{eqnarray*}
With these preparations, we give the numerical
scheme as follows.
\begin{algorithm1}
{\color{black}Seek} $\bu_h\in V_h$, $p_{h}\in M_{h}$ such that
\begin{eqnarray}\label{alg1}
a_h(\bu_h,\bv_h)+b_h(\bv_h,p_{h})&=&
(\mathbf{f}_s,\bv_{h})_{\Omega_s},
\\\label{alg2}
b_h(\bu_h,q_h)&=&(f_d,q_h)_{\Omega_d},
\end{eqnarray}
for all $\bv_h
=(\bv_{s,h},\bv_{d,h})\in V_h,$ and $q_h\in M_{h}$.
\end{algorithm1}

\section{Existence and Uniqueness}\label{existence}
In this section, we prove two important properties of the numerical scheme:
the boundedness of $a_h(\cdot,\cdot)$ and the inf-sup condition of $b_h(\cdot,\cdot)$. The existence and uniqueness of the approximate solutions then follow from the two properties.

We first define a discrete norm on $V^s_h$ by
\begin{eqnarray*}
&&\|\bv_h\|_{{V}^s_h}^2\\\nonumber
&=&2\nu\|D_w(\bv_{s,h})\|^2_{\Omega_s}+\sum_{T_s\in\T_{s,h}}h^{-1}_s\|Q_b\bv_{s,0}-\bv_{s,b}\|^2_{\partial T_s}+\|\mu^{\frac12}\mathbb{K}^{-\frac{1}{4}}\bv_{s,b}\cdot\btau\|^2_{\Gamma_{h}}.
\end{eqnarray*}
It is obvious that $\|\cdot\|_{V^s_h}$ is a semi-norm. In order to demonstrate $\|\cdot\|_{V^s_h}$ is a well-defined norm on ${V}^s_h$, we introduce the following estimate.

\begin{lemma}\label{lemma-korn}
For any $\bv_{s,h}\in V^s_h$, we have
\begin{eqnarray*}
\sumTa\|\nabla\bv_{_{s,0}}\|_{T_s}\le C\|\bv_{_{s,h}}\|_{V^s_h}, \quad \forall ~T_s\in\mathcal{T}_{s,h}.
\end{eqnarray*}
\end{lemma}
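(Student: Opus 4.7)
The plan is to split $\nabla \bv_{s,0}$ on each element as
\begin{equation*}
\nabla \bv_{s,0} = (\nabla \bv_{s,0} - \nabla_w \bv_{s,h}) + \nabla_w \bv_{s,h},
\end{equation*}
control the first piece purely by the stabilizer part of $\|\bv_{s,h}\|_{V^s_h}$ through a local integration-by-parts identity, and control the second piece by a discrete Korn-type inequality that relates $\nabla_w \bv_{s,h}$ to the weak strain tensor $D_w(\bv_{s,h})$.

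For the local piece, I would exploit the assumption $\alpha_s \le \beta+1$, which guarantees that the classical gradient $\nabla \bv_{s,0}$ and the weak gradient $\nabla_w \bv_{s,h}$ both lie in $[P_\beta(T_s)]^{2\times 2}$. Ordinary integration by parts on $(\nabla \bv_{s,0},\tau)_{T_s}$, compared with the defining identity $(\ref{vector-wgradient})$ for $\nabla_w$, gives
\begin{equation*}
(\nabla \bv_{s,0} - \nabla_w \bv_{s,h}, \tau)_{T_s} = \langle \bv_{s,0} - \bv_{s,b}, \tau\cdot\bn \rangle_{\partial T_s}, \qquad \forall\, \tau \in [P_\beta(T_s)]^{2\times 2}.
\end{equation*}
Because $\tau|_{\partial T_s}$ is edgewise a polynomial of degree $\beta$, the definition of $Q_b$ lets me freely replace $\bv_{s,0}$ by $Q_b \bv_{s,0}$ on the right. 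Choosing $\tau = \nabla \bv_{s,0} - \nabla_w \bv_{s,h}$ and invoking Cauchy--Schwarz on $\partial T_s$ together with the polynomial inverse trace estimate $\|\tau\|_{\partial T_s} \le C h_{T_s}^{-1/2}\|\tau\|_{T_s}$ yields
\begin{equation*}
\|\nabla \bv_{s,0} - \nabla_w \bv_{s,h}\|_{T_s} \le C\, h_{T_s}^{-1/2}\, \|Q_b \bv_{s,0} - \bv_{s,b}\|_{\partial T_s},
\end{equation*}
whose square summed over $T_s\in\mathcal{T}_{s,h}$ is exactly the stabilizer $s(\bv_{s,h},\bv_{s,h})$, hence bounded by $\|\bv_{s,h}\|_{V^s_h}^2$.

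For the Korn-type piece I need a global estimate of the form
\begin{equation*}
\|\nabla_w \bv_{s,h}\|_{\Omega_s} \le C\Big( \|D_w(\bv_{s,h})\|_{\Omega_s} + \text{stabilizer} + \|\mu^{1/2}\mathbb{K}^{-1/4}\bv_{s,b}\cdot\btau\|_{\Gamma_h}\Big).
\end{equation*}
The route I would follow is the broken-Korn strategy of Brenner, adapted to weak functions: construct a smoothing/averaging map (Scott--Zhang type) $\Pi_h$ from $V^s_h$ into $[H^1(\Omega_s)]^2$ whose output $\tilde\bv$ inherits the boundary values $\bv_{s,b}$ on $\Gamma_s$ (where they vanish) and whose discrepancy with the piecewise function $\bv_{s,0}$ is controlled edgewise by $h^{-1/2}\|Q_b\bv_{s,0}-\bv_{s,b}\|_{\partial T_s}$. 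Classical Korn applied to $\tilde\bv$ gives $\|\nabla \tilde\bv\|_{\Omega_s} \le C\|D(\tilde\bv)\|_{\Omega_s}$; inverse estimates then transfer everything back to $\bv_{s,h}$ modulo stabilizer terms, and $D(\tilde\bv)$ is in turn controlled by $D_w(\bv_{s,h})$ plus stabilizer via the same integration-by-parts argument as above. This Korn step is the main obstacle: the homogeneous Dirichlet data $\bv_{s,b}=0$ on $\Gamma_s$ is only partial, and on the interface $\Gamma_h$ only the tangential component of $\bv_{s,b}$ is controlled (through the $\mu\mathbb{K}^{-1/4}$ term), so care is needed to verify that $\tilde\bv$ sees enough boundary information on the portion $\Gamma_s\subset\partial\Omega_s$ of positive measure to kill rigid-body motions and make classical Korn applicable.

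Combining the two steps, squaring, and summing gives $\sum_{T_s}\|\nabla\bv_{s,0}\|_{T_s}^2 \le C\|\bv_{s,h}\|_{V^s_h}^2$; Cauchy--Schwarz in the $\mathcal{T}_{s,h}$-index (or, more naturally, reading the claim in its $\ell^2$ form) then yields the stated inequality. The bulk of the work is the Korn estimate; the local identity and the combination step are short and standard.
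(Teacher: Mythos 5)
Your first step is fine and is in substance the same computation the paper makes (the paper runs the integration-by-parts comparison for $D(\bv_{s,0})$ against $D_w(\bv_{s,h})$ rather than for the full gradients, but the identity, the use of $\alpha_s\le\beta+1$, and the insertion of $Q_b$ are identical). The problem is your second step: the Korn-type bound $\|\nabla_w\bv_{s,h}\|_{\Omega_s}\le C(\|D_w(\bv_{s,h})\|_{\Omega_s}+\mbox{stabilizer}+\dots)$ is the entire content of the lemma, and you do not prove it --- you describe a route and yourself flag its weak point. That is a genuine gap, not a stylistic one, because the route as sketched has concrete obstructions. First, the $H^1$-error of an averaging/Scott--Zhang map applied to the discontinuous field $\bv_{s,0}$ is controlled by the \emph{full} edge jumps $h_e^{-1}\|[\bv_{s,0}]\|_e^2$, while the norm $\|\cdot\|_{V^s_h}$ only controls the projected quantity $\|Q_b\bv_{s,0}-\bv_{s,b}\|_{\partial T_s}$; when $\alpha_s=\beta+1$ the unprojected remainder satisfies only $h_e^{-1}\|(I-Q_b)[\bv_{s,0}]\|_e^2\le C\|\nabla\bv_{s,0}\|_{T_s}^2$ with no small constant, so it cannot be absorbed into the left-hand side and the argument circles back on itself. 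Second, the rigid-motion issue you raise is real but is resolved not on $\Gamma$ but on $\Gamma_s$, where $\bv_{s,b}=\mathbf{0}$; any construction must be made to see that data.

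The paper avoids all of this by invoking Brenner's piecewise Korn inequality \cite{Brenner2003} directly for $\bv_{s,0}$: that result already packages the jump contribution through the \emph{low-order} projection $\pi_e[\bv_{s,0}]$ (which, since $\beta\ge1$ and $\bv_{s,b}$ is single-valued, is bounded by $\|Q_b\bv_{s,0}-\bv_{s,b}\|_{\partial T_s}$, dodging the unprojected-jump problem above) together with a rigid-motion functional supported on $\Gamma_s$ (handled by $\int_{\Gamma_s}\bv_{s,0}\cdot\mathbf{m}=\int_{\Gamma_s}(Q_b\bv_{s,0}-\bv_{s,b})\cdot\mathbf{m}$). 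After that, only the term $\sumTa\|D(\bv_{s,0})\|_{T_s}^2$ needs the integration-by-parts comparison, which is your step one restricted to the symmetric part. To repair your proof, either cite the piecewise Korn inequality as the paper does, or actually construct the smoothing operator and prove the transfer estimates; as written, the crucial inequality is asserted rather than established. (A minor shared issue: both you and the paper state the conclusion as a sum of element norms rather than in the $\ell^2$ form that the argument actually delivers; the $\ell^2$ reading is the correct one.)
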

\begin{proof}
From $\cite{Brenner2003}$, we know the following discrete Korn's inequality holds.
\begin{eqnarray*}
&&\sumTa\|\nabla\bv_{_{s,0}}\|^2_{T_s}\\
&&\le C\left(\sumTa \|D(\bv_{_{s,0}})\|^2_{T_s}
+\sup_{\substack{\mathbf{m}\in RM,\|\mathbf{m}\|_{\Gamma_s}=1 \\ \int_{\Gamma_s}\mathbf{m} ds=\mathbf{0}}}
\left(\int_{\Gamma_s}\bv_{_{s,0}}\cdot\mathbf{m} ds\right)^2+\sum_{e_s\in\E_{_{s,h}\setminus \Gamma_s}}\|\pi_e[\bv_{_{s,0}}]\|_{e_s}^2\right),
\end{eqnarray*}
where $RM$ is the space of rigid motions, $\pi_{e_s}$ is the $L^2$ projection operator
onto $[P_1(e_s)]^d$, $[\cdot]$ denotes the jump on edges. %We estimate the three terms
Each term on the left hand of the inequality can be handled as follows.

Using the integration by parts and the definition of $\nabla_w$
on each element $T_s\in \T_{s,h}$, we have that
\begin{eqnarray*}
(D(\bv_{{s,0}}),D(\bv_{{s,0}}))_{T_s}
&=&(-\bv_{{s,0}},\nabla\cdot D(\bv_{{s,0}}))_{T_s}+\langle\bv_{{s,0}},D(\bv_{{s,0}})\cdot\bn
\rangle_{\partial T_s}
\\
&=&(-\bv_{{s,0}},\nabla\cdot D(\bv_{{s,0}}))_{T_s}+\langle\bv_{{s,b}},D(\bv_{{s,0}})\cdot\bn
\rangle_{\partial T_s}\\
&&+\langle\bv_{{s,0}}-\bv_{{s,b}},D(\bv_{{s,0}})\cdot\bn
\rangle_{\partial T_s}
\\
&=&(\nabla_w\bv_{s,h},D(\bv_{{s,0}}))_{T_s}+\langle Q_b\bv_{{s,0}}-\bv_{{s,b}},D(\bv_{{s,0}})\cdot\bn
\rangle_{\partial T_s}\\
&=&(D_w\bv_{s,h},D(\bv_{{s,0}}))_{T_s}+\langle Q_b\bv_{{s,0}}-\bv_{{s,b}},D(\bv_{{s,0}})\cdot\bn
\rangle_{\partial T_s}.
\end{eqnarray*}
{\color{black}Summing} over all element $T_s\in \mathcal{T}_{s,h}$ and {\color{black}applying} the trace inequality $(\ref{trace-thm})$,
the inverse inequality $(\ref{inverse-thm})$,
{\color{black}we obtain}
\begin{eqnarray*}
\|D(\bv_{{s,0}})\|^2_{T_s}&\le& C(\|D_w\bv_{s,h}\|_{T_s}\|D(\bv_{{s,0}})\|_{T_s}
+\|Q_b\bv_{{s,0}}-\bv_{{s,b}}\|_{\partial T_s}\|D(\bv_{{s,0}})\|_{\partial T_s})
\\
&\le& C(\|D_w\bv_{{s,h}}\|_{T_s}
+h_{T_s}^{-\frac12}\|Q_b\bv_{{s,0}}-\bv_{{s,b}}\|_{\partial T_s})  \|D(\bv_{{s,0}})\|_{T_s}.
\end{eqnarray*}
Therefore,
\begin{eqnarray*}
\sumTa \|D(\bv_{s,0})\|_{T_s}\le C\|\bv_{{s,h}}\|_{V^s_h}.
\end{eqnarray*}

For the second and the third terms, since $\beta\ge 1$ and $\bv_{s,b}=\mathbf{0}$ on $\Gamma_s$, %\ref{problem-ep5}%
we have
\begin{eqnarray*}
\sup_{\substack{\mathbf{m}\in RM,\|\mathbf{m}\|_{\Gamma_s}=1 \\ \int_{\Gamma_s}\mathbf{m} ds=\mathbf{0}}}
\left(\int_{\Gamma_s}\bv_{{s,0}}\cdot\mathbf{m} ds\right)
&=&\sup_{\substack{\mathbf{m}\in RM,\|\mathbf{m}\|_{\Gamma_s}=1 \\ \int_{\Gamma_s}\mathbf{m} ds=\mathbf{0}}}
\left(\int_{\Gamma_s}(Q_b\bv_{{s,0}}-\bv_{{s,b}})\cdot\mathbf{m} ds\right)
\\
&\le&C\|\bv_{_{s,h}}\|_{V^s_h},
\end{eqnarray*}
and
\begin{eqnarray*}
\sum_{e_s\in\E_{s,h}^0}\|\pi_e[\bv_{{s,0}}]\|_{e_s}
\le\sum_{e_s\in\E_{s,h}^0}\|Q_b[\bv_{{s,0}}]\|_{e_s}
\le\sumTa\|Q_b\bv_{{s,0}}-\bv_{{s,b}}\|_{\partial T_s}
&\le&C\|\bv_{{s,h}}\|_{V^s_h}.
\end{eqnarray*}
The proof is completed.
\end{proof}

\begin{lemma}
$\| \cdot\|_{V^s_h}$ provides a norm in $V^s_h$.
\end{lemma}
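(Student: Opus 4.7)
The plan is to use the previous Lemma (lemma-korn) together with the vanishing of each summand in $\|\cdot\|_{V^s_h}$ to force $\bv_{s,h}=\b0$.

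First I would observe that $\|\cdot\|_{V^s_h}$ is already shown to be a semi-norm (it is a sum of $L^2$-norms of linear images of $\bv_{s,h}$). So only positive definiteness remains: I must show that if $\|\bv_{s,h}\|_{V^s_h}=0$ then both $\bv_{s,0}\equiv\b0$ on every $T_s\in\T_{s,h}$ and $\bv_{s,b}\equiv\b0$ on every $e_s\in\E_h^s\cup\Gamma_h$. Assuming the norm is zero, three facts come for free:
\begin{enumerate}
\item $D_w(\bv_{s,h})=0$ on each $T_s$, so by Lemma \ref{lemma-korn}, $\nabla\bv_{s,0}=\b0$ on each $T_s$, i.e.\ $\bv_{s,0}$ is a \emph{constant vector} on each element;
\item $Q_b\bv_{s,0}-\bv_{s,b}=\b0$ on $\partial T_s$ for every $T_s$;
\item $\bv_{s,b}\cdot\btau=\b0$ on $\Gamma_h$ (assuming $\mu>0$; if $\mu=0$ this term is absent but the argument below will not need it anyway).
\end{enumerate}

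Next I would propagate constancy across edges. Since $\bv_{s,0}|_{T_s}$ equals a constant $\bc_{T_s}$, the projection is trivial: $Q_b\bv_{s,0}|_{e_s}=\bc_{T_s}$ on every edge $e_s\subset\partial T_s$. Combining with (ii), $\bv_{s,b}|_{e_s}=\bc_{T_s}$. For any interior edge $e_s$ shared by two elements $T_s^1,T_s^2$, $\bv_{s,b}$ takes a single value there, so $\bc_{T_s^1}=\bc_{T_s^2}$. By connectedness of $\Omega_s$ along the mesh $\T_{s,h}$, all the constants $\bc_{T_s}$ coincide with a single constant $\bc$, and hence $\bv_{s,0}\equiv\bc$ on $\Omega_s$ and $\bv_{s,b}\equiv\bc$ on every edge.

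Finally, I invoke the boundary condition built into $V^s_h$: $\bv_{s,b}|_{e_s}=\b0$ for $e_s\in\E_h^s\cap\Gamma_s$. Since there is at least one such edge and $\bv_{s,b}$ is the same constant $\bc$ on all edges, $\bc=\b0$. Therefore $\bv_{s,0}\equiv\b0$ and $\bv_{s,b}\equiv\b0$, so $\bv_{s,h}=\b0$, which establishes positive definiteness and completes the proof. The only potentially subtle step is (i)---passing from $D_w(\bv_{s,h})=0$ to $\nabla\bv_{s,0}=\b0$---but this is exactly what Lemma \ref{lemma-korn} delivers, so the argument is routine once that lemma is in hand.
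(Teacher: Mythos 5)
Your proof is correct and follows essentially the same route as the paper's: use Lemma \ref{lemma-korn} to get $\nabla\bv_{s,0}=\mathbf{0}$ elementwise, propagate the resulting constants across edges via $Q_b\bv_{s,0}=\bv_{s,b}$ and the single-valuedness of $\bv_{s,b}$, and kill the global constant with the boundary condition on $\Gamma_s$. You merely spell out the connectedness/single-valuedness step that the paper leaves implicit.
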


\begin{proof}
It suffices to check the positivity property of the semi-norm $\|\cdot\|_{V^s_h}$. To this end, assume that $\|\bv_{s,h}\|_{V^s_h}=0$ for some $\bv_{s,h}\in V^s_h$. Then we obtain $D_w(\bv_{s,h})=0$ on all $T_s\in\mathcal{T}_{s,h}$, $Q_{b}\bv_{s,0}=\bv_{s,b}$
on $\partial T_s$, $\bv_{s,b}\cdot{\bm\tau}=0$ on $\Gamma$. From the Lemma $\ref{lemma-korn}$, we have $\nabla \bv_{s,0}=0$ on all $T_s$,
which implies that $\bv_{s,0}=constant$ on every $T_s$. Moreover, $Q_b\bv_{s,0}=
\bv_{s,b}$ yields $\bv_{s,h}$ is a constant in $\Omega_s$. Combining with the fact that $\bv_{s,b}=0$ on $\Gamma_s$, we know that $\bv_{s,h}=0$.
\end{proof}

Now, we can define a discrete norm on $V_h$.
\begin{eqnarray}\label{def-tripbar}
\|\bv_h\|^2_{V_h}=\|\bv_{s,h}\|^2_{V^s_h}+\|\bv_{d,h}\|_{\Omega_d}^2+\|\nabla\cdot\bv_{d,h}\|_{\Omega_d}^2.
\end{eqnarray}

It follows from the definition of  norm (\ref{def-tripbar}) and the Cauchy Schwarz inequality that coercivity and boundedness hold true for the bilinear form $a_h(\cdot,\cdot)$.
\begin{lemma}
For any $\bu_h,\bv_h\in V_h$, we have
\begin{eqnarray}
\label{coe}a_h(\bv_h,\bv_h)&=&\|\bv_h\|^2_{V_h}, \qquad \forall \bv_h\in V_h, ~\nabla\cdot\bv_{d,h}=0,\\
\label{bound}|a_h(\bu_h,\bv_h)|&\le& C\|\bu_h\|_{V_h}\cdot\|\bv_h\|_{V_h}, \qquad\forall \bu_h,~\bv_h\in V_h.
\end{eqnarray}
\end{lemma}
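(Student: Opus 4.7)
Both statements reduce to term-by-term inspection of the four pieces making up $a_h$, namely $a_{s,h}$, the stabilizer $s$, the interface form $a_{i,h}$, and the Darcy form $a_d$. No delicate estimate (e.g.\ the Korn-type bound of Lemma \ref{lemma-korn}) is needed here; the essence is Cauchy--Schwarz and arithmetic matching with the definition (\ref{def-tripbar}) of $\|\cdot\|_{V_h}$.

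For coercivity (\ref{coe}), I would first substitute $\bv_h$ into $a_h$ and collect
\[
a_h(\bv_h,\bv_h)
= 2\nu\|D_w(\bv_{s,h})\|_{\Omega_s}^2
+ \sum_{T_s\in\T_{s,h}} h_{T_s}^{-1}\|Q_b\bv_{s,0}-\bv_{s,b}\|_{\partial T_s}^2
+ \|\mu^{1/2}\mathbb{K}^{-1/4}\bv_{s,b}\cdot\btau\|_{\Gamma_h}^2
+ (\mathbb{K}^{-1}\bv_{d,h},\bv_{d,h})_{\Omega_d}.
\]
The first three summands are exactly the three pieces of $\|\bv_{s,h}\|_{V^s_h}^2$. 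The Darcy summand matches $\|\bv_{d,h}\|_{\Omega_d}^2$ (up to the fixed symmetric positive-definite weight $\mathbb{K}^{-1}$), while the hypothesis $\nabla\cdot\bv_{d,h}=0$ kills the remaining $\|\nabla\cdot\bv_{d,h}\|_{\Omega_d}^2$ in (\ref{def-tripbar}). Summing yields (\ref{coe}).

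For boundedness (\ref{bound}) I would apply Cauchy--Schwarz to each of the four forms separately. The Stokes volume term gives $2\nu(D_w\bu_{s,h},D_w\bv_{s,h})_{\Omega_s}\le 2\nu\|D_w\bu_{s,h}\|_{\Omega_s}\|D_w\bv_{s,h}\|_{\Omega_s}$, controlled by the first piece of the $V^s_h$-norm on each factor. The stabilizer and the interface form are handled in the same pattern, each giving the product of the corresponding pieces of $\|\bu_{s,h}\|_{V^s_h}$ and $\|\bv_{s,h}\|_{V^s_h}$. For $a_d$, Cauchy--Schwarz together with the spectral bounds on $\mathbb{K}^{-1}$ yield
\[
|(\mathbb{K}^{-1}\bu_{d,h},\bv_{d,h})_{\Omega_d}|
\le C\|\bu_{d,h}\|_{\Omega_d}\|\bv_{d,h}\|_{\Omega_d}
\le C\|\bu_h\|_{V_h}\|\bv_h\|_{V_h}.
\]
Summing the four bounds and using $\sum ab\le(\sum a^2)^{1/2}(\sum b^2)^{1/2}$ produces (\ref{bound}).

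There is no real obstacle; the only points that require a little care are keeping track of the constants depending on $\nu$, $\mu$, and the spectral bounds of $\mathbb{K}$, and observing that the divergence piece of the $V_h$-norm plays no role here because the coercivity statement is restricted to divergence-free discrete Darcy velocities while the boundedness bound simply discards it after Cauchy--Schwarz on the other terms.
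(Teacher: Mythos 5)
Your proof is correct and follows exactly the route the paper intends: the paper gives no written proof beyond the remark that the lemma ``follows from the definition of the norm (\ref{def-tripbar}) and the Cauchy--Schwarz inequality,'' and your term-by-term matching of $a_{s,h}$, $s$, $a_{i,h}$, and $a_d$ against the pieces of $\|\cdot\|_{V_h}$ is precisely that argument spelled out. Your parenthetical caveat that the Darcy term equals $\|\bv_{d,h}\|_{\Omega_d}^2$ only up to the weight $\mathbb{K}^{-1}$ is in fact more careful than the paper's stated equality in (\ref{coe}), which is literal only when $\mathbb{K}=I$.
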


Besides the projection $Q_h=\{Q_0,~Q_b\}$ defined in the previous section, we need another local $L^2$ projections, for each element $T_s\in \T_{s,h}$, denote by $\mathbf{Q}_h$ the $L^2$ projection onto $[P_{\beta}(T_s)]^{2\times 2}$ and by $\mathbb{Q}_h$ the $L^2$ projection onto $P_{\beta}(T_s)$.

\begin{lemma}
The projection operators defined above satisfy
\begin{eqnarray}
\label{comm1}\nabla_w(Q_h\bv)&=&\mathbf{Q}_h(\nabla \bv) \qquad  \forall ~\bv\in~[H^1(\Omega_s)]^d,\\
\label{comm2}\nabla_w\cdot(Q_h\bv)&=&\mathbb{Q}_h(\nabla\cdot\bv) \qquad  \forall ~\bv\in~H(\mbox{div},\Omega_s).
\end{eqnarray}
\end{lemma}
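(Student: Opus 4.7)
The plan is to verify both identities testwise against polynomial spaces, exploiting the definitions of the weak operators together with integration by parts, and then using the projection property of $Q_0$, $Q_b$, $\mathbf{Q}_h$, $\mathbb{Q}_h$ to absorb or eliminate projection operators as needed.

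First I would tackle \eqref{comm1}. Fix $T_s\in\T_{s,h}$ and an arbitrary test tensor $\tau\in[P_\beta(T_s)]^{d\times d}$. Starting from the definition (\ref{vector-wgradient}) applied to $Q_h\bv=\{Q_0\bv,Q_b\bv\}$, one has
\[
(\nabla_w(Q_h\bv),\tau)_{T_s}
=-(Q_0\bv,\nabla\!\cdot\!\tau)_{T_s}
+\langle Q_b\bv,\tau\!\cdot\!\bn\rangle_{\partial T_s}.
\]
Since $\nabla\!\cdot\!\tau\in[P_{\beta-1}(T_s)]^d$ and $\beta-1\le\alpha_s$, the $L^2$-orthogonality of $Q_0$ gives $(Q_0\bv,\nabla\!\cdot\!\tau)_{T_s}=(\bv,\nabla\!\cdot\!\tau)_{T_s}$; similarly, $\tau\!\cdot\!\bn|_{e_s}\in[P_\beta(e_s)]^d$, so the orthogonality of $Q_b$ gives $\langle Q_b\bv,\tau\!\cdot\!\bn\rangle_{\partial T_s}=\langle \bv,\tau\!\cdot\!\bn\rangle_{\partial T_s}$. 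Combining these and integrating by parts (legitimate because $\bv\in[H^1(\Omega_s)]^d$), we obtain
\[
(\nabla_w(Q_h\bv),\tau)_{T_s}
=-(\bv,\nabla\!\cdot\!\tau)_{T_s}+\langle\bv,\tau\!\cdot\!\bn\rangle_{\partial T_s}
=(\nabla\bv,\tau)_{T_s}
=(\mathbf{Q}_h\nabla\bv,\tau)_{T_s},
\]
where the last equality uses $\tau\in[P_\beta(T_s)]^{d\times d}$ and the definition of $\mathbf{Q}_h$. Since $\nabla_w(Q_h\bv)\in[P_\beta(T_s)]^{d\times d}$ and $\tau$ was arbitrary in that same space, \eqref{comm1} follows.

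Identity \eqref{comm2} is handled by the same template, only with the scalar weak divergence. For $q_{s,h}\in P_\beta(T_s)$, the definition (\ref{wdivergence}) yields
\[
(\nabla_w\!\cdot\!(Q_h\bv),q_{s,h})_{T_s}
=-(Q_0\bv,\nabla q_{s,h})_{T_s}+\langle Q_b\bv,q_{s,h}\bn\rangle_{\partial T_s}.
\]
Again $\nabla q_{s,h}\in[P_{\beta-1}(T_s)]^d$ and $q_{s,h}\bn|_{e_s}\in[P_\beta(e_s)]^d$, so the projection properties of $Q_0$ and $Q_b$ let us strip off the projections. After integration by parts—valid because $\bv\in H(\mathrm{div};\Omega_s)$ and the pairing $\langle\bv\!\cdot\!\bn,q_{s,h}\rangle_{\partial T_s}$ makes sense through the normal trace—the right-hand side collapses to $(\nabla\!\cdot\!\bv,q_{s,h})_{T_s}=(\mathbb{Q}_h(\nabla\!\cdot\!\bv),q_{s,h})_{T_s}$ by definition of $\mathbb{Q}_h$.

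The only subtle point, and the one I would be most careful about, is checking that the degree inequalities among $\alpha_s$, $\beta$, $\gamma_s$ stated just before the remark really do give $\nabla\!\cdot\!\tau$ and $\nabla q_{s,h}$ a degree that $Q_0$ fixes, and that $\tau\!\cdot\!\bn$ and $q_{s,h}\bn$ lie in the range of $Q_b$ on each edge; these containments are what allow the projection symbols to vanish. Once this bookkeeping is in place, the proof reduces to a one-line integration by parts on each $T_s$.
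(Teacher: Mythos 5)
Your proof is correct and follows essentially the standard argument; the paper itself does not prove this lemma but defers to \cite{Wang2015b}, where the same computation---apply the definition of the weak operator to $Q_h\bv$, strip the projections $Q_0$ and $Q_b$ by $L^2$-orthogonality against $\nabla\cdot\tau$ (resp.\ $\nabla q_{s,h}$) and $\tau\cdot\bn$ (resp.\ $q_{s,h}\bn$), integrate by parts, and identify the result with $\mathbf{Q}_h$ (resp.\ $\mathbb{Q}_h$) applied to the classical derivative---is carried out. The degree bookkeeping you flag at the end does check out here, since $\deg(\nabla\cdot\tau)\le\beta-1\le\alpha_s$ and $\tau\cdot\bn|_{e_s}\in[P_\beta(e_s)]^d$ lies exactly in the range of $Q_b$, so the projections vanish as you claim.
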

The proof of this Lemma can be found in $\cite{Wang2015b}$.

 As for Darcy region, denote the velocity space $V|_{\Omega_d}$ by $V^d$.
 Then we define the MFEM interpolant $\Pi^d_h:~V^d\cap [H^{\theta}(\Omega_d)]^2\rightarrow V^d_h$ with $\theta>0$ satisfying $\cite{MFE2}$, for any $\bv_d\in V^d\cap (H^{\theta}[\Omega_d)]^2,$
\begin{eqnarray}
\label{MFE-pro1}(\nabla\cdot\Pi^d_h\bv_d-\bv_d,q_{d,h})&=&0, \qquad \forall q_{d,h}\in M^d_h,\\
\label{MFE-pro2}\int_e((\Pi^d_h\bv_d-\bv_d)\cdot\bn_e)\bw_d\cdot\bn_e~ds&=&0, \qquad \forall e\in \Gamma^d_h\cup\Gamma_h,~\forall ~\bw_{d,h}\in V^d_h.
\end{eqnarray}

In addition, we denote by $R_h^s$ the $L^2$ projection onto $M^s_h$, and by $R^d_h$ the $L^2$ projection onto $M^d_h$.

Next, we introduce the discrete inf-sup condition for the bilinear form $b_h(\cdot,\cdot)$.
\begin{lemma}\label{inf-sup}(inf-sup) There exists a positive
constant $C$ independent of $h$ such that
\begin{eqnarray*}
\sup_{\bv_h\in V_h}\frac{b_h(\bv_h,q_h)}{\|\bv_h\|_{V_h}}
&\ge&C\|q_h\|_{M_h}
\end{eqnarray*}
for all $q_h\in M_h$.
\end{lemma}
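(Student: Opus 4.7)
The plan is to establish the discrete inf-sup by constructing a Fortin-type operator $\Pi : V \to V_h$ that reproduces the continuous divergence pairing and is uniformly bounded, then invoking the continuous inf-sup condition inherited from the well-posedness result of \cite{well-posedness}. Given $q_h=(q_{s,h},q_{d,h})\in M_h$, observe that $\int_{\Omega}q_h=\int_{\Omega_s}q_{s,h}+\int_{\Omega_d}q_{d,h}=0$, so the continuous inf-sup produces a $\bv\in V$ with $-\nabla\cdot\bv=q_h$ in $\Omega$, together with the bound $\|\bv\|_V\le C\|q_h\|_{\Omega}$ and enough regularity ($\bv|_{\Omega_s}\in [H^1(\Omega_s)]^2$, $\bv|_{\Omega_d}\in V^d\cap [H^{\theta}(\Omega_d)]^2$) to apply both interpolants.

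Next I would define the region-wise interpolant by $\Pi\bv|_{\Omega_s}:=Q_h\bv=\{Q_0\bv,Q_b\bv\}$ and $\Pi\bv|_{\Omega_d}:=\Pi^d_h\bv$, then verify $\Pi\bv\in V_h$. The Stokes component lies in $V^s_h$ automatically because $\bv=\mathbf{0}$ on $\Gamma_s$ forces $Q_b\bv=\mathbf{0}$ there; the Darcy component lies in $V^d_h$ by (\ref{MFE-pro2}) applied to the boundary edges. The nontrivial part is the interface coupling: for every $\eta\in\Lambda_h$ we must have $\sum_{e\in\Gamma_h}\int_e\eta(Q_b\bv-\Pi^d_h\bv)\cdot\bn=0$. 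On the Darcy side, (\ref{MFE-pro2}) yields $\int_e\eta\,\Pi^d_h\bv\cdot\bn=\int_e\eta\,\bv\cdot\bn$ on each Darcy interface edge; on the Stokes side, the definition of $Q_b$ as the $L^2$-projection onto $[P_\beta(e_s)]^2$ together with the inclusion of the $\Lambda_h$-traces into that polynomial space (after edge-wise subdivision against the two non-matching grids on $\Gamma$) gives $\int_e\eta\,Q_b\bv\cdot\bn=\int_e\eta\,\bv\cdot\bn$. Both sides therefore equal the same continuous moment $\int_e\eta\,\bv\cdot\bn$, and the interface constraint holds. This mesh-matching step, taken from \cite{half-DG}, is the real obstacle because $\T_{s,h}$ and $\T_{d,h}$ need not be consistent on $\Gamma$, so the argument hinges on $\Lambda_h|_{e}$ sitting inside the range of the Stokes-side trace projection.

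With $\Pi\bv\in V_h$ in hand, the divergence-matching identity follows directly from the commutativities already proved. In $\Omega_s$, (\ref{comm2}) together with $\gamma_s\le\beta$ gives $b_{s,h}(Q_h\bv,q_{s,h})=-(\mathbb{Q}_h\nabla\cdot\bv,q_{s,h})=-(\nabla\cdot\bv,q_{s,h})$; in $\Omega_d$, (\ref{MFE-pro1}) gives $b_{d,h}(\Pi^d_h\bv,q_{d,h})=-(\nabla\cdot\bv,q_{d,h})$. Summing,
\begin{equation*}
b_h(\Pi\bv,q_h)=-(\nabla\cdot\bv,q_h)_{\Omega}=(q_h,q_h)_{\Omega}=\|q_h\|_{\Omega}^2.
\end{equation*}
For the stability bound $\|\Pi\bv\|_{V_h}\le C\|\bv\|_V$, I would estimate the two pieces separately. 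The weak-strain term is controlled via $D_w(Q_h\bv)=\mathbf{Q}_h D(\bv)$ (the tensor form of (\ref{comm1})); the stabilizer $h_{T_s}^{-1}\|Q_b Q_0\bv-Q_b\bv\|_{\partial T_s}^2$ is bounded by standard trace and approximation inequalities yielding $C\|\bv\|_{1,T_s}^2$; the interface term $\|\mu^{1/2}\mathbb{K}^{-1/4}Q_b\bv\cdot\btau\|_{\Gamma_h}$ is controlled by the $H^1(\Omega_s)$ trace. On the Darcy side, the standard MFEM stability $\|\Pi^d_h\bv\|_{H(\mathrm{div},\Omega_d)}\le C\|\bv\|_{H(\mathrm{div},\Omega_d)\cap H^{\theta}(\Omega_d)}$ finishes the bound.

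Combining, for the particular choice $\bv_h=\Pi\bv$,
\begin{equation*}
\sup_{\bv_h\in V_h}\frac{b_h(\bv_h,q_h)}{\|\bv_h\|_{V_h}}\;\ge\;\frac{\|q_h\|_{\Omega}^{2}}{C\|\bv\|_V}\;\ge\;C\|q_h\|_{M_h},
\end{equation*}
which is the claim. I expect the bulk of the writing to be the interface-compatibility verification in Step 2; the divergence identity and the norm bound are essentially bookkeeping once the commutativities and the two interpolation properties are in place.
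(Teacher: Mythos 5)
Your overall architecture (continuous inf-sup from \cite{MR0365287}, a Fortin-type operator built from $Q_h$ on the Stokes side and the MFEM interpolant on the Darcy side, commutativity for the divergence identity, then uniform boundedness) matches the paper's. But there is a genuine gap at exactly the step you flag as ``the real obstacle,'' and the fix you propose does not work. You take $\Pi\bv|_{\Omega_d}:=\Pi^d_h\bv$ and claim the interface constraint holds because $\int_e\eta\,Q_b\bv\cdot\bn=\int_e\eta\,\bv\cdot\bn$ for all $\eta\in\Lambda_h$. This identity fails in general: $\Lambda_h=V^d_h\cdot\bn$ consists of polynomials of degree $\alpha_d$ on the \emph{Darcy}-side edges, while $Q_b$ is the $L^2$ projection onto $[P_\beta(e_s)]^2$ on the \emph{Stokes}-side edges. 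Since the paper explicitly allows $\T_{s,h}$ and $\T_{d,h}$ to be non-matching on $\Gamma$, the restriction of $\eta$ to a Stokes edge $e_s$ is only a piecewise polynomial with breaks at interior Darcy mesh points, hence not in the test space $[P_\beta(e_s)]^2$ against which $Q_b$ is defined; ``edge-wise subdivision'' does not help because $Q_b$ projects onto polynomials on the whole edge $e_s$, not on the subdivided pieces. Even on matching grids one would additionally need $\alpha_d\le\beta$, which is not among the assumptions. So the moment-matching you rely on is unavailable, and $\Pi\bv$ as you define it need not lie in $V_h$.

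The paper's proof resolves precisely this difficulty by \emph{modifying the Darcy-side argument rather than the Stokes-side trace}: it solves the auxiliary Neumann problem $\Delta\phi=0$ in $\Omega_d$, $\nabla\phi\cdot\bn=0$ on $\Gamma_d$, $\nabla\phi\cdot\bn=(Q_h\bv-\bv)\cdot\bn$ on $\Gamma$ (compatible because $Q_b$ preserves edge averages of $\bv\cdot\bn$), sets $\mathbf{z}=\nabla\phi$ and $\bw=\bv+\mathbf{z}$, and defines $\pi^d_h\bv:=\Pi^d_h\bw$. Then $\nabla\cdot\bw=\nabla\cdot\bv$ in $\Omega_d$, so the divergence identity survives, while $\bw\cdot\bn=Q_b\bv\cdot\bn$ holds \emph{as functions} on $\Gamma$, so the edge-moment property (\ref{MFE-pro2}) of $\Pi^d_h$ gives $\int_e\eta\,\Pi^d_h\bw\cdot\bn=\int_e\eta\,Q_b\bv\cdot\bn$ for all $\eta\in\Lambda_h$ with no assumption on mesh matching or polynomial degrees. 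The price is the elliptic-regularity bound $\|\mathbf{z}\|_{\theta,\Omega_d}\le C\|(Q_h\bv-\bv)\cdot\bn\|_{\theta-1/2,\Gamma}$ needed to keep $\pi^d_h$ uniformly bounded. You should insert this harmonic correction; the rest of your outline (the commutativity-based divergence identity and the stability estimates) then goes through as you describe.
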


\begin{proof}
According to $\cite{MR0365287}$, we know that for any $q_h\in M_h$,
there exists {\color{blue}a} $\bv\in [H_0^1(\Omega)]^2$ such that
\begin{eqnarray*}
\nabla\cdot\bv=-q_h \qquad in~\Omega,
\end{eqnarray*}
and
 $\|\bv\|_{1,\Omega}\le C\|q_h\|_{0,\Omega}$.

 Note that
 \begin{eqnarray*}
b_{s,h}(Q_h\bv,q_h)&=&-(\nabla_w\cdot Q_h\bv,q_h)_{\Omega_s}=-(\mathbb{Q}_h(\nabla\cdot \bv),q_h)_{\Omega_s}\\
&=&-(\nabla\cdot\bv,q_h)_{\Omega_s}=\|q_h\|^2_{\Omega_s},
\end{eqnarray*}
and
\begin{eqnarray*}
b_{d,h}(\bv,q_h)=-(\nabla\cdot\bv,q_h)_{\Omega_d}=\|q_h\|^2_{\Omega_d}.
\end{eqnarray*}
Next, we construct an projection operator $\pi_h:  ~(V\cap [H^1(\Omega)]^2)\rightarrow V_h$ such that
\begin{eqnarray*}
b_{s,h}(\pi_h\bv-Q_h\bv,q_h)=0,\qquad b_{d,h}(\pi_h\bv-\bv,q_h)=0,\quad \forall q_h\in M_h.
\end{eqnarray*}
Let $\pi_h\bv=(\pi^s_h\bv,\pi^d_h\bv)\in ~V^s_h\times V^d_h$. First, we take $\pi^s_h\bv=Q_h\bv$. {\color{black}It} is obvious that $b_{s,h}(\pi_h\bv-Q_h\bv,q_h)=0$. {\color{black}In addition}, the following estimate holds.
\begin{eqnarray*}
\|Q_h\bv_s\|_{V^s_h}\leq C\|\bv_s\|_{1,\Omega_s}
\end{eqnarray*}
Readers may refer to $\cite{WG1}$ for the proof for this estimate. Next, we need to define the operator $\pi^d_h\bv$. Consider the following auxiliary problem
\begin{eqnarray*}
\nabla\cdot\nabla \phi&=&0\qquad \mbox{in}~\Omega_d,\\
\nabla\phi\cdot\bn&=&0 \qquad \mbox{on} ~\Gamma_d,\\
\nabla\phi\cdot\bn&=&(\pi^s_h\bv-\bv)\cdot\bn\qquad \mbox{on}~\Gamma.
\end{eqnarray*}
It follows from the definition of the projection operator $Q_h$ that
\begin{eqnarray*}
\int_{\Gamma}(\pi^s_h\bv-\bv)\cdot\bn~ds=\int_{\Gamma}(Q_b\bv-\bv)\cdot\bn~ds=0.
\end{eqnarray*}
So the auxiliary problem is well-posed.
Let $\mathbf{z}=\nabla \phi$, we notice that the function $\pi^s_h\bv\cdot\bn\in H^{\theta}(\Gamma)$ for any $0\leq\theta\leq\frac12$.
By elliptic regularity $\cite{ell-reg}$,
\begin{eqnarray}\label{ell-err}
\label{regularity}\|\mathbf{z}\|_{\theta,\Omega_d}\leq C\|\pi^s_h \bv-\bv\|_{\theta-\frac12,\Gamma}\qquad 0\leq\theta\leq \frac12.
\end{eqnarray}
Let $\bw=\bv+\mathbf{z}$. Then we have
\begin{eqnarray}
\label{construct1}\nabla\cdot\bw&=&\nabla \cdot(\bv+\mathbf{z})=\nabla \cdot\bv \qquad ~~~~\mbox{in}~ \Omega_d ,\\
\label{construct2}\bw\cdot\bn&=&\bv\cdot\bn+\mathbf{z}\cdot\bn=\pi^s_h\bv\cdot\bn\qquad \mbox{on}~ \Gamma.
\end{eqnarray}
Define $\pi^d_h\bv:=\Pi^d_h\bw$. From the definition of $\Pi^d_h$, we know that
\begin{eqnarray*}
b_{d,h}(\pi^d_h\bv,q_{d,h})&=&b_{d,h}(\Pi^d_h\bw,q_{d,h})=b_{d,h}(\bw,q_{d,h})\\
&=&-(\nabla\cdot\bw,q_{d,h})=-(\nabla\cdot\bv,q_{d,h})=b_{d,h}(\bv,q_{d,h}), \qquad \forall ~q_{d,h}\in M^d_h.
\end{eqnarray*}
So the interpolant operator $\pi^d_h$ satisfies $b_{d,h}(\pi^d_h\bv-\bv,q_{d,h})=0$.

Next, we prove that $\pi_h\bv\in V_h$. For any $e\in \Gamma_h$ and $\eta\in \Lambda_h$, using $(\ref{MFE-pro2}),~(\ref{construct1})$ and $(\ref{construct2})$, we have
\begin{eqnarray*}
&&\int_e\pi^d_h\bv\cdot\bn\eta~ds=\int_e\Pi^d_h\bw\cdot\bn\eta~ds\\
&=&\int_e\bw\cdot\bn\eta~ds=\int_e\pi^s_h\bv\cdot\bn\eta~ds.
\end{eqnarray*}
It remains to give the bound of the operator $\pi^d_h$. From Lemma $(\ref{MFE-pro})$ and $(\ref{regularity})$
\begin{eqnarray*}
\|\pi^d_h\bv\|_{V^d_h}&=&\|\Pi^d_h\bw\|_{V^d_h}\\
&\leq& \|\Pi^d_h\bv\|_{V^d_h}+\|\Pi^d_h\mathbf{z}\|_{V^d_h}\\
&\leq& C(\|\bv\|_{1,\Omega_d}+\|\mathbf{z}\|_{\theta,\Omega_d})\\
&\leq& C(\|\bv\|_{1,\Omega_d}+\|(\pi^s_h\bv-\bv)\cdot\bn\|_{\Gamma}).
\end{eqnarray*}
Using the trace inequality $(\ref{trace-thm})$ and the projection inequality $(\ref{pro-est4})$, we have
\begin{eqnarray*}
\|(\pi^s_h\bv-\bv)\cdot\bn\|_e&\leq& \|Q_0\bv-\bv\|_e\\
&\leq& Ch^{-\frac12}\|Q_0\bv-\bv\|_{T_s}+Ch^{\frac12}\|\nabla(Q_0\bv-\bv)\|_{T_s}\\
&\leq& Ch^{\frac12}\|\bv\|_{1,T_s}.
\end{eqnarray*}
Thus, we obtain $\|\pi^d_h\bv\|_{V^d_h}\leq ~C\|\bv\|_{1,\Omega}$.
Furthermore,
\begin{eqnarray*}
\|\pi_h\bv\|_{V_h}\leq C\|\bv\|_{1,\Omega}.
\end{eqnarray*}
{\color{black}Combining} with the above estimates, we get
\begin{eqnarray*}
\frac{b_h(\pi_h\bv,q_h)}{\|\pi_h\bv\|_{V_h}}&=&\frac{b_{s,h}(Q_h\bv,q_h)+b_{d,h}(\Pi^d_h\bv,q_h)}{\|\pi_h\bv\|_{V_h}}\\
&\geq& C\frac{b_{s,h}(Q_h\bv,q_h)+b_{d,h}(\Pi^d_h\bv,q_h)}{\|\pi_h\bv\|_{1,\Omega}}\\
&\geq& C\frac{(\nabla\cdot\bv,q_h)+b_{d,h}(\Pi^d_h\bv,q_h)}{\|\pi_h\bv\|_{1,\Omega}}\\
&\geq& C\frac{\|q_h\|^2_{\Omega}}{\|\bv\|_{1,\Omega}}\\
&\geq& C\|q_h\|_{\Omega},
\end{eqnarray*}
which completes the proof.
\end{proof}
\begin{lemma}\label{projection}
For $\bv\in [H^1(\Omega)]^2,$ such that $\bv|_{\Omega_d}\in[H^{\gamma_d+2}(\Omega_i)]^2$, there exists $\tilde{\bv}_{h}\in V_h$ such that
\begin{eqnarray}
\label{interpolant3-pro} b_{d,h}(\bv-\tilde{\bv},q_{d,h})&=&0, \qquad \forall ~q_{d,h}\in M_h,\\
\label{interpolant3-err}\qquad\qquad\|\bv-\tilde{\bv}\|_{V^d_h}\leq& C& (h_d^{\alpha_d+1}|\bv|_{\alpha_d+1,\Omega_d}+h_d^{\gamma_d+1}|\nabla\cdot\bv|_{\gamma_d+1,\Omega_d}+h^{\alpha_s+\frac12}_s\|\bv\|_{\alpha_s+1,\Omega_s}).
\end{eqnarray}
\end{lemma}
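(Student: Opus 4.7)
The plan is to mirror the interface-correction construction used in the proof of Lemma \ref{inf-sup}, pairing the WG projection $Q_h$ in $\Omega_s$ with a corrected mixed interpolant $\Pi_h^d$ in $\Omega_d$. Specifically, I would first set $\tilde{\bv}_{s,h} := Q_h \bv \in V^s_h$. To make the resulting pair lie in $V_h$, I would then fix the interface trace mismatch by solving the auxiliary Neumann problem $-\Delta \phi = 0$ in $\Omega_d$, $\nabla\phi\cdot\bn = 0$ on $\Gamma_d$, and $\nabla\phi\cdot\bn = (Q_h\bv-\bv)\cdot\bn$ on $\Gamma$, which is well-posed because $\int_\Gamma(Q_b\bv-\bv)\cdot\bn\,ds=0$ by the definition of $Q_b$. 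With $\mathbf{z}:=\nabla\phi$ and $\bw:=\bv+\mathbf{z}$, I would define the Darcy component by $\tilde{\bv}_{d,h} := \Pi_h^d \bw$.

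Next I would verify the three required properties in turn. Membership $\tilde{\bv}_h \in V_h$ follows from property (\ref{MFE-pro2}) together with $\bw\cdot\bn = \tilde{\bv}_{s,h}\cdot\bn$ on $\Gamma$: for every $e\in\Gamma_h$ and every $\eta\in\Lambda_h$, $\int_e \tilde{\bv}_{d,h}\cdot\bn\,\eta\,ds = \int_e \bw\cdot\bn\,\eta\,ds = \int_e \tilde{\bv}_{s,h}\cdot\bn\,\eta\,ds$. Condition (\ref{interpolant3-pro}) holds because $\nabla\cdot\mathbf{z}=0$ gives $\nabla\cdot\bw=\nabla\cdot\bv$, so (\ref{MFE-pro1}) yields $(\nabla\cdot(\bv - \Pi_h^d\bw),q_{d,h})_{\Omega_d}=0$.

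For the error bound (\ref{interpolant3-err}) I would split
\[
\bv - \tilde{\bv}_{d,h} = (\bv - \Pi_h^d \bv) - \Pi_h^d \mathbf{z}.
\]
The first difference is handled by standard mixed-method approximation theory: its $L^2$ part contributes $h_d^{\alpha_d+1}|\bv|_{\alpha_d+1,\Omega_d}$, and its divergence, which equals $(I - R_h^d)\nabla\cdot\bv$ by the commuting relation, contributes $h_d^{\gamma_d+1}|\nabla\cdot\bv|_{\gamma_d+1,\Omega_d}$. For the correction $\Pi_h^d \mathbf{z}$, the identity $\nabla\cdot\mathbf{z}=0$ combined with (\ref{MFE-pro1}) gives $\nabla\cdot\Pi_h^d\mathbf{z}=0$, so only the $L^2$ part matters. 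Using stability of $\Pi_h^d$ on $[H^\theta(\Omega_d)]^2$ together with the elliptic regularity estimate (\ref{regularity}) reduces this to bounding $\|(Q_h\bv-\bv)\cdot\bn\|_{\theta-1/2,\Gamma}$ for a suitable $\theta \in (0,1/2]$, which is then estimated element-by-element on the Stokes side via the trace inequality (\ref{trace-thm}) and the projection estimate (\ref{pro-est4}).

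The step I expect to be the main obstacle is upgrading the interface estimate from the leading $h_s^{1/2}$ bound used in the inf-sup proof to the sharper $h_s^{\alpha_s+1/2}\|\bv\|_{\alpha_s+1,\Omega_s}$ required here. The idea is to subtract an optimal local polynomial from $\bv$ on each $T_s$ adjacent to $\Gamma$ before applying the trace inequality, so as to expose the full approximation order of $Q_0$ on $[P_{\alpha_s}(T_s)]^2$; this must be carried out carefully because $\T_{s,h}$ and $\T_{d,h}$ need not match on $\Gamma$, and summing edge contributions requires relating the Stokes-side element traces to the $H^{\theta-1/2}(\Gamma)$ norm appearing in the Darcy elliptic-regularity bound without losing the gained powers of $h_s$.
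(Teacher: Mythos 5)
Your proposal is correct and follows essentially the same route as the paper: the paper's proof simply recalls the interpolant $\pi^d_h\bv=\Pi^d_h\bw$ with $\bw=\bv+\nabla\phi$ built in the inf-sup lemma, splits $\bv-\Pi^d_h\bw=(\bv-\Pi^d_h\bv)-\Pi^d_h\mathbf{z}$ exactly as you do, and bounds the correction by $\|\mathbf{z}\|_{\theta,\Omega_d}\le C\|(Q_h\bv-\bv)\cdot\bn\|_{0,\Gamma}$. The step you flag as the main obstacle is in fact immediate: since $\theta-\tfrac12\le 0$ the interface norm is dominated by the $L^2(\Gamma)$ norm (so the mesh mismatch on $\Gamma$ is harmless), and applying the trace inequality $(\ref{trace-thm})$ to $Q_0\bv-\bv$ on each boundary element together with the optimal projection estimate $(\ref{pro-est1})$ under the assumed $H^{\alpha_s+1}$ regularity already yields $h_s^{-1/2}\cdot h_s^{\alpha_s+1}=h_s^{\alpha_s+1/2}$, with no need to subtract an additional local polynomial.
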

\begin{proof}
Recall the interpolant $\pi^d_h\bv$ constructed in Lemma $\ref{inf-sup}$, then $(\ref{interpolant3-pro})$ can be deduced directly.
We only need to prove $(\ref{interpolant3-err})$. From the definition of $\pi^d_h$, we know
\begin{eqnarray}\label{ine-8}
\|\bv-\pi^d_h\bv\|_{V^d_h}=\|\bv-\Pi^d_h\bw\|_{V^d_h}\leq\|\bv-\Pi^d_h\bv\|_{V^d_h}+\|\Pi^d_h(\bw-\bv)\|_{V^d_h}.
\end{eqnarray}
Using Lemma $(\ref{MFE-pro})$, the first term on the right-hand side of $(\ref{ine-8})$ can be estimated as follows
\begin{eqnarray*}
\|\bv-\Pi^d_h\bv\|_{V^d_h}\leq C(h_d^{\alpha_d+1}|\bv|_{\alpha_d+1,\Omega_d}+h_d^{\gamma_d+1}|\nabla\cdot\bv|_{\gamma_d+1,\Omega_d}).
\end{eqnarray*}
For the second term, using estimate $(\ref{ell-err})$ and $(\ref{pro-est1})$,
\begin{eqnarray*}
\|\Pi^d_h(\bw-\bv)\|_{V^d_h}&=&\|\Pi^d_h\mathbf{z}\|_{V^d_h}\leq\|\mathbf{z}\|_{\theta,\Omega_d}\\
&\leq& \|(\pi^s_h\bv-\bv)\cdot\bn\|_{0,\Gamma}\leq Ch^{\alpha_s+1/2}_s\|\bv\|_{\alpha_s+1,\Omega_s}.
\end{eqnarray*}
Combining the estimates above we complete the proof.
\end{proof}

\begin{lemma}
The numercial scheme $(\ref{alg1})-
(\ref{alg2})$ has a {\color{black}unique} solution.
\end{lemma}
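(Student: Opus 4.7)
The plan is to exploit the fact that the scheme is a square linear system on the finite-dimensional space $V_h\times M_h$: it suffices to show that the homogeneous problem ($\mathbf{f}_s=0$, $f_d=0$) admits only the trivial solution, which yields existence and uniqueness simultaneously.

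First I would take $\bv_h=\bu_h$ in (\ref{alg1}) and $q_h=p_h$ in (\ref{alg2}), then subtract the two equations to get $a_h(\bu_h,\bu_h)=0$. Next I would use (\ref{alg2}) with arbitrary $q_h\in M_h$; since $\nabla_w\cdot \bu_{s,h}\in M^s_h$ and $\nabla\cdot \bu_{d,h}\in M^d_h$ (the latter by the inclusion $\nabla\cdot V^d_h\subset M^d_h$ assumed in the construction), testing against these two functions forces $\nabla_w\cdot \bu_{s,h}=0$ and $\nabla\cdot\bu_{d,h}=0$. With the Darcy divergence vanishing, the coercivity identity (\ref{coe}) applies and gives $\|\bu_h\|_{V_h}^2=a_h(\bu_h,\bu_h)=0$. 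By the definition of the norm (\ref{def-tripbar}) and the fact that $\|\cdot\|_{V^s_h}$ is a bona fide norm on $V^s_h$ (established in the preceding lemma), this forces $\bu_h=0$.

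Substituting $\bu_h=0$ back into (\ref{alg1}) yields $b_h(\bv_h,p_h)=0$ for every $\bv_h\in V_h$. Now the inf-sup condition of Lemma \ref{inf-sup} gives
\begin{equation*}
C\|p_h\|_{M_h}\le \sup_{\bv_h\in V_h}\frac{b_h(\bv_h,p_h)}{\|\bv_h\|_{V_h}}=0,
\end{equation*}
so $p_h=0$. Hence the kernel of the associated linear operator is trivial, and because the system is square, this is equivalent to existence and uniqueness of a solution for every right-hand side.

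The main obstacle, which has already been dealt with in the previous lemmas, is to know that $\|\cdot\|_{V^s_h}$ is actually a norm (not merely a semi-norm) on $V^s_h$, so that $\|\bu_h\|_{V_h}=0$ really does imply $\bu_h=0$, and to have the inf-sup condition in hand to conclude on the pressure. Once those two ingredients are available, the argument above is essentially a bookkeeping exercise, and no further analytic work is required.
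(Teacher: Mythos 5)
Your proposal is correct and follows essentially the same route as the paper: set the data to zero, test with $(\bu_h,p_h)$ to obtain $a_h(\bu_h,\bu_h)=0$, conclude $\bu_h=0$, and then invoke the inf-sup condition of Lemma \ref{inf-sup} to get $p_h=0$. One small caution about your intermediate step: $\nabla\cdot\bu_{d,h}$ need not lie in $M^d_h$ because of the zero-mean constraint built into that space, but this detour is unnecessary anyway, since $a_h(\bu_h,\bu_h)=0$ already contains the term $(\mathbb{K}^{-1}\bu_{d,h},\bu_{d,h})_{\Omega_d}$ and hence forces $\bu_{d,h}=0$ (and thus $\nabla\cdot\bu_{d,h}=0$) directly.
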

\begin{proof}
Since the problem is finite dimensional, it suffices to show that the solution is unique. Set $\mathbf{f}_s=\mathbf{0},~f_d=0.$ Then take $\bv_h=\bu_h$ and $q_h=p_h$, we have
\begin{eqnarray*}
a_h(\bu_h,\bu_h)&=&0,
\end{eqnarray*}
and
\begin{eqnarray*}
b_h(\bu_h,q_h)&=&0 \qquad \forall ~q_h\in ~M_h.
\end{eqnarray*}
Combining with the results above, we know that $a_h(\bu_h,\bu_h)=0$, which implies that $\bu_h=0$. Furthermore, we derive that
\begin{eqnarray*}
b(\bv_h,p_h)=0 \qquad \forall ~\bv_h\in V_h.
\end{eqnarray*}
From the inf-sup condition we know $p_h=0.$
\end{proof}

\section{Error Estimates}
In this section, we derive the optimal error estimates for the velocity in the energy norm and the pressure in the $L^2$ norm.

\begin{lemma}\label{pre-erreqn}
For any $\bw_s\in [H^1(\Omega_s)]^2,$ $\rho_s\in H^1(\Omega_s)$, and
 $\bv_{s,h}\in V_h^s$ , it
follows that
\begin{eqnarray}
&&\label{imp-equ1}(D_w(Q_h\bw_s), D_w(\bv_{s,h}))_{\Omega_s}\\\nonumber
&=&(D(\bw_s),D(\bv_{s,0}))
_{\Omega_s}-\sumTa
\langle \bv_{s,0}-\bv_{s,b}, \bQ_h D(\bw_{s})\cdot\bn\rangle_{\partial T_s},
\\
&&\label{imp-equ2}(\nabla_w\cdot\bv_{s,h}, R_h \rho_s)_{\Omega_s}\\\nonumber
&=&(\nabla\cdot\bv_{s,0},\rho_s)_{\Omega_s}
-\sumTa \langle \bv_{s,0}-\bv_{s,b}, (R_h\rho_s)\cdot\bn\rangle_{\partial T_s}.
\end{eqnarray}
\begin{proof}
According to the commutative property $(\ref{comm1})$, we know that $D_w(Q_h\bu_s)=\bQ_hD(\bu_s)$ is symmetric. Thus,
\begin{eqnarray*}
(D_w(Q_h\bw_s), D_w\bv_{s,h})_{T_s}=(\bQ_h D(\bw_s), D_w\bv_{s,h})_{T_s}=(\bQ_h D(\bw_s), \nabla_w\bv_{s,h})_{T_s}.
\end{eqnarray*}
It follows from the definition of weak gradient (\ref{vector-wgradient}) and the integration by parts, we have
\begin{eqnarray*}
&&\sum_{T_s\in\T_{s,h}}(D_w(Q_h\bw_s), \nabla_w\bv_{s,h})_{T_s}\\
&=&\sum_{T_s\in\T_{s,h}}(-(\nabla\cdot(\bQ_h D(\bw_s)),\bv_{s,0})_{T_s}+\langle \bv_{s,b},\bQ_h D(\bw_s)\bn\rangle_{\partial T_s})\\
&=&\sum_{T_s\in\T_{s,h}}((\bQ_h D(\bw_s),\nabla\bv_{s,0})_{T_s}-\langle \bv_{s,0}-\bv_{s,b},\bQ_h D(\bw_s)\bn\rangle_{\partial T_s})\\
&=&\sum_{T_s\in\T_{s,h}}((\bQ_h D(\bw_s),D(\bv_{s,0}))_{T_s}-\langle \bv_{s,0}-\bv_{s,b},\bQ_h D(\bw_s)\bn\rangle_{\partial T_s}).
\end{eqnarray*}
The proof of $(\ref{imp-equ2})$ is similar, so we omit details here.
\end{proof}
\end{lemma}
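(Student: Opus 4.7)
The plan is to prove both identities with the same template: apply the definition of the relevant discrete weak operator, perform an element-wise integration by parts to expose a genuine classical derivative of $\bv_{s,0}$, and then exploit $L^2$-projection invariance to strip $\bQ_h$ (or $R_h$) off the data whenever the remaining factor already lies in the projection's target space. The admissibility conditions $\alpha_s\le\beta+1$, $\alpha_s\le\gamma_s+1$, and $\gamma_s\le\beta$ built into the spaces $V_h^s$ and $M_h^s$ will supply exactly the polynomial inclusions this stripping step needs.

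For identity (\ref{imp-equ1}), I would first use the commutativity property (\ref{comm1}) to rewrite $D_w(Q_h\bw_s)$ as $\bQ_h D(\bw_s)$, which is symmetric; this symmetry lets me replace $D_w(\bv_{s,h})$ by the full $\nabla_w\bv_{s,h}$ in the pairing. I then substitute $\tau=\bQ_h D(\bw_s)\in[P_\beta(T_s)]^{d\times d}$ into the defining identity (\ref{vector-wgradient}) of $\nabla_w$, and integrate $-(\bv_{s,0},\nabla\cdot\tau)_{T_s}$ by parts element-wise, producing $(\nabla\bv_{s,0},\tau)_{T_s}-\langle\bv_{s,0},\tau\cdot\bn\rangle_{\partial T_s}$. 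The new boundary piece combines with the $\bv_{s,b}$ boundary term already present to form the jump $\bv_{s,0}-\bv_{s,b}$. In the remaining volume integral, the symmetry of $\tau$ allows $\nabla\bv_{s,0}$ to be replaced by $D(\bv_{s,0})$, and the inclusion $D(\bv_{s,0})\in[P_{\alpha_s-1}(T_s)]^{d\times d}\subseteq[P_\beta(T_s)]^{d\times d}$ (using $\alpha_s\le\beta+1$) lets me drop $\bQ_h$ from $D(\bw_s)$.

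Identity (\ref{imp-equ2}) follows the same recipe more directly. Because $\gamma_s\le\beta$, the scalar $R_h\rho_s$ lies in $P_\beta(T_s)$ and is therefore admissible as a test function in the weak divergence definition (\ref{wdivergence}). Integration by parts on $-(\bv_{s,0},\nabla(R_h\rho_s))_{T_s}$ yields $(\nabla\cdot\bv_{s,0},R_h\rho_s)_{T_s}-\langle\bv_{s,0},R_h\rho_s\,\bn\rangle_{\partial T_s}$, and then $\nabla\cdot\bv_{s,0}\in P_{\alpha_s-1}(T_s)\subseteq P_{\gamma_s}(T_s)$ (by $\alpha_s\le\gamma_s+1$) lets me remove $R_h$ from $\rho_s$. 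The two boundary pieces again combine into the jump $\bv_{s,0}-\bv_{s,b}$.

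No step is genuinely delicate: the argument is local, purely algebraic, and entirely driven by the definitions of $\nabla_w$, $\nabla_w\cdot$, and the projections, together with the polynomial-degree constraints on the discrete spaces. The main thing to guard against is a slip in the index inventory, so I would write out each polynomial degree explicitly before invoking any projection identity; I do not anticipate any substantive obstacle beyond this bookkeeping.
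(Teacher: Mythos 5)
Your proposal is correct and follows essentially the same route as the paper's proof: commutativity $(\ref{comm1})$ plus symmetry of $\bQ_h D(\bw_s)$ to pass from $D_w$ to $\nabla_w$, the defining identity $(\ref{vector-wgradient})$ with $\tau=\bQ_h D(\bw_s)$, element-wise integration by parts to form the jump term, and symmetry again to replace $\nabla\bv_{s,0}$ by $D(\bv_{s,0})$. You are in fact slightly more careful than the paper, which leaves implicit both the final projection-stripping step (via $\alpha_s\le\beta+1$ and $\alpha_s\le\gamma_s+1$) and the details of $(\ref{imp-equ2})$.
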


{\color{black}With the above lemma}, we can establish the error equations.
\begin{lemma}
Let $(\bu,p)$ be the solutions of $(\ref{problem-eq1})-(\ref{problem-eq9})$, and $(\bu_h,p_h)$ be the solutions of $(\ref{alg1})-
(\ref{alg2})$, we have
\begin{eqnarray}
\label{err-equation1}&&a_{s,h}(Q_h\bu_s-\bu_{s,h},\bv_{s,h})+a_{i,h}(Q_h\bu_s-\bu_{s,h},\bv_{s,h})+b_{s,h}(\bv_{s,h},R^s_hp_s-p_{s,h})\\\nonumber
&=&l_1(\bu_s,\bv_{s,h})-l_2(p_s,\bv_{s,h})-l_3(\bu_s,\bv_{s,h})-\langle p_d,\bv_{s,b}\cdot\bn\rangle_{\Gamma_h}+s(Q_h\bu_s,\bv_{s,h}),\\
\label{err-equation2}&&a_d(\bu_d-\bu_{d,h},\bv_{d,h})+b_d(\bv_{d,h},p_d-p_{d,h})=\langle p_d,\bv_{d,h}\cdot\bn\rangle_{\Gamma_h},\\
\label{err-equation3}&&b(Q_h\bu_s-\bu_{s,h},q_{s,h})=0,\\
\label{err-equation4}&&b(\bu_d-\bu_{d,h},q_{d,h})=0
\end{eqnarray}
for any $\bv\in V_h$ and $q_h\in M_h$,
where
\begin{eqnarray*}
l_1(\bu_s,\bv_{s,h})&=&\sum_{T_s\in\T_{s,h}}\langle 2\nu(\bv_{s,0}-\bv_{s,b}),
D(\bu_s)\cdot\bn-(\bQ_h D(\bu_s))\cdot\bn\rangle_{\partial T_s}\\
l_2(p_s,\bv_{s,h})&=&\sum_{T_s\in\T_{s,h}}\langle \bv_{s,0}-\bv_{s,b}, (p_s-R^s_h p_s)\bn\rangle_{\partial T_s}\\
l_3(\bu_s,\bv_{s,h})&=&\sum_{e\in\Gamma_h}\langle \mu\mathbb{K}^{-\frac12}(\bu_s-Q_b\bu_s)\cdot\btau,\bv_{s,b}\cdot\btau\rangle_e.
\end{eqnarray*}
\end{lemma}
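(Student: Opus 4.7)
All four identities are consistency statements: plug the projected true solution into the discrete bilinear forms and compare with the scheme $(\ref{alg1})$--$(\ref{alg2})$. On the continuous side the idea is to integrate each strong-form PDE by parts element-by-element, use the interface conditions $(\ref{problem-eq7})$--$(\ref{problem-eq9})$ on $\Gamma$, and then invoke the commuting identities $(\ref{imp-equ1})$--$(\ref{imp-equ2})$ from the preceding lemma to exchange classical derivatives of $\bu_s,p_s$ for discrete weak derivatives acting on $Q_h\bu_s,R_h^s p_s$. Subtracting the corresponding rows of the scheme then cancels the data terms $(\bf_s,\bv_{s,0})$ and $(f_d,q_{d,h})$ and leaves precisely the residuals $l_1,l_2,l_3$, the stabilizer $s(Q_h\bu_s,\bv_{s,h})$, and the interface pressure contributions.

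\textbf{Stokes momentum $(\ref{err-equation1})$.} Testing $-\nabla\cdot\Tensor(\bu_s,p_s)=\bf_s$ with $\bv_{s,0}$ on $T_s$ and integrating by parts gives
\[
(\bf_s,\bv_{s,0})_{T_s} = 2\nu(D(\bu_s),D(\bv_{s,0}))_{T_s}-(p_s,\nabla\cdot\bv_{s,0})_{T_s}-\langle\Tensor(\bu_s,p_s)\bn,\bv_{s,0}\rangle_{\partial T_s}.
\]
After summation over $\mathcal{T}_{s,h}$ I split each boundary integral as $\langle\Tensor\bn,\bv_{s,0}\rangle=\langle\Tensor\bn,\bv_{s,0}-\bv_{s,b}\rangle+\langle\Tensor\bn,\bv_{s,b}\rangle$; the second piece cancels on interior edges (continuity of $\Tensor\bn$, single-valuedness of $\bv_{s,b}$) and on $\Gamma_s$ (where $\bv_{s,b}=\mathbf 0$), and on $\Gamma$ collapses to $-\langle p_d,\bv_{s,b}\cdot\bn\rangle_{\Gamma_h}-\mu\langle\mathbb K^{-1/2}\bu_s\cdot\btau,\bv_{s,b}\cdot\btau\rangle_{\Gamma_h}$ by $(\ref{problem-eq8})$--$(\ref{problem-eq9})$. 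Applying $(\ref{imp-equ1})$ converts $(D(\bu_s),D(\bv_{s,0}))$ into $(D_w(Q_h\bu_s),D_w(\bv_{s,h}))$ up to a correction involving $\bQ_h D(\bu_s)\bn$, and $(\ref{imp-equ2})$ turns $(p_s,\nabla\cdot\bv_{s,0})$ into $-b_{s,h}(\bv_{s,h},R_h^s p_s)$ up to a correction involving $R_h^s p_s\bn$. These corrections combine with the $\langle\Tensor\bn,\bv_{s,0}-\bv_{s,b}\rangle$ pieces to form exactly $-l_1+l_2$. Adding $s(Q_h\bu_s,\bv_{s,h})$ symmetrically on both sides completes $a_{s,h}(Q_h\bu_s,\bv_{s,h})$, and writing $\mu\langle\mathbb K^{-1/2}\bu_s\cdot\btau,\bv_{s,b}\cdot\btau\rangle_{\Gamma_h}=a_{i,h}(Q_h\bu_s,\bv_{s,h})+l_3$ via the splitting $\bu_s=(\bu_s-Q_b\bu_s)+Q_b\bu_s$ completes the interface contribution. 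Subtracting the Stokes part of $(\ref{alg1})$ then yields $(\ref{err-equation1})$.

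\textbf{Darcy and mass conservation.} For $(\ref{err-equation2})$ I test $\mathbb K^{-1}\bu_d+\nabla p_d=\mathbf 0$ against $\bv_{d,h}$ and integrate by parts; the $\Gamma_d$ boundary term vanishes because $\bv_{d,h}\cdot\bn=0$, and the $\Gamma$ term becomes $\langle p_d,\bv_{d,h}\cdot\bn\rangle_{\Gamma_h}$, so subtracting the Darcy part of $(\ref{alg1})$ gives the claim. For $(\ref{err-equation3})$ the commuting property $(\ref{comm2})$ together with $\nabla\cdot\bu_s=0$ gives $b_{s,h}(Q_h\bu_s,q_{s,h})=-(\mathbb Q_h\nabla\cdot\bu_s,q_{s,h})=0$, while $(\ref{alg2})$ with $q_h=(q_{s,h},0)$ gives $b_{s,h}(\bu_{s,h},q_{s,h})=0$. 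Identity $(\ref{err-equation4})$ follows in the same way from $\nabla\cdot\bu_d=f_d$, the MFEM commuting property $(\ref{MFE-pro1})$, and the Darcy row of $(\ref{alg2})$.

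\textbf{Main obstacle.} The delicate step lies entirely in $(\ref{err-equation1})$: one has to track six distinct boundary fluxes ($\Tensor\bn$, $\bQ_h D(\bu_s)\bn$, $p_s\bn$, $R_h^s p_s\bn$, together with the normal and tangential interface components) with the right signs and recognize that exactly the combinations defining $l_1,l_2,l_3$ emerge after regrouping. The interface residual $-\langle p_d,\bv_{s,b}\cdot\bn\rangle_{\Gamma_h}$ is deliberately left uncancelled here; it is precisely what will combine with $\langle p_d,\bv_{d,h}\cdot\bn\rangle_{\Gamma_h}$ from $(\ref{err-equation2})$ once the $V_h$ interface constraint against the Lagrange-multiplier space $\Lambda_h$ is invoked in the subsequent error analysis.
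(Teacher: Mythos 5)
Your proposal is correct and follows essentially the same route as the paper: element-wise integration by parts of the strong equations, cancellation of the single-valued $\bv_{s,b}$ fluxes on interior edges, substitution of the interface conditions \eqref{problem-eq8}--\eqref{problem-eq9} on $\Gamma$, conversion to weak operators via \eqref{imp-equ1}--\eqref{imp-equ2}, and subtraction of the scheme \eqref{alg1}--\eqref{alg2}, with the same bookkeeping producing $l_1$, $l_2$, $l_3$, the stabilizer, and the uncancelled interface pressure term. The only inessential difference is that \eqref{err-equation4} needs no MFEM interpolant property, since it involves $\bu_d$ itself rather than $\Pi_h^d\bu_d$.
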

\begin{proof}
Multiplying the Stokes equation $(\ref{problem-eq1})$ with $\bv_{s,0}$ in $\bv_{s,h}=\{\bv_{s,0},\bv_{s,b}\}\in V^s_h$ and integrating by parts over every element $T_s$.
\begin{eqnarray*}
(\mathbf{f}_s,\bv_{s,0})_{\Omega_s}&=&\sum_{T_s\in\T_{s,h}}(2\nu D(\bu_s),\nabla\bv_{s,0})_{T_s}
-\sum_{T_s\in\T_{s,h}}(\nabla\cdot\bv_{s,0},p_s)_{T_s}
\\&&
-\sum_{T_s\in\T_{s,h}}\langle2\nu\bv_{s,0},D(\bu_s)\cdot\bn\rangle_{\partial T_s}
+\sum_{T_s\in\T_{s,h}}\langle\bv_{s,0},p_s\bn\rangle_{\partial T_s}.
\end{eqnarray*}
By {\color{blue}the} regularity of the true solution $\bu_s$ and $p_s$, and the fact that $\bv_{s,b}=0$ on $\Gamma^s_h$,
\begin{eqnarray*}
(\mathbf{f}_s,\bv_{s,0})_{\Omega_s}&=&\sum_{T_s\in\T_{s,h}}(2\nu D(\bu_s),D(\bv_{s,0}))_{T_s}
-\sum_{T_s\in\T_{s,h}}(\nabla\cdot\bv_{s,0},p_s)_{T_s}
\\&&
-\sum_{T_s\in\T_{s,h}}\langle2\nu(\bv_{s,0}-\bv_{s,b}),D(\bu_s)\cdot\bn\rangle_{\partial T_s}
+\sum_{T_s\in\T_{s,h}}\langle\bv_{s,0}-\bv_{s,b},p_s\bn\rangle_{\partial T_s}
\\
&&-\sum_{e\in\Gamma_h}\langle\bv_{s,b},\Tensor(\bu_s,p_s)\bn\rangle_{e}.
\end{eqnarray*}
From the interface conditions, %(\ref{problem-eq7})-(\ref{problem-eq8}),
we know that
\begin{eqnarray*}
-\sum_{e\in\Gamma_h}\langle\bv_{s,b},\Tensor(\bu_s,p_s)\bn\rangle_{e}
=\langle p_d,\bv_{s,b}\cdot\bn\rangle_{\Gamma_h}+\langle \mu\mathbb{K}^{-\frac12}\bu_s\btau,\bv_{s,b}\cdot\btau\rangle_{\Gamma_h}.
\end{eqnarray*}
Applying Lemma $(\ref{pre-erreqn})$ yields
\begin{eqnarray*}
&&(\mathbf{f}_s,\bv_{s,0})_{\Omega_s}
\\
&=&\sumTa(2\nu D_w(Q_h\bu_s),D_w(\bv_{s,h}))_{T_s}-\sumTa(\nabla_w\cdot\bv_{s,h},R^s_hp_{_{s}})_{T_s}\\
&&-\sumTa\langle2\nu(\bv_{s,0}-\bv_{s,b}),D(\bu_s)\cdot\bn-(\bQ_hD(\bu_s))\cdot\bn\rangle_{\partial T_s}\\
&&+\sumTa\langle\bv_{s,0}-\bv_{s,b},(p_s-R^s_h p_s)\bn\rangle_{\partial T_s}
+\sum_{e\in\Gamma_h}\langle  p_d,\bv_{s,b}\cdot\bn\rangle_{e}\\
&&+\sum_{e\in\Gamma_h}\langle\mu\mathbb{K}^{-1/2} \bu_s\cdot\btau, \bv_{s,b}\cdot\btau\rangle_{e}
\\
&=&a_{s,h}(Q_h\bu_s,\bv_{s,h})+a_{i,h}(Q_h\bu_s,\bv_{s,h})+b_{s,h}(\bv_{s,h},R^s_hp_s)-s(Q_h\bu_s,\bv_{s,h})\\
&&-\sum_{T_s\in\T_{s,h}}\langle2\nu(\bv_{s,0}-\bv_{s,b}),D(\bu_s)\cdot\bn-(\bQ_hD(\bu_s))\cdot\bn\rangle_{\partial T_s}\\
&&+\sum_{T_s\in\T_{s,h}}\langle\bv_{s,0}-\bv_{s,b},(p_s-R^s_hp_s)\bn\rangle_{\partial T_s}
+\sum_{e\in\Gamma_h}\langle  p_d,\bv_{s,b}\cdot\bn\rangle_{e}\\
&&+\sum_{e\in\Gamma_h}\langle\mu\mathbb{K}^{-1/2} (\bu_s-Q_h\bu_s)\cdot\btau, \bv_{s,b}\cdot\btau\rangle_{e}.
\end{eqnarray*}
Therefore, we have
\begin{eqnarray*}
&&a_{s,h}(Q_h\bu_s,\bv_{s,h})+a_{i,h}(Q_h\bu_s,\bv_{s,h})+b_{s,h}(\bv_{s,h},R^s_hp_s)\\
&=&(\mathbf{f}_s,\bv_{s,0})_{\Omega_s}+s(Q_h\bu_s,\bv_{s,h})\\
 && \quad +\sum_{T\in\T_{s,h}}\langle2\nu(\bv_{s,0}-\bv_{s,b}),D(\bu_s)\cdot\bn-(\bQ_hD(\bu_s))\cdot\bn\rangle_{\partial T_s}\\
 && \quad -\sum_{T\in\T_{s,h}}\langle\bv_{s,0}-\bv_{s,b},(p_s-R^s_h p_s)\bn\rangle_{\partial T_s}
-\sum_{e\in\Gamma_h}\langle  p_d,\bv_{s,b}\cdot\bn\rangle_{e}\\
 && \quad -\sum_{e\in\Gamma_h}\langle\mu\mathbb{K}^{-1/2} (\bu_s-Q_h\bu_s)\cdot\btau, \bv_{s,b}\cdot\btau\rangle_{e}.
\end{eqnarray*}
Using the definition of $Q_h$ and $\mathbb{Q}_h$, we have
\begin{eqnarray*}
b_{s,h}(Q_h\bu,q_h)=-(\nabla_w\cdot(Q_h\bu),q_h)=-(\mathbb{Q}_h(\nabla\cdot\bu),q_h)=(\nabla\cdot\bu,q_h)=0.
\end{eqnarray*}
As for the Darcy's law $(\ref{problem-eq4})$, multiplying a test function $\bv_{d,h}\in V^d_h$ and using integration by parts on the Darcy region yields
\begin{eqnarray*}
%&=&0
0&=&(\mathbb{K}^{-1}\bu_d,\bv_{d,h})+(\nabla p_d,\bv_{d,h})\\
&=&(\mathbb{K}^{-1}\bu_d,\bv_{d,h})-(p_d,\nabla\cdot\bv_{d,h})-\langle p_d,\bv_{d,h}\cdot\bn\rangle_{\Gamma}\\
&=&a_{d,h}(\bu_d,\bv_{d,h})+b_{d,h}(\bv_{d,h},p_{d})-\langle p_d,\bv_{d,h}\cdot\bn\rangle_{\Gamma_h}%\\
,
\end{eqnarray*}
which means that
\begin{eqnarray*}
a_{d}(\bu_d,\bv_{d,h})+b_{d,h}(\bv_{d,h},p_{d})=\langle p_d,\bv_{d,h}\cdot\bn\rangle_{\Gamma_h}.
\end{eqnarray*}
It is obvious that
\begin{eqnarray*}
b_{d,h}(\bu_{d},q_h)=(f_d,q_h).
\end{eqnarray*}
Combining with $(\ref{alg1})-(\ref{alg2})$, we obtain equations $(\ref{err-equation1})-(\ref{err-equation4})$.
\end{proof}

\begin{theorem}\label{th6-1}
Let $(\bu,p)$ be the solutions of the coupled problem $(\ref{problem-eq1})-(\ref{problem-eq9})$. Assume that $\bu|_{\Omega_i}$$\in [H^{\alpha_i+1}(\Omega)]^{2}$, $p|_{\Omega_i}\in H^{\gamma _i+1}(\Omega_s)$, $i=s,d$.  Let $(\bu_h,p_h)$ be the discrete solutions of $(\ref{alg1})-(\ref{alg2})$. Then the following estimate holds.
\begin{eqnarray}\label{err-est}
&&\|Q_h\bu_s-\bu_{s,h}\|_{V^s_h}+\|\bu_d-\bu_{d,h}\|_{V^d_h}\\\nonumber
&\leq& C(h_s^{\beta+1}\|\bu_s\|_{\beta+2,\Omega_s}+h_s^{\gamma_s+1}\|p_s\|_{\gamma_s+1,\Omega_s}+h_s^{\beta+1}\|\bu_s\|_{\beta+1,\Gamma}+h_s^{\alpha_s}\|\bu_s\|_{\alpha_s+1,\Omega_s})\\\nonumber
&+&C(h^{\alpha_d}\|\bu_d\|_{\alpha_d+1}+h^{\gamma_d+1}\|\bu_d\|_{\gamma_d+2,\Omega_d}+h_d^{\gamma_d+1/2}h^{1/2}_s\|p_d\|_{\gamma_d+1,\Omega_d}).
\end{eqnarray}
\end{theorem}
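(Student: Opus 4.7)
The plan is an interpolated-error argument. Set $e^I_{s,h}:=Q_h\bu_s-\bu_{s,h}$ and $e^I_{d,h}:=\tilde{\bu}_d-\bu_{d,h}$, where $\tilde{\bu}_d=\pi^d_h\bu_d$ is the Darcy component of the global projection $\tilde{\bv}_h\in V_h$ constructed in Lemmas \ref{inf-sup} and \ref{projection}; the auxiliary problem used there guarantees that the normal trace of $\tilde{\bu}_d$ across $\Gamma$ matches $Q_b\bu_s\cdot\bn$, so that $e^I_h:=(e^I_{s,h},e^I_{d,h})\in V_h$. Combining (\ref{err-equation3})--(\ref{err-equation4}) with (\ref{interpolant3-pro}) yields $b_h(e^I_h,q_h)=0$ for every $q_h\in M_h$; in particular $\nabla\cdot e^I_{d,h}=0$, so (\ref{coe}) gives $\|e^I_h\|_{V_h}^2=a_h(e^I_h,e^I_h)$.

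Next I take $\bv_h=e^I_h$ in (\ref{err-equation1})--(\ref{err-equation2}), split $a_d(\bu_d-\bu_{d,h},e^I_{d,h})=a_d(e^I_{d,h},e^I_{d,h})+a_d(\bu_d-\tilde{\bu}_d,e^I_{d,h})$, and use $b_{s,h}(e^I_{s,h},R^s_h p_s-p_{s,h})=0$ to derive the identity
\begin{eqnarray*}
\|e^I_h\|_{V_h}^2 &=& l_1(\bu_s,e^I_{s,h})-l_2(p_s,e^I_{s,h})-l_3(\bu_s,e^I_{s,h})+s(Q_h\bu_s,e^I_{s,h})\\
&&-\,a_d(\bu_d-\tilde{\bu}_d,e^I_{d,h})-\langle p_d,(e^I_{s,b}-e^I_{d,h})\cdot\bn\rangle_{\Gamma_h}.
\end{eqnarray*}
Each $l_j$, the stabilizer term, and the Darcy interpolation term are then controlled by Cauchy--Schwarz on $\partial T_s$, the trace and inverse inequalities, the approximation properties of $Q_h$, $\mathbf{Q}_h$, $R^s_h$, and the bound (\ref{interpolant3-err}) from Lemma \ref{projection}. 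These produce, respectively, the contributions $h_s^{\beta+1}\|\bu_s\|_{\beta+2,\Omega_s}$, $h_s^{\gamma_s+1}\|p_s\|_{\gamma_s+1,\Omega_s}$, $h_s^{\beta+1}\|\bu_s\|_{\beta+1,\Gamma}$, $h_s^{\alpha_s}\|\bu_s\|_{\alpha_s+1,\Omega_s}$, and the Darcy rates $h_d^{\alpha_d}\|\bu_d\|_{\alpha_d+1}+h_d^{\gamma_d+1}\|\nabla\cdot\bu_d\|_{\gamma_d+1,\Omega_d}$ in (\ref{err-est}), each multiplied by a trailing $\|e^I_h\|_{V_h}$ factor.

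The main obstacle is the coupling term $\langle p_d,(e^I_{s,b}-e^I_{d,h})\cdot\bn\rangle_{\Gamma_h}$, because the Stokes and Darcy partitions on $\Gamma$ need not match and the two traces live in different discrete spaces. The key device is the $\Lambda_h$-orthogonality built into the definition of $V_h$: since $e^I_h\in V_h$, we may replace $p_d$ by $p_d-\eta$ for any $\eta\in\Lambda_h$. Choosing $\eta$ to be the $L^2(\Gamma)$-projection of $p_d$ onto $\Lambda_h$ and pairing a polynomial approximation estimate on the Darcy trace with a scaled trace inequality on the Stokes side produces precisely the mixed-order residual $h_d^{\gamma_d+1/2}h_s^{1/2}\|p_d\|_{\gamma_d+1,\Omega_d}$ that appears in (\ref{err-est}). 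Finally, dividing through by $\|e^I_h\|_{V_h}$ and applying the triangle inequality $\|\bu_d-\bu_{d,h}\|_{V^d_h}\le\|\bu_d-\tilde{\bu}_d\|_{V^d_h}+\|e^I_{d,h}\|_{V^d_h}$, with the first piece controlled directly by Lemma \ref{projection}, converts the bound on $e^I_h$ into the stated bound on $\|Q_h\bu_s-\bu_{s,h}\|_{V^s_h}+\|\bu_d-\bu_{d,h}\|_{V^d_h}$.
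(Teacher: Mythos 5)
Your proposal is correct and follows essentially the same route as the paper's proof: the same interpolants $Q_h\bu_s$ and $\tilde{\bu}_d=\pi^d_h\bu_d$, the same coercivity argument after showing $\nabla\cdot(\tilde{\bu}_d-\bu_{d,h})=0$, the same term-by-term estimates via Lemma \ref{equ-err} and (\ref{interpolant3-err}), and the same treatment of the interface term by inserting the $L^2(\Gamma)$-projection of $p_d$ onto $\Lambda_h$ and exploiting the $\Lambda_h$-orthogonality in the definition of $V_h$. No substantive differences to report.
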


\begin{proof}
Adding equation $(\ref{err-equation2})$ to $(\ref{err-equation1})$, we have
\begin{eqnarray}\label{err-est1}
&&a_{s,h}(Q_h\bu_s-\bu_{s,h},\bv_{s,h})+b_{s,h}(\bv_{s,h},R^s_hp_s-p_{s,h})\\\nonumber
\quad&+&a_{i,h}(Q_h\bu_s-\bu_{s,h},\bv_{s,h})+a_{d}(\tilde{\bu}_d-\bu_{d,h},\bv_{d,h})+b_{d}(\bv_{d,h},R^d_hp_d-p_{d,h})\\\nonumber
&=&l_1(\bu_s,\bv_{s,h})-l_2(p_s,\bv_{s,h})-l_3(\bu_s,\bv_{s,h})+s(Q_h\bu_s,\bv_{s,h})\\\nonumber
&+&a_{d}(\tilde{\bu}_d-\bu_d,\bv_{d,h})+b_{d}(\bv_{d,h},R^d_hp_d-p_d)-\langle p_d,(\bv_{s,b}-\bv_{d,h})\cdot\bn\rangle_{\Gamma_h}.
\end{eqnarray}
From the Lemma $\ref{projection}$ and equation $(\ref{err-equation4})$, we get
\begin{eqnarray*}
b_d(\bu_{d,h}-\tilde{\bu}_d,q_{d,h})&=&0, \qquad \forall q_h\in M^d_h.
\end{eqnarray*}
 Since $\nabla \cdot V^d_h\subset M^d_h$,
 \begin{eqnarray*}
 \nabla\cdot(\bu_{d,h}-\tilde{\bu}_d)=0, ~in ~\Omega_d.
 \end{eqnarray*}
 Define $\be_{s,h}=Q_h\bu_s-\bu_{s,h},~\be_{d,h}=\tilde{\bu}_d-\bu_{d,h},~\epsilon_{s,h}=R^s_hp_s-p_{s,h}$ and $\epsilon_{d,h}=R^d_hp_d-p_{d,h}$.
 Taking $\bv_{s,h}=\be_{s,h},~\bv_{d,h}=\be_{d,h},~q_{s,h}=\epsilon_{s,h}$ and $q_{d,h}=\epsilon_{d,h}$ in $(\ref{err-est1})$, and combining with $(\ref{err-equation3})$, we have
 \begin{eqnarray*}
&&a_{s,h}(\be_{s,h},\be_{s,h})+a_{i,h}(\be_{s,h},\be_{s,h})+a_{d}(\be_{d,h},\be_{d,h})\\
&=&l_1(\bu_s,\be_{s,h})-l_2(p_s,\be_{s,h})-l_3(\bu_s,\be_{s,h})+s(Q_h\bu_s,\be_{s,h})\\
&+&a_{d}(\tilde{\bu}_d-\bu_d,\be_{d,h})-\langle p_d,(\bv_{s,b}-\bv_{d,h})\cdot\bn\rangle_{\Gamma_h}.
\end{eqnarray*}
We define $\be_h=(\be_{s,h},\be_{d,h})$. Making use of coercivity $(\ref{coe})$ and noting that $\nabla\cdot\be_{d,h}=0$ in $\Omega_d,$ we obtain
\begin{eqnarray*}
\|\be_h\|^2_{V_h}&=&a_{s,h}(\be_{s,h},\be_{s,h})+a_{i,h}(\be_{s,h},\be_{s,h})+a_{d}(\be_{d,h},\be_{d,h})\\
&=&l_1(\bu_s,\be_{s,h})-l_2(p_s,\be_{s,h})-l_3(\bu_s,\be_{s,h})+s(Q_h\bu_s,\be_{s,h})\\
&+&a_{d}(\tilde{\bu}_d-\bu_d,\be_{d,h})-\langle p_d,(\be_{s,b}-\be_{d,h})\cdot\bn\rangle_{\Gamma_h}.
\end{eqnarray*}

Next, we are going to estimate each term on the right-hand side of the above equation one by one.
It follows from $(\ref{l_1})-(\ref{stab})$ that
\begin{eqnarray*}
&&l_1(\bu_s,\be_{s,h})-l_2(p_s,\be_{s,h})-l_3(\bu_s,\be_{s,h})+s(Q_h\bu_s,\be_{s,h})\\
&\leq& C(h_s^{\beta+1}\|\bu_s\|_{\beta+2,\Omega_s}+h_s^{\gamma_s+1}\|p_s\|_{\gamma_s+1,\Omega_s}+h_s^{\beta+1}\|\bu_s\|_{\beta+1,\Gamma}+h_s^{\alpha_s}\|\bu_s\|_{\alpha_s+1,\Omega_s})\|\be_{s,h}\|_{V^s_h}.
\end{eqnarray*}

Using the Cauchy Schwarz inequality and $(\ref{interpolant3-err})$, we have
\begin{eqnarray*}
&&a_{d}(\tilde{\bu}_d-\bu_d,\be_{d,h})\\
&\leq&C\|\tilde{\bu}_d-\bu_d\|_{V^d_h}\cdot\|\be_{d,h}\|_{V^d_h}\\
&\leq&  C (h_d^{\alpha_d}\|\bu_d\|_{\alpha_d+1,\Omega_d}+h_d^{\gamma_d+1}\|\bu_d\|_{\gamma_d+2}+h^{\alpha_s+\frac12}_s\|\bu_s\|_{\alpha_s+1,\Omega_s})\|\be_{d,h}\|_{V^d_h}.
\end{eqnarray*}

Finally, to estimate $\langle p_d,(\be_{s,b}-\bv_{d,h})\cdot\bn\rangle_{\Gamma_h}$, we define a $L^2$ projection $R^e_h$ onto $\Lambda_h$ as follows.
\begin{eqnarray*}
\langle p_d,\lambda_h\rangle_{\Gamma_h}=\langle R^e_hp_d,\lambda_h\rangle_{\Gamma_h},\qquad \forall\lambda_h\in \Lambda_h.
\end{eqnarray*}
Since $\be_h=(\be_{s,h},\be_{d,h})\in ~V_h$, from the definition of $V_h$, we know that
\begin{eqnarray*}
\sum_{e\in\Gamma_h}\int_e\eta (\be_{s,b}-\be_{d,h})\cdot\bn=0, ~\forall \eta\in \Lambda_h.
\end{eqnarray*}
Combining with the fact that $R^e_hp_d\in \Lambda_h$,
\begin{eqnarray*}
\sum_{e\in\Gamma_h}\langle p_d,(\be_{s,b}-\be_{d,h})\cdot\bn\rangle_e=\sum_{e\in\Gamma_h}\langle p_d-R^e_hp_d,(\be_{s,b}-\be_{d,h})\cdot\bn\rangle_e
\end{eqnarray*}
Noting that $\be_{d,h}\cdot\bn\in \Lambda_h$, so we have
\begin{eqnarray*}
\sum_{e\in\Gamma_h}\langle p_d,(\be_{s,b}-\be_{d,h})\cdot\bn\rangle_e=\sum_{e\in\Gamma_h}\langle p_d-R^e_hp_d,(\be_{s,b})\cdot\bn\rangle_e
\end{eqnarray*}
For any constant vector $\mathbf{c}_e$, using the property of $R^e_h$, the trace inequality $(\ref{trace-thm})$ and Lemma $(\ref{lemma-korn})$, we obtain
\begin{eqnarray*}
&&\sum_{e\in\Gamma_h}\langle p_d-R^e_hp_d,(\be_{s,b})\cdot\bn\rangle_e\\
&=&\sum_{e\in\Gamma_h}\langle p_d-R^e_hp_d,(\be_{s,b}-\mathbf{c}_e)\cdot\bn\rangle_e\\
&\leq&\sum_{e\in\Gamma_h}\|p_d-R^e_hp_d\|_{e}\|\be_{s,b}-\mathbf{c}_e\|_e\\
&\leq&\sum_{e\in\Gamma_h}\|p_d-R^d_hp_d\|_{e}\|\be_{s,b}-\mathbf{c}_e\|_e\\
&\leq& Ch^{\gamma_d+1/2}_d\|p_d\|_{\gamma_d+1,\Omega_d}\sum_{T_s\in\T_{s,h}}(\|\be_{s,b}-Q_b\be_{s,0}\|_{\partial T_s}+\|Q_b\be_{s,0}-\mathbf{c}_e\|_{\partial T_s})\\
&\leq& Ch^{\gamma_d+1/2}_d\|p_d\|_{\gamma_d+1,\Omega_d}\Big(h^{1/2}_s\|\be_{s,h}\|_{V^s_h} +\sum_{T_s\in\T_{s,h}}C(h^{-1/2}_s\|\be_{s,0}-\mathbf{c}_e\|_{T_s}+h^{1/2}_s\|\nabla \be_{s,0}\|_{T_s})
   \Big)\\
&\leq&Ch^{\gamma_d+1/2}_d\|p_d\|_{\gamma_d+1,\Omega_d}h^{1/2}_s\|\be_{s,h}\|_{V^s_h}.
\end{eqnarray*}

Combining the above estimates, we obtain
\begin{eqnarray*}
&&\|\be_{s,h}\|_{V^s_h}+\|\bu_d-\bu_{d,h}\|_{V^d_h}\\
&=&\|\be_{s,h}\|_{V^s_h}+\|\be_{d,h}\|_{V^d_h}+\|\bu_d-\tilde{\bu}_d\|_{V^d_h}\\
&\leq& C(h_s^{\beta+1}\|\bu_s\|_{\beta+2,\Omega_s}+h_s^{\gamma_s+1}\|p_s\|_{\gamma_s+1,\Omega_s}+h_s^{\beta+1}\|\bu_s\|_{\beta+1,\Gamma}+h_s^{\alpha_s}\|\bu_s\|_{\alpha_s+1,\Omega_s})\\
&+&C(h^{\alpha_d}\|\bu_d\|_{\alpha_d+1,\Omega_d}+h_d^{\gamma_d+1}\|\bu_d\|_{\gamma_d+2}+h_d^{\gamma_d+1/2}h^{1/2}_s\|p_d\|_{\gamma_d+1,\Omega_d}).
\end{eqnarray*}
{\color{black}which completes} the proof of the theorem.
\end{proof}

\begin{theorem}\label{th6-2}
Under the assumption of Theorem $(\ref{th6-1})$, we have
\begin{eqnarray}\label{err-est2}
&&\|R^s_hp_s-p_{s,h}\|_{\Omega_s}+\|p_d-p_{h,d}\|_{\Omega_d}\\\nonumber
&\leq& C(h_s^{\beta+1}\|\bu_s\|_{\beta+2,\Omega_s}+h_s^{\gamma_s+1}\|p_s\|_{\gamma_s+1,\Omega_s}+h_s^{\beta+1}\|\bu_s\|_{\beta+1,\Gamma}+h_s^{\alpha_s}\|\bu_s\|_{\alpha_s+1,\Omega_s})\\\nonumber
&+&C(h^{\alpha_d}\|\bu_d\|_{\alpha_d+1,\Omega_d}+h_d^{\gamma_d+1}\|\bu_d\|_{\gamma_d+2}+h_d^{\gamma_d+1/2}h^{1/2}_s\|p_d\|_{\gamma_d+1,\Omega_d}).
\end{eqnarray}
\end{theorem}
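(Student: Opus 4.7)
The natural approach is to combine the inf-sup condition (Lemma 4.4) for $b_h(\cdot,\cdot)$ with the error equations \eqref{err-equation1}--\eqref{err-equation4}, and then invoke the velocity error bound from Theorem 5.2 to control the right-hand side. Set $\epsilon_h=(\epsilon_{s,h},\epsilon_{d,h}):=(R^s_hp_s-p_{s,h},\,R^d_hp_d-p_{d,h})\in M_h$. The plan is first to bound $\|\epsilon_h\|_{M_h}$ via the inf-sup inequality, then to add the pressure projection error $\|p_d-R^d_h p_d\|_{\Omega_d}$ at the end through the triangle inequality (this contributes only a harmless $h_d^{\gamma_d+1}\|p_d\|_{\gamma_d+1,\Omega_d}$ term which is dominated by the stated bound).

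By Lemma 4.4, it suffices to produce, for arbitrary $\bv_h=(\bv_{s,h},\bv_{d,h})\in V_h$, an upper bound of $b_h(\bv_h,\epsilon_h)$ of the form $C\,\mathcal{R}(h)\|\bv_h\|_{V_h}$, where $\mathcal{R}(h)$ denotes the right-hand side of \eqref{err-est}. Adding the two error equations \eqref{err-equation1} and \eqref{err-equation2} (and using $\nabla\cdot V^d_h\subset M^d_h$ so that $b_{d,h}(\bv_{d,h},p_d-p_{d,h})=b_{d,h}(\bv_{d,h},R^d_h p_d-p_{d,h})$) yields
\begin{eqnarray*}
b_h(\bv_h,\epsilon_h)
&=& l_1(\bu_s,\bv_{s,h})-l_2(p_s,\bv_{s,h})-l_3(\bu_s,\bv_{s,h})+s(Q_h\bu_s,\bv_{s,h})\\
&& -\,a_{s,h}(Q_h\bu_s-\bu_{s,h},\bv_{s,h})-a_{i,h}(Q_h\bu_s-\bu_{s,h},\bv_{s,h})\\
&& -\,a_d(\bu_d-\bu_{d,h},\bv_{d,h})-\langle p_d,(\bv_{s,b}-\bv_{d,h})\cdot\bn\rangle_{\Gamma_h}.
\end{eqnarray*}

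The residual terms $l_1,l_2,l_3,s(Q_h\bu_s,\cdot)$ are bounded exactly as in the proof of Theorem 5.2, producing the first parenthesized quantity in \eqref{err-est} times $\|\bv_{s,h}\|_{V^s_h}$. The two bilinear-form terms are handled by the boundedness \eqref{bound} and Cauchy--Schwarz, and then Theorem 5.2 converts $\|Q_h\bu_s-\bu_{s,h}\|_{V^s_h}+\|\bu_d-\bu_{d,h}\|_{V^d_h}$ into $\mathcal{R}(h)$.

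The only delicate piece is the interface term $\langle p_d,(\bv_{s,b}-\bv_{d,h})\cdot\bn\rangle_{\Gamma_h}$, which is precisely the obstacle I expect to be the main difficulty; I would treat it by mimicking the argument already used at the end of the proof of Theorem 5.2. Introducing the $L^2$ projection $R^e_h$ onto $\Lambda_h$ and exploiting the constraint in the definition of $V_h$ that $\sum_{e\in\Gamma_h}\int_e\eta(\bv_{s,b}-\bv_{d,h})\cdot\bn=0$ for every $\eta\in\Lambda_h$, together with $\bv_{d,h}\cdot\bn\in\Lambda_h$, reduces this to $\sum_{e\in\Gamma_h}\langle p_d-R^e_h p_d,\bv_{s,b}\cdot\bn\rangle_e$. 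Subtracting an arbitrary elementwise constant $\mathbf{c}_e$, comparing $R^e_h p_d$ with $R^d_h p_d$ on $e$ (to pull in the standard $L^2$-projection bound $\|p_d-R^d_h p_d\|_e\le Ch_d^{\gamma_d+1/2}\|p_d\|_{\gamma_d+1,\Omega_d}$), and applying the trace inequality and Lemma 4.1 on $\bv_{s,b}-\mathbf{c}_e$ gives the expected $h_d^{\gamma_d+1/2}h_s^{1/2}\|p_d\|_{\gamma_d+1,\Omega_d}\|\bv_{s,h}\|_{V^s_h}$ contribution. Collecting all bounds and dividing by $\|\bv_h\|_{V_h}$ concludes the proof after one more application of the triangle inequality to pass from $R^d_h p_d-p_{d,h}$ back to $p_d-p_{d,h}$.
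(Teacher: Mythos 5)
Your proposal is correct and follows essentially the same route as the paper: rearrange the summed error equations to isolate $b_h(\bv_h,\epsilon_h)$, kill $b_{d,h}(\bv_{d,h},R^d_hp_d-p_d)$ via the definition of $R^d_h$, bound the remaining terms with Lemma \ref{equ-err}, the boundedness \eqref{bound}, Theorem \ref{th6-1}, and the same $R^e_h$-based interface argument, then apply the inf-sup condition and a final triangle inequality with \eqref{p-pro}. No substantive differences.
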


\begin{proof}
The error equation $(\ref{err-est1})$ can be written as
\begin{eqnarray*}
&&b_{s,h}(\bv_{s,h},R^s_hp_s-p_{s,h})+b_{d}(\bv_{d,h},R^d_hp_d-p_{d,h})\\
&=&-a_{s,h}(Q_h\bu_s-\bu_{s,h},\bv_{s,h})-a_{i,h}(Q_h\bu_s-\bu_{s,h},\bv_{s,h})+a_{d}(\bu_{d,h}-\bu_d,\bv_{d,h})+l_1(\bu_s,\bv_{s,h})\\
&-&l_2(p_s,\bv_{s,h})-l_3(\bu_s,\bv_{s,h})+s(Q_h\bu_s,\bv_{s,h})+b_{d}(\bv_{d,h},R^d_hp_d-p_d)-\langle p_d,(\bv_{s,b}-\bv_{d,h})\cdot\bn_s\rangle_{\Gamma_h}.
\end{eqnarray*}
From the definition of $R^d_h$, we know that
\begin{eqnarray*}
b_{d}(\bv_{d,h},R^d_hp_d-p_d)=0.
\end{eqnarray*}
{\color{black}Thus,}
\begin{eqnarray*}
&&b_{s,h}(\bv_{s,h},R^s_hp_s-p_{s,h})+b_{d,h}(\bv_{d,h},R^d_hp_d-p_{d,h})\\
&\leq& C\|Q_h\bu_s-\bu_{s,h}\|_{V^s_h}\|\bv_{s,h}\|_{V^s_h}+C\|\bu_{d,h}-\bu_d\|_{V^d_h}\|\bv_{d,h}\|_{V^d_h}\\
&+& C(h_s^{\beta+1}\|\bu_s\|_{\beta+2,\Omega_s}+h_s^{\gamma_s+1}\|p_s\|_{\gamma_s+1,\Omega_s}+h_s^{\beta+1}\|\bu_s\|_{\beta+1,\Gamma}+h_s^{\alpha_s}\|\bu_s\|_{\alpha_s+1,\Omega_s})\|\bv_{s,h}\|_{V^s_h}\\
&+&Ch^{\gamma_d+1/2}_dh^{1/2}_s\|p_d\|_{\gamma_d+1,\Omega_d}\|\bv_{d,h}\|_{V^d_h}.
\end{eqnarray*}
It follows from the inf-sup condition and the Theorem $\ref{th6-1}$ that
\begin{eqnarray*}
&&\|R^s_hp_s-p_{s,h}\|_{\Omega_s}+\|R^d_hp_d-p_{d,h}\|_{\Omega_d}\\
&\leq& C(h_s^{\beta+1}\|\bu_s\|_{\beta+2,\Omega_s}+h_s^{\gamma_s+1}\|p\|_{\gamma_s+1,\Omega_s}+h_s^{\beta+1}\|\bu_s\|_{\beta+1,\Gamma}+h_s^{\alpha_s}\|\bu_s\|_{\alpha_s+1,\Omega_s})\\\nonumber
&+&C(h^{\alpha_d}\|\bu_d\|_{\alpha_d+1,\Omega_d}+h_d^{\gamma_d+1}\|\bu_d\|_{\gamma_d+2}+h_d^{\gamma_d+1/2}h^{1/2}_s\|p_d\|_{\gamma_d+1,\Omega_d}).
\end{eqnarray*}
Finally, using the estimate $(\ref{p-pro})$, we have
\begin{eqnarray*}
&&\|R^s_hp_s-p_s\|_{\Omega_s}+\|p_d-p_{h,d}\|_{\Omega_d}\\
&\leq& C(h_s^{\beta+1}\|\bu_s\|_{\beta+2,\Omega_s}+h_s^{\gamma_s+1}\|p_s\|_{\gamma_s+1,\Omega_s}+h_s^{\beta+1}\|\bu_s\|_{\beta+1,\Gamma}+h_s^{\alpha_s}\|\bu_s\|_{\alpha_s+1,\Omega_s})\\
&+&C(h^{\alpha_d}\|\bu_d\|_{\alpha_d+1,\Omega_d}+h_d^{\gamma_d+1}\|\bu_d\|_{\gamma_d+2}+h_d^{\gamma_d+1/2}h^{1/2}_s\|p_d\|_{\gamma_d+1,\Omega_d}),
\end{eqnarray*}
which complete the proof.
\end{proof}

\section{Numerical Test}
\def\bm#1{{\mathbf #1}}
In this section, we use two examples to verify our theoretical results  on
    the WG-MFEM scheme for the Stokes-Darcy problem.

In the first {\color{black}example},  we solve the following coupled problem
   on $\{\Omega_s=(0,\pi)\times(0,\pi)\}\cup \{\Omega_d=(0,\pi)\times(-\pi,0)\}$ and the interface $\Gamma=(0,\pi)\times\{0\}$:
\def\p#1{\begin{pmatrix} #1 \end{pmatrix}}
 \begin{align}\label{e1}
   -\nabla \cdot( \nabla \bm u_s + \nabla^T \bm u_s) + \nabla p_s &= \bm f_s &&  \hbox{ in }  \Omega_s, \\
   -\nabla \cdot(   \nabla p_d ) &= 0 &&  \hbox{ in }  \Omega_d,  \label{e1-2} \\
     \label{b1}
   \p{ \bm u_s \cdot \bm n \\ (-\nabla \bm u_s - \nabla^T \bm u_s+ p_s I) \bm n \cdot \bm n \\
      (-\nabla \bm u_s - \nabla^T \bm u_s+ p_s I)\bm n \cdot \bm \tau } &=
   \p{ \bm u_d \cdot \bm n\\ p_d \\ \bm u_s \cdot \bm \tau} && \hbox{ on }   \Gamma,
\end{align}
with outside boundary conditions
 \begin{align*}
    \bm u_s  &= \p{ 2\sin y \cos y \cos x\\
        (\sin^2 y-2)\sin x } &  \hbox{ on } & \Gamma_s, \\
         \bm v_d  \cdot\bm n &= \p{ (e^{-y} -e^y) \cos x \\
        (e^{-y} -e^y) \sin x } \cdot \bm n   &  \hbox{ on } & \Gamma_d.
\end{align*}
The source functions in \eqref{e1}-\eqref{e1-2} are defined by
 \begin{align*}
   \bm f_s &= \p{\sin y \cos x (5\cos y+1) \\
     \sin x (- \cos^2y +\frac32 \sin^2y -1+ \cos y) },\\
     f_d&=0.
\end{align*}
The exact solutions are
  \begin{align} \label{s1} \begin{aligned}
    \bm u_s  &= \p{ 2\sin y \cos y \cos x\\
        (\sin^2 y-2)\sin x } &  \hbox{ in } & \Omega_s, \\
      p_s &= \sin x \sin y  &  \hbox{ in } & \Omega_s, \\
      \bm u_d  &= \p{ -(e^y-e^{-y})\cos x\\
        -(e^y+e^{-y})\sin x } &  \hbox{ in } & \Omega_d, \\
         p_d   &= (e^y - e^{-y}) \sin x  &  \hbox{ in } & \Omega_d. \end{aligned}
 \end{align}
On the interface,  \eqref{b1} is satisfied as
 \begin{align}
     \label{b11}  \begin{aligned} \p{
     2 \sin x \\0\\0}  &= \p{     2 \sin x \\0\\0} \end{aligned} \text{ on } \Gamma.
\end{align}
    We plot the velocity field ($\bm u_s$ \& $\bm u_d$) in Figure \ref{gvector}.

\begin{figure}[htb]\begin{center}\setlength\unitlength{1in}
    \begin{picture}(3,3.3)
 \put(0,0.1){\includegraphics[width=2.4in]{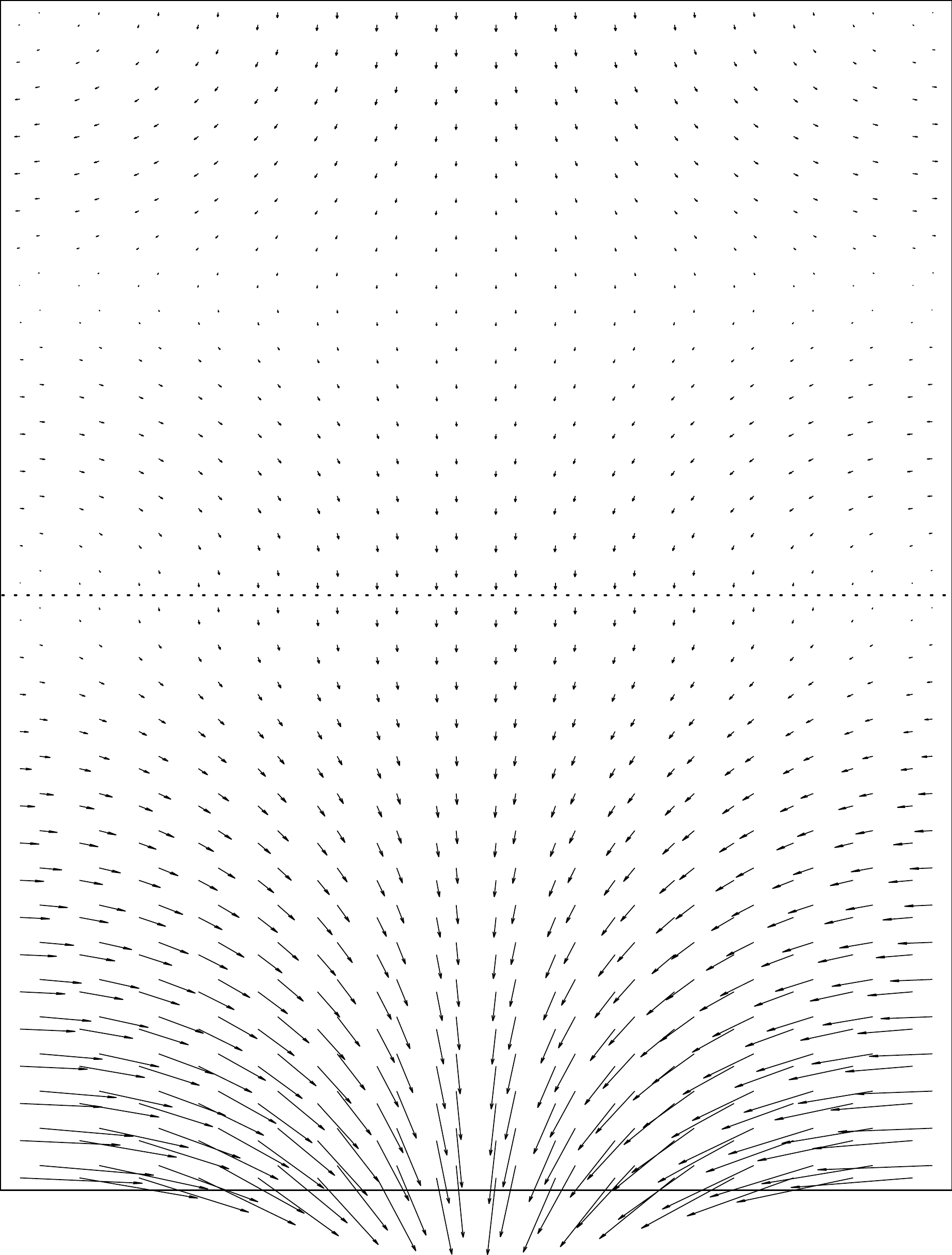}} %using Lualatex then move pdf file
    \end{picture}
\caption{ The velocity field of first example, \eqref{s1}.  } \label{gvector}
\end{center}
\end{figure}

In the computation, the first level grid consists of four triangles,  cutting each of two rectangles
   (see Figure \ref{gvector}) into two triangles by the north-west to south-east diagonal line.
Then, each subsequent grid is a bi-sectional refinement.
We apply the weak Galerkin $P_k$ finite element method for $\bm u_s$ and $p_s$ and the
   mixed BDM $P_k$ finite element method for computing $\bm u_d$ and $p_d$ in solving \eqref{s1}.
The errors and numerical orders of convergence for the unknown functions in various norms are reported in
  Tables \ref{t11}--\ref{t14}.
We can see that all numerical solutions are convergent of optimal order,
  as proved in our two theorems.
Because of coupling, the elliptic regularity for the Stokes-Darcy problem is not known.
Partially for this reason,  one order higher $L^2$ convergence for the velocity cannot
  be proved, or may be proved under some unknown conditions.
It does appear, for this example but not for next example,  in Tables \ref{t11}--\ref{t14}.
We still call such a phenomenon one-order superconvergence for the velocity in $L^2$
   by the WG-BDM $P_k$ elements ($k=1,2,3,4$).

\begin{table}[htb]
  \caption{ The errors and the order $O(h^k)$ of convergence by the $P_1$ WG
     elements and $BDM_1$ elements, for \eqref{s1}.
   \label{t11} }
\begin{center}  \begin{tabular}{c|rr|rr|rr}  %\multispan{3}
\hline
level & $ \|Q_0 \bm u_s- \bm u_{s,h}\|_0 $ & $k$ &  $ \trb Q_h \bm u_s- \bm u_{s,h} \trb $
    & $k$ & $\|R^s_h p_s-p_{s,h}\|_0 $ & $k$  \\ \hline
 4& 0.3643E+00&  1.8& 0.9974E+00&  1.0& 0.3770E+00&  1.1\\
 5& 0.9585E-01&  1.9& 0.5045E+00&  1.0& 0.1480E+00&  1.3\\
 6& 0.2437E-01&  2.0& 0.2523E+00&  1.0& 0.5991E-01&  1.3\\
 7& 0.6120E-02&  2.0& 0.1260E+00&  1.0& 0.2721E-01&  1.1\\
\hline
  & $ \|I_h \bm u_d-\bm u_{d,h}\|_0 $ & $k$ &
   \multicolumn{2}{r|}{ $ \|\operatorname{\mbox{div}} (\bm u_d-\bm u_{d,h})\|_0\; n $} &
    $ \|I_h p_d-p_{d,h}\|_0 $ &$k$  \\ \hline
 4& 0.9155E+00&  2.0& 0.5638E+01&  1.0& 0.6896E+00&  2.0\\
 5& 0.2302E+00&  2.0& 0.2840E+01&  1.0& 0.1731E+00&  2.0\\
 6& 0.5765E-01&  2.0& 0.1423E+01&  1.0& 0.4332E-01&  2.0\\
 7& 0.1442E-01&  2.0& 0.7117E+00&  1.0& 0.1083E-01&  2.0\\
      \hline
\end{tabular}\end{center}
\end{table}

\begin{table}[htb]
  \caption{ The errors and the order $O(h^k)$ of convergence by the $P_2$ WG
     elements and $BDM_2$ elements, for \eqref{s1}.
   \label{t12} }
\begin{center}  \begin{tabular}{c|rr|rr|rr}  %\multispan{3}
\hline
level & $ \|Q_0 \bm u_s- \bm u_{s,h}\|_0 $ & $k$ &  $ \trb Q_h \bm u_s- \bm u_{s,h} \trb $
    & $k$ & $\|R^s_h p_s-p_{s,h}\|_0 $ & $k$  \\ \hline
 3& 0.1269E+00&  2.3& 0.7194E+00&  1.7& 0.1979E+00&  1.7 \\
 4& 0.2179E-01&  2.5& 0.2250E+00&  1.7& 0.5040E-01&  2.0 \\
 5& 0.3200E-02&  2.8& 0.6331E-01&  1.8& 0.1163E-01&  2.1 \\
 6& 0.4273E-03&  2.9& 0.1664E-01&  1.9& 0.2697E-02&  2.1 \\
\hline
   & $ \|I_h \bm u_d-\bm u_{d,h}\|_0 $ & $k$ &
   \multicolumn{2}{r|}{ $ \|\operatorname{\mbox{div}} (\bm u_d-\bm u_{d,h})\|_0\; k $} &
    $ \|I_h p_d-p_{d,h}\|_0 $ &$k$  \\ \hline
 3& 0.1926E+00&  2.9& 0.1456E+01&  1.9& 0.1294E+01&  1.9 \\
 4& 0.2438E-01&  3.0& 0.3721E+00&  2.0& 0.3297E+00&  2.0 \\
 5& 0.3056E-02&  3.0& 0.9355E-01&  2.0& 0.8283E-01&  2.0 \\
 6& 0.3821E-03&  3.0& 0.2342E-01&  2.0& 0.2073E-01&  2.0 \\
      \hline
\end{tabular}\end{center}
\end{table}

\begin{table}[htb]
  \caption{ The errors and the order $O(h^k)$ of convergence by the $P_3$ WG
     elements and $BDM_3$ elements, for \eqref{s1}.
   \label{t13} }
\begin{center}  \begin{tabular}{c|rr|rr|rr}  %\multispan{3}
\hline
level & $ \|Q_0 \bm u_s- \bm u_{s,h}\|_0 $ & $k$ &  $ \trb Q_h \bm u_s- \bm u_{s,h} \trb $
    & $k$ & $\|R^s_h p_s-p_{s,h}\|_0 $ & $k$  \\ \hline
 3& 0.1791E-01&  3.5& 0.1887E+00&  2.7& 0.2979E-01&  2.6 \\
 4& 0.1342E-02&  3.7& 0.2765E-01&  2.8& 0.3425E-02&  3.1 \\
 5& 0.9197E-04&  3.9& 0.3746E-02&  2.9& 0.3748E-03&  3.2 \\
 6& 0.5967E-05&  3.9& 0.4836E-03&  3.0& 0.4302E-04&  3.1 \\
\hline
   & $ \|I_h \bm u_d-\bm u_{d,h}\|_0 $ & $k$ &
   \multicolumn{2}{r|}{ $ \|\operatorname{\mbox{div}} (\bm u_d-\bm u_{d,h})\|_0\; k $} &
    $ \|I_h p_d-p_{d,h}\|_0 $ &$k$  \\ \hline
 3& 0.1207E-01&  3.9& 0.1281E+00&  2.9& 0.1222E+00&  2.9 \\
 4& 0.7753E-03&  4.0& 0.1640E-01&  3.0& 0.1562E-01&  3.0 \\
 5& 0.4877E-04&  4.0& 0.2062E-02&  3.0& 0.1964E-02&  3.0 \\
 6& 0.3051E-05&  4.0& 0.2582E-03&  3.0& 0.2458E-03&  3.0 \\
      \hline
\end{tabular}\end{center}
\end{table}

\begin{table}[htb]
  \caption{ The errors and the order $O(h^k)$ of convergence by the $P_4$ WG
     elements and $BDM_4$ elements, for \eqref{s1}.
   \label{t14} }
\begin{center}  \begin{tabular}{c|rr|rr|rr}  %\multispan{3}
\hline
level & $ \|Q_0 \bm u_s- \bm u_{s,h}\|_0 $ & $k$ &  $ \trb Q_h \bm u_s- \bm u_{s,h} \trb $
    & $k$ & $\|R^s_h p_s-p_{s,h}\|_0 $ & $k$  \\ \hline
 3& 0.1925E-02&  4.7& 0.3302E-01&  3.7& 0.3850E-02&  3.9 \\
 4& 0.6334E-04&  4.9& 0.2141E-02&  3.9& 0.2499E-03&  3.9 \\
 5& 0.2007E-05&  5.0& 0.1349E-03&  4.0& 0.1579E-04&  4.0 \\
\hline
  & $ \|I_h \bm u_d-\bm u_{d,h}\|_0 $ & $k$ &
   \multicolumn{2}{r|}{ $ \|\operatorname{\mbox{div}} (\bm u_d-\bm u_{d,h})\|_0\; k $} &
    $ \|I_h p_d-p_{d,h}\|_0 $ &$k$  \\ \hline
 3& 0.6309E-03&  4.9& 0.8709E-02&  3.9& 0.7438E-02&  3.9 \\
 4& 0.1989E-04&  5.0& 0.5589E-03&  4.0& 0.4763E-03&  4.0 \\
 5& 0.6208E-06&  5.0& 0.3517E-04&  4.0& 0.2995E-04&  4.0 \\
      \hline
\end{tabular}\end{center}
\end{table}

In the second numerical example, we solve the coupled problem \eqref{s1}
   on domain $\{\Omega_s=(0,1)\times(1,2)\}\cup \{\Omega_d=(0,1)\times(0,1)\}$.
The exact solutions are
  \begin{align} \label{s2} \begin{aligned}
    \bm u_s  &= \p{-\cos (\pi x) \sin(\pi y)\\
                    \sin (\pi x) \cos (\pi y) } &  \hbox{ in } & \Omega_s,  \\
      p_s &= \sin (\pi x )   &  \hbox{ in } & \Omega_s, \\
    \bm u_d  &= \p{-y \pi \cos (\pi x) \\
                   -\sin (\pi x) } &  \hbox{ in } & \Omega_d,  \\
         p_d   &= y \sin (\pi x ) &  \hbox{ in } & \Omega_d. \end{aligned}
 \end{align}
On the interface $\Gamma=(0,1)\times\{1\}$, the  condition \eqref{b1} is reduced to
 \begin{align}
     \label{b12}  \begin{aligned} \p{
     - \sin \pi x \\ \sin \pi x \\0}  &= \p{  - \sin \pi x \\ \sin \pi x \\0} \end{aligned}
     \text{ on } \Gamma.
\end{align}
The velocity field $(\bm u_s,\bm u_d)$ is plotted in Figure \ref{gvector2}.
The computational grids are same as those in last example, described above.
We list the order of convergence in Tables \ref{t21}--\ref{t24}, by $P_1$, $P_2$, $P_3$ and $P_4$
  WG-BDM coupled finite element methods.
The results confirm the two theorems proved here.
Like the computation for the first example, one order superconvergence is obtained in the $P_1$ and $P_3$ WG-BDM element velocity solutions in $L^2$-norm.
Unlike the first example,  this example does not have an $L^2$-superconvergence
    for the $P_2$ and the $P_4$ WG--BDM coupled elements.
To see the superconvergence in the other cases,  we plot the solution and the error for the $P_3$ coupled
  element in Figures \ref{g-u1}--\ref{g-p}.

  \begin{figure}[htb]\begin{center}\setlength\unitlength{1in}
    \begin{picture}(2.0,3.4)
 \put(0,0){\includegraphics[width=2.4in]{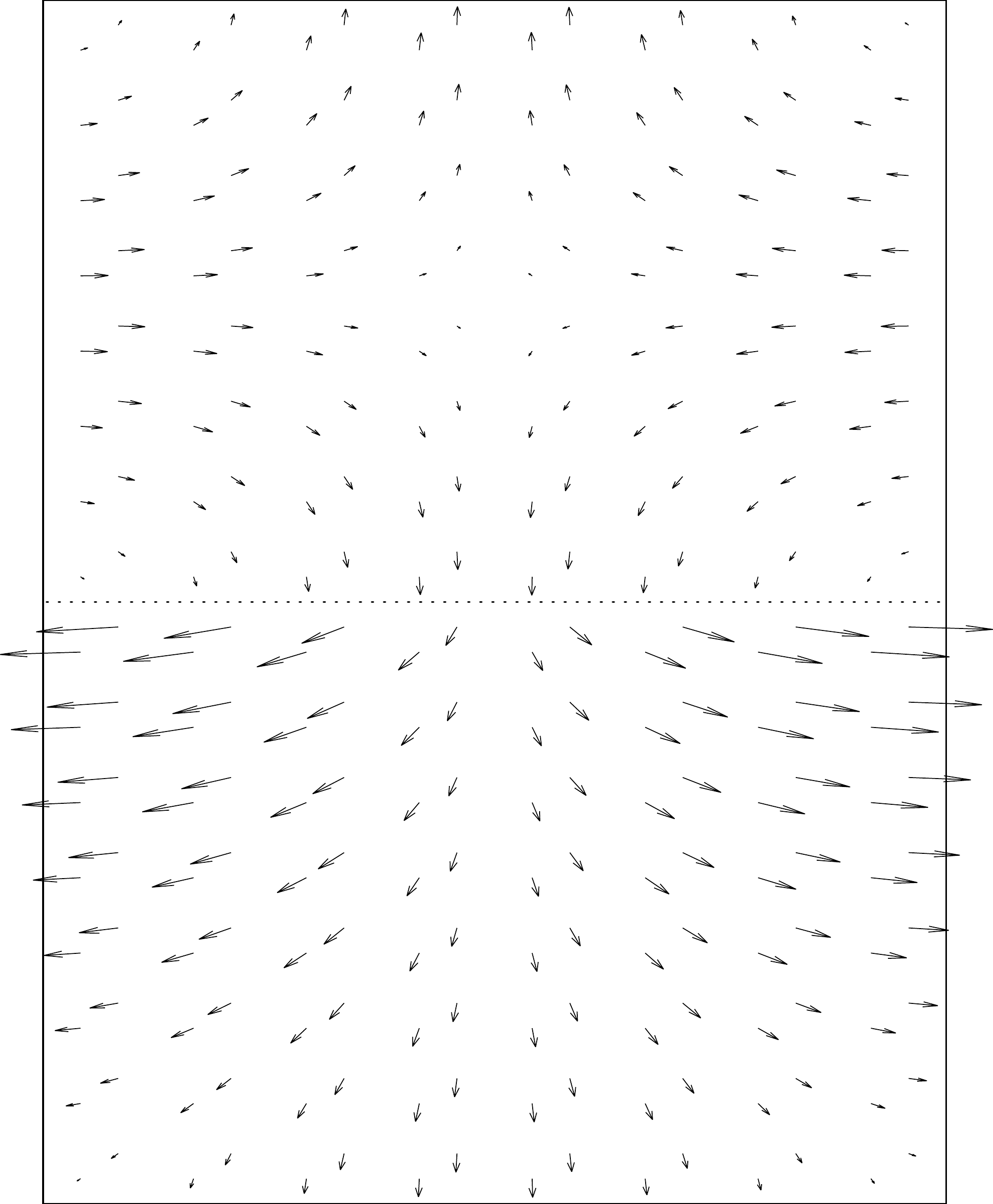}}
    \end{picture}
\caption{ The velocity field of the second example, \eqref{s2}.  } \label{gvector2}
\end{center}
\end{figure}

\begin{table}[htb]
  \caption{ The errors  and the order $O(h^k)$ of convergence by the
   $P_1$ WG and $BDM_1$ coupled element,  for \eqref{s2}.
   \label{t21} }
\begin{center}  \begin{tabular}{c|rr|rr|rr}  %\multispan{3}
\hline
level & $ \|Q_0 \bm u_s- \bm u_{s,h}\|_0 $ & $k$ &  $ \trb Q_h \bm u_s- \bm u_{s,h} \trb $
    & $k$ & $\|R^s_h p_s-p_{s,h}\|_0 $ & $k$  \\ \hline
 5& 0.9163E-02&  1.9& 0.2177E+00&  1.0& 0.6146E-01&  1.0 \\
 6& 0.2312E-02&  2.0& 0.1087E+00&  1.0& 0.3057E-01&  1.0 \\
 7& 0.5790E-03&  2.0& 0.5424E-01&  1.0& 0.1525E-01&  1.0 \\
\hline
   & $ \|I_h \bm u_d-\bm u_{d,h}\|_0 $ & $k$ &
   \multicolumn{2}{r|}{ $ \|\operatorname{\mbox{div}} (\bm u_d-\bm u_{d,h})\|_0\; k $} &
    $ \|I_h p_d-p_{d,h}\|_0 $ &$k$  \\ \hline
 5& 0.3463E-02&  2.0& 0.1960E+00&  1.0& 0.1458E-02&  2.0 \\
 6& 0.8565E-03&  2.0& 0.9811E-01&  1.0& 0.3651E-03&  2.0 \\
 7& 0.2128E-03&  2.0& 0.4907E-01&  1.0& 0.9126E-04&  2.0 \\
     \hline
\end{tabular}\end{center}
\end{table}

\begin{table}[htb]
  \caption{ The errors  and the order $O(h^k)$ of convergence by the
   $P_2$ WG and $BDM_2$ coupled element,  for \eqref{s2}.
   \label{t22} }
\begin{center}  \begin{tabular}{c|rr|rr|rr}  %\multispan{3}
\hline
level & $ \|Q_0 \bm u_s- \bm u_{s,h}\|_0 $ & $k$ &  $ \trb Q_h \bm u_s- \bm u_{s,h} \trb $
    & $k$ & $\|R^s_h p_s-p_{s,h}\|_0 $ & $k$  \\ \hline
 4& 0.1293E-01&  2.1& 0.8738E-01&  2.0& 0.2327E-01&  2.0 \\
 5& 0.3196E-02&  2.0& 0.2210E-01&  2.0& 0.5794E-02&  2.0 \\
 6& 0.7967E-03&  2.0& 0.5555E-02&  2.0& 0.1444E-02&  2.0 \\
\hline
  & $ \|I_h \bm u_d-\bm u_{d,h}\|_0 $ & $k$ &
   \multicolumn{2}{r|}{ $ \|\operatorname{\mbox{div}} (\bm u_d-\bm u_{d,h})\|_0\; k $} &
    $ \|I_h p_d-p_{d,h}\|_0 $ &$k$  \\ \hline
 4& 0.1192E-01&  2.0& 0.5139E-01&  2.0& 0.4266E-02&  2.0 \\
 5& 0.2995E-02&  2.0& 0.1292E-01&  2.0& 0.1070E-02&  2.0 \\
 6& 0.7497E-03&  2.0& 0.3235E-02&  2.0& 0.2678E-03&  2.0 \\
     \hline
\end{tabular}\end{center}
\end{table}

\begin{figure}[htb]\begin{center}\setlength\unitlength{1in}
    \begin{picture}(2.0,2.5)
 \put(0,1.6){\includegraphics[width=2.4in]{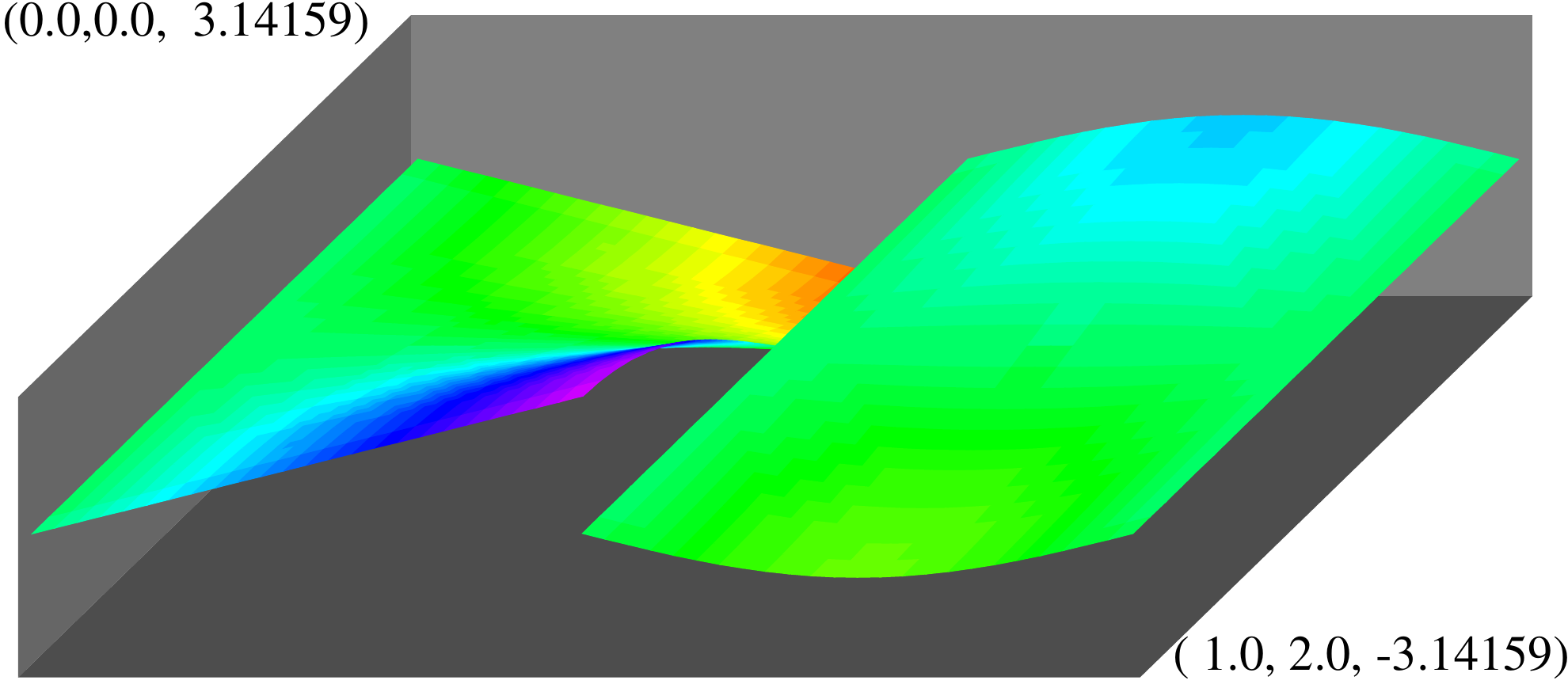}}
 \put(0,0){\includegraphics[width=2.4in]{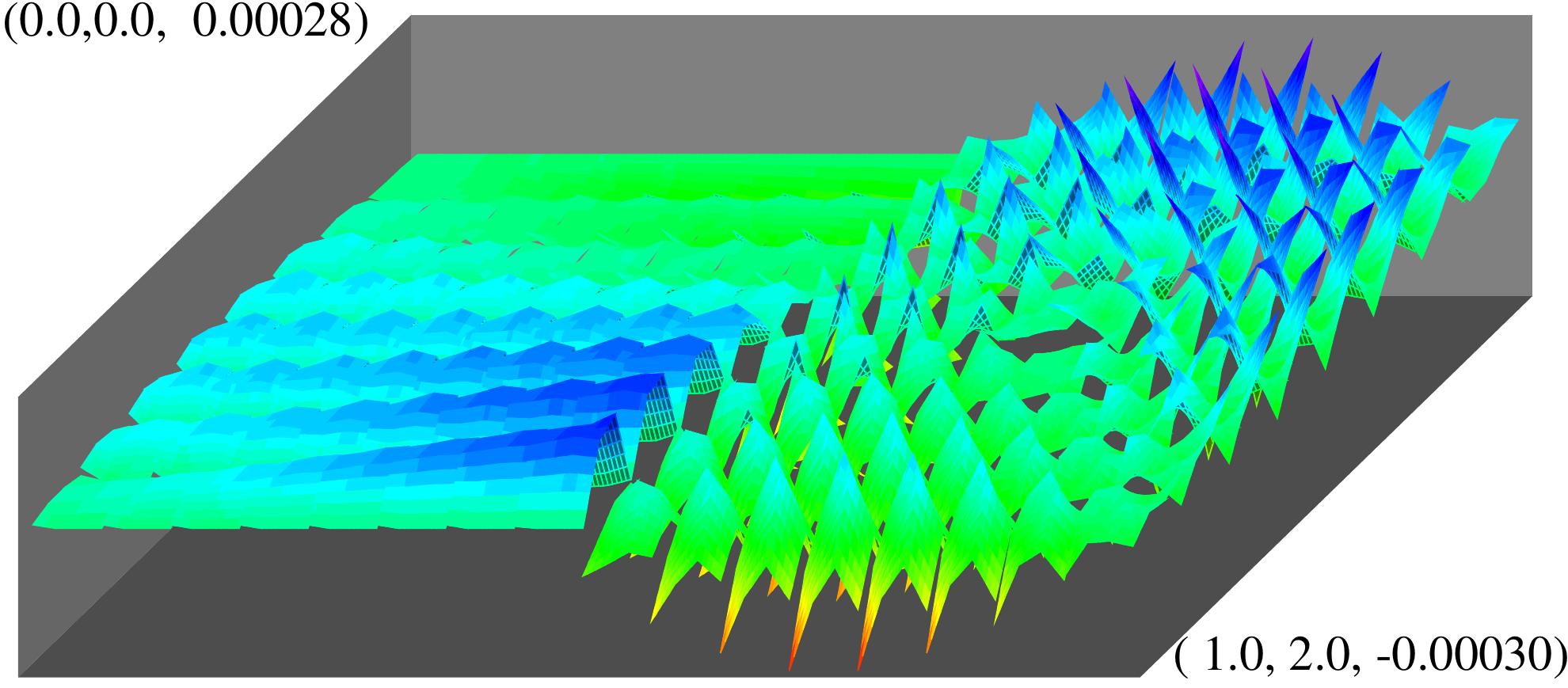}}
    \end{picture}
\caption{\label{g-u1} The solution for $(\bm u_s,\bm u_d)_1$ and its error by $P_3$ elements on level 4, for
    \eqref{s2}.  }
\end{center}
\end{figure}

\begin{table}[htb]
  \caption{ The errors  and the order $O(h^k)$ of convergence by the
   $P_3$ WG and $BDM_3$ coupled element,  for \eqref{s2}.
   \label{t23} }
\begin{center}  \begin{tabular}{c|rr|rr|rr}  %\multispan{3}
\hline
level & $ \|Q_0 \bm u_s- \bm u_{s,h}\|_0 $ & $k$ &  $ \trb Q_h \bm u_s- \bm u_{s,h} \trb $
    & $k$ & $\|R^s_h p_s-p_{s,h}\|_0 $ & $k$  \\ \hline
 3& 0.1074E-02&  3.9& 0.3643E-01&  2.9& 0.5218E-02&  3.0 \\
 4& 0.6950E-04&  3.9& 0.4757E-02&  2.9& 0.6587E-03&  3.0 \\
 5& 0.4417E-05&  4.0& 0.6074E-03&  3.0& 0.8273E-04&  3.0 \\
\hline
   & $ \|I_h \bm u_d-\bm u_{d,h}\|_0 $ & $k$ &
   \multicolumn{2}{r|}{ $ \|\operatorname{\mbox{div}} (\bm u_d-\bm u_{d,h})\|_0\; k $} &
    $ \|I_h p_d-p_{d,h}\|_0 $ &$k$  \\ \hline
 3& 0.1091E-02&  4.0& 0.1481E-01&  2.9& 0.1321E-02&  2.7 \\
 4& 0.6864E-04&  4.0& 0.1868E-02&  3.0& 0.1725E-03&  2.9 \\
 5& 0.4294E-05&  4.0& 0.2341E-03&  3.0& 0.2179E-04&  3.0 \\
     \hline
\end{tabular}\end{center}
\end{table}

\begin{figure}[htb]\begin{center}\setlength\unitlength{1in}
    \begin{picture}(2.0,2.0)
 \put(0,1.2){\includegraphics[width=2.2in]{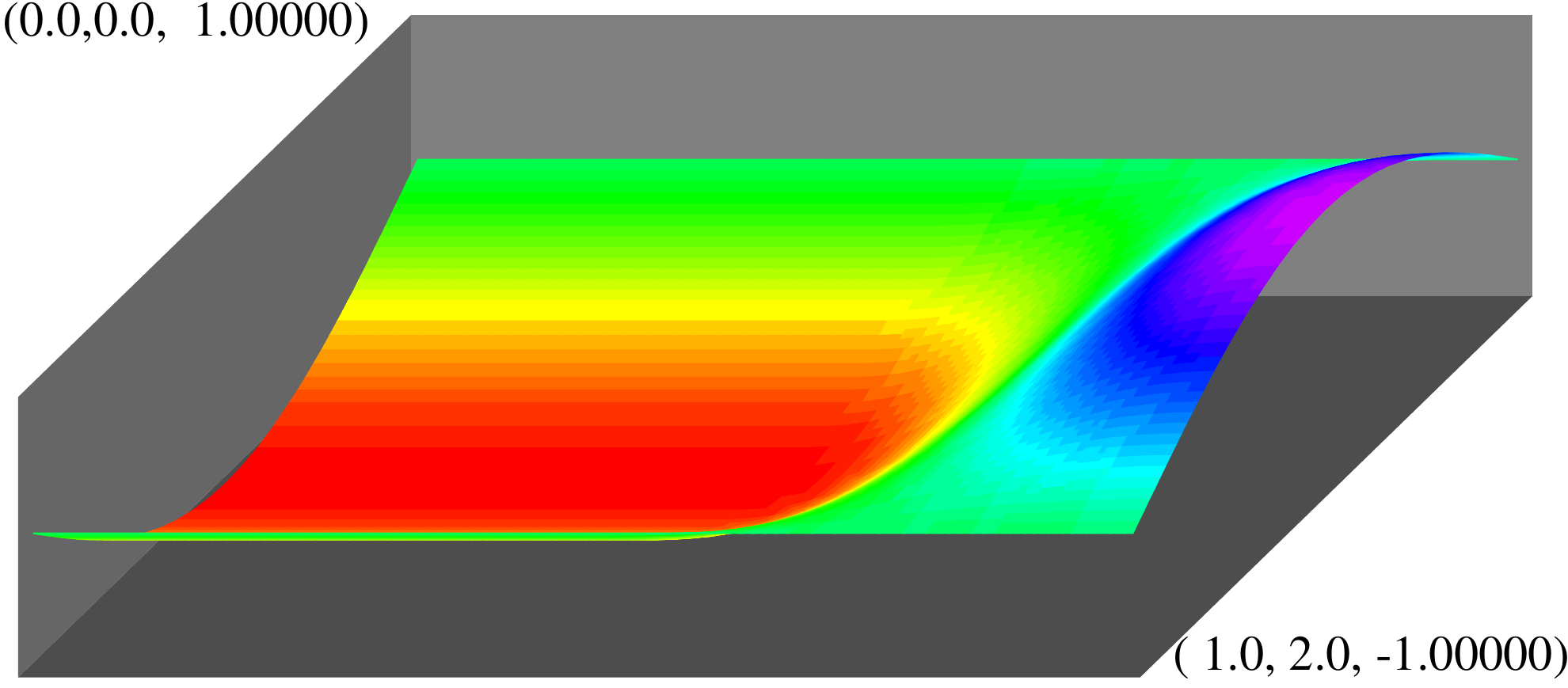}}
 \put(0,0){\includegraphics[width=2.2in]{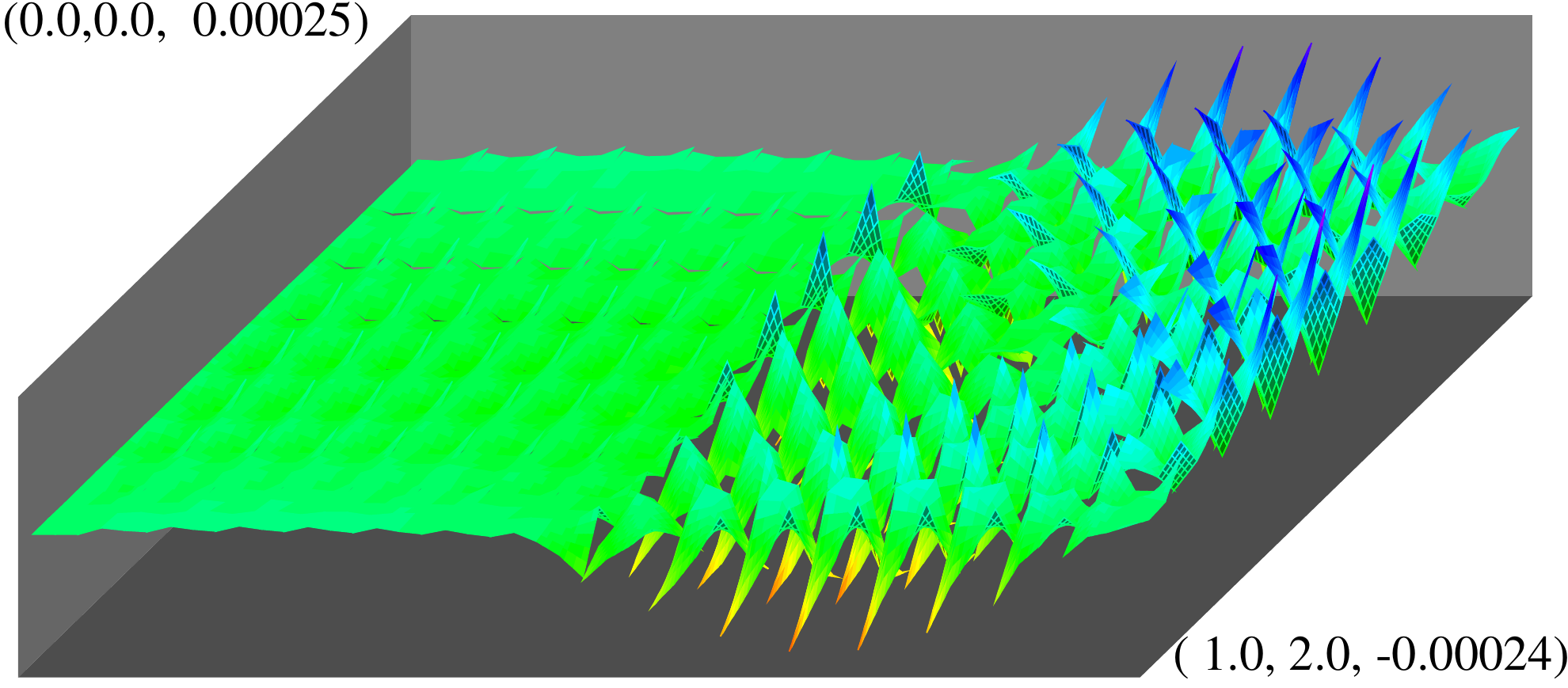}}
    \end{picture}
\caption{\label{g-u2} The solution for $(\bm u_s,\bm u_d)_2$ and its error by $P_3$ elements on level 4, for
    \eqref{s2}.  }
\end{center}
\end{figure}

  \begin{figure}[htb]\begin{center}\setlength\unitlength{1in}
   \begin{picture}(2.0,2.1)
 \put(0,1.2){\includegraphics[width=2.2in]{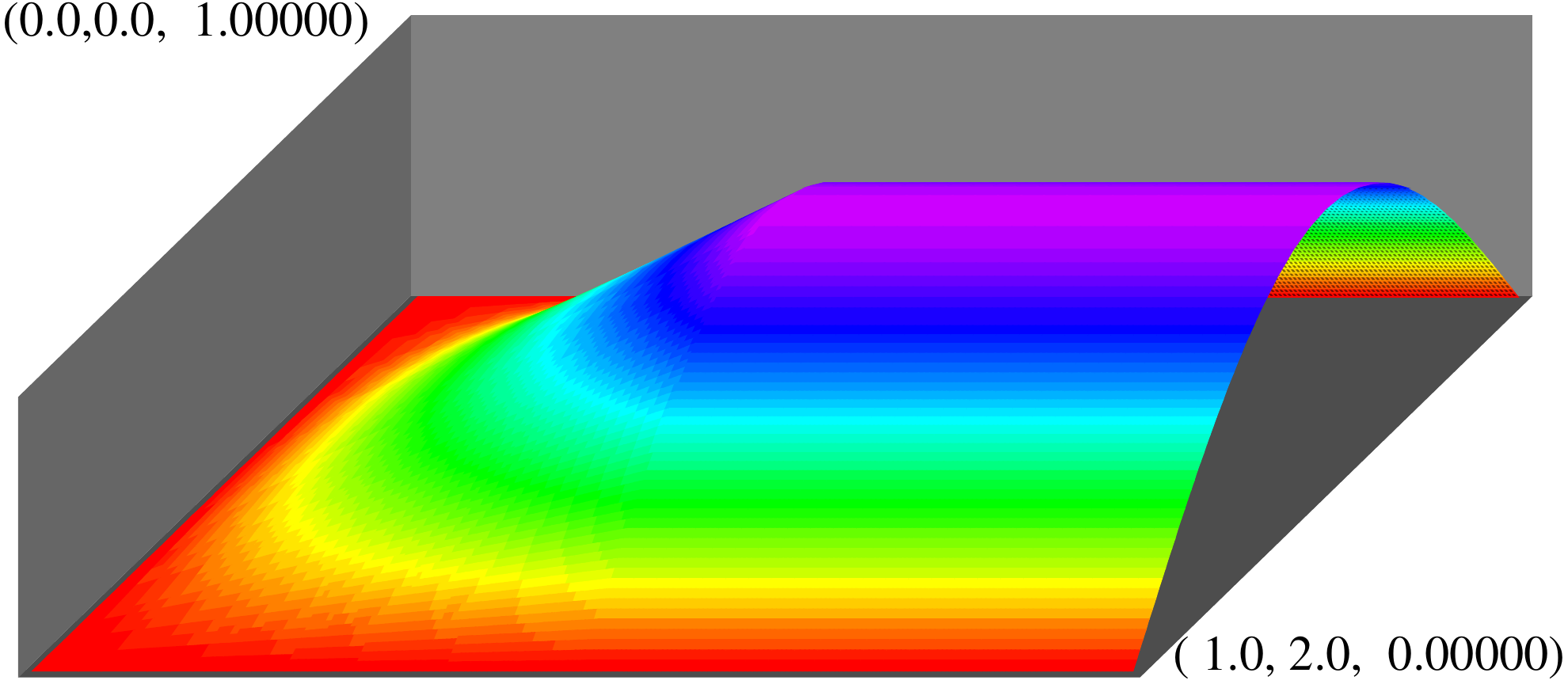}}
 \put(0,0){\includegraphics[width=2.2in]{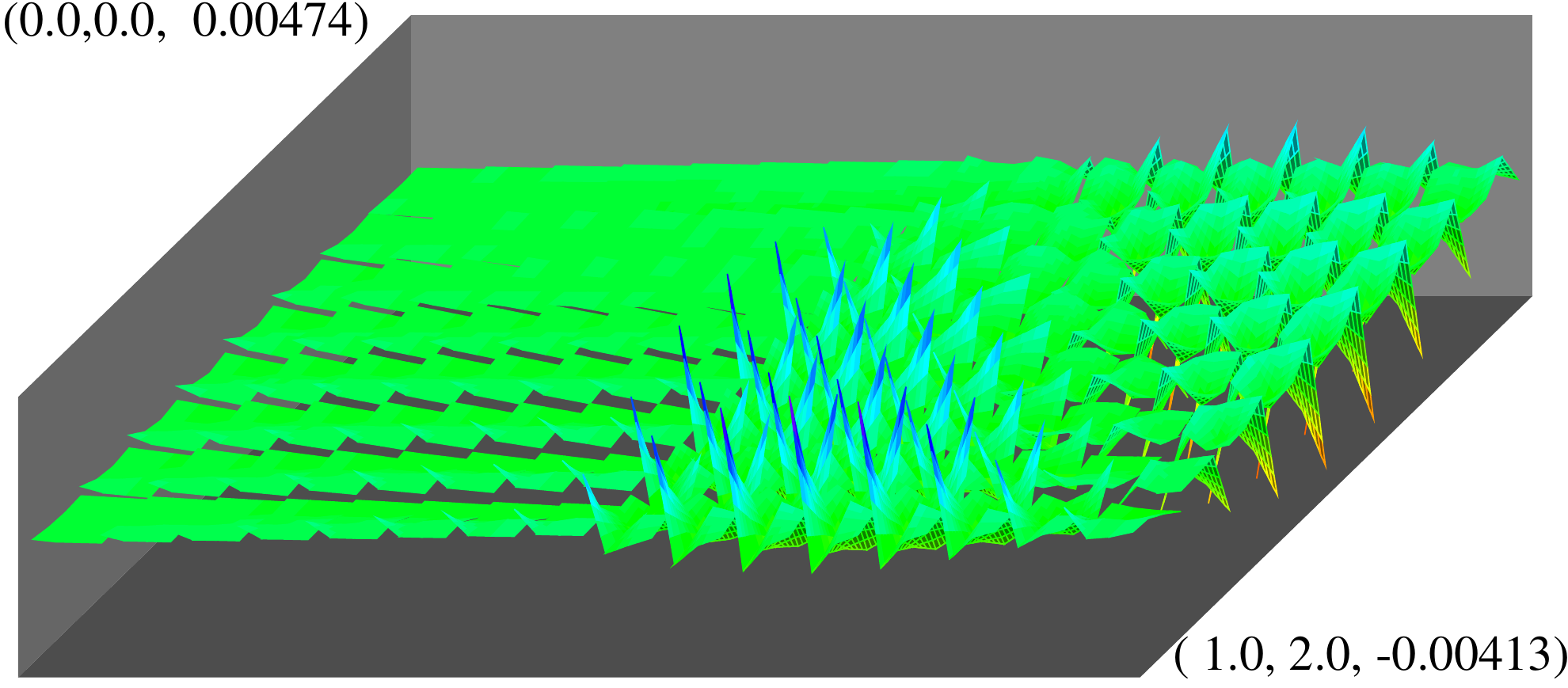}}
    \end{picture}
\caption{\label{g-p} The solution for $(p_s, p_d)$ and its error by $P_3$ elements on level 4, for
    \eqref{s2}.  }
\end{center}
\end{figure}

\begin{table}[htb]
  \caption{ The errors  and the order $O(h^k)$ of convergence by the
   $P_4$ WG and $BDM_4$ coupled element,  for \eqref{s2}.
   \label{t24} }
\begin{center}  \begin{tabular}{c|rr|rr|rr}  %\multispan{3}
\hline
level & $ \|Q_0 \bm u_s- \bm u_{s,h}\|_0 $ & $k$ &  $ \trb Q_h \bm u_s- \bm u_{s,h} \trb $
    & $k$ & $\|R^s_h p_s-p_{s,h}\|_0 $ & $k$  \\ \hline
 3& 0.1025E-03&  4.7& 0.4604E-02&  3.9& 0.5025E-03&  4.0 \\
 4& 0.4533E-05&  4.5& 0.2951E-03&  4.0& 0.3184E-04&  4.0 \\
 5& 0.2428E-06&  4.2& 0.1862E-04&  4.0& 0.1997E-05&  4.0 \\
\hline
   & $ \|I_h \bm u_d-\bm u_{d,h}\|_0 $ & $k$ &
   \multicolumn{2}{r|}{ $ \|\operatorname{\mbox{div}} (\bm u_d-\bm u_{d,h})\|_0\; k $} &
    $ \|I_h p_d-p_{d,h}\|_0 $ &$k$  \\ \hline
 3& 0.8851E-04&  4.0& 0.7961E-03&  4.0& 0.6335E-04&  4.0 \\
 4& 0.5590E-05&  4.0& 0.5017E-04&  4.0& 0.3988E-05&  4.0 \\
 5& 0.3506E-06&  4.0& 0.3142E-05&  4.0& 0.2497E-06&  4.0 \\
     \hline
\end{tabular}\end{center}
\end{table}

To see if we have different $L^2$-convergence, we compute the second example by the coupled $P_k$ WG vector and $P_k$ CG scalar elements with $k$ varying from 1 to 5. The corresponding results are recorded in Table \ref{t25}. The observed $L^2$ convergence orders of the velocity are $k+1$ for all polynomial degrees, as predicated by the theory of the $P_k$ WG elements for the Stokes equations.  In particular,  we have another order higher $L^2$-convergence for the $P_2$ element in the Darcy region. It behaves the same as in solving a pure Darcy problem.

To the best of our knowledge, there exists no general analysis for optimal error estimates of the velocity in $L^2$ norm. Fortunately, some researchers have noticed this problem and made efforts for some specific scheme (such as a monolithic strongly conservative numerical scheme) $\cite{L2-2, L2-1}$. But these work still cannot explain the
$L^2$ convergence in our study. We will explore the phenomenon in the future work.

\begin{table}[htb]
  \caption{ {\color{black}The errors  and the orders $O(h^k)$ of convergence,  for \eqref{s2}.
   \label{t25} }}
\begin{center}  \begin{tabular}{c|rr|rr|rr}  %\multispan{3}
\hline
level & $ \|Q_h \bm u_s- \bm u_{s,h}\|_0 $ & $k$
    & $\|Q_h p_s-p_{s,h}\|_0 $ & $k$
  & $ \|I_h p_d-p_{d,h}\|_0 $ & $k$ \\ \hline
  &\multicolumn{6}{c}{By coupled $P_1$ WG vector and $P_1$ CG scalar element. } \\ \hline
 4& 0.3625E-01&  1.8& 0.1223E+00&  0.9& 0.8650E-02&  1.9\\
 5& 0.9363E-02&  2.0& 0.6141E-01&  1.0& 0.2181E-02&  2.0\\
 6& 0.2361E-02&  2.0& 0.3059E-01&  1.0& 0.5441E-03&  2.0\\
\hline
  &\multicolumn{6}{c}{By coupled $P_2$ WG vector and $P_2$ CG scalar element. } \\ \hline
 4& 0.1682E-02&  2.9& 0.1261E-01&  2.0& 0.4920E-04&  3.7 \\
 5& 0.2126E-03&  3.0& 0.3138E-02&  2.0& 0.3755E-05&  3.7 \\
 6& 0.2664E-04&  3.0& 0.7792E-03&  2.0& 0.2994E-06&  3.6 \\
\hline
  &\multicolumn{6}{c}{By coupled $P_3$ WG vector and $P_3$ CG scalar element. } \\ \hline
 4& 0.6905E-04&  4.0& 0.6543E-03&  3.0& 0.2390E-05&  4.1 \\
 5& 0.4374E-05&  4.0& 0.8262E-04&  3.0& 0.1435E-06&  4.1 \\
 6& 0.2748E-06&  4.0& 0.1037E-04&  3.0& 0.8798E-08&  4.0 \\
\hline
  &\multicolumn{6}{c}{By coupled $P_4$ WG vector and $P_4$ CG scalar element. } \\ \hline
 3& 0.8282E-04&  4.9& 0.4249E-03&  3.8& 0.3119E-05&  5.0\\
 4& 0.2650E-05&  5.0& 0.2744E-04&  4.0& 0.9716E-07&  5.0\\
 5& 0.8342E-07&  5.0& 0.1731E-05&  4.0& 0.3025E-08&  5.0\\
\hline
  &\multicolumn{6}{c}{By coupled $P_5$ WG vector and $P_5$ CG scalar element. } \\ \hline
 3& 0.5416E-05&  5.9& 0.2996E-04&  4.9& 0.1474E-06&  6.0 \\
 4& 0.8596E-07&  6.0& 0.9595E-06&  5.0& 0.2286E-08&  6.0 \\
 5& 0.1349E-08&  6.0& 0.3020E-07&  5.0& 0.3557E-10&  6.0 \\
     \hline
\end{tabular}\end{center}
\end{table}

\section{Conclusion}
{\color{black}In this paper, the weak Galerkin finite element method coupled with the mixed finite element method is introduced for the Stokes-Darcy problem. We designed the numerical scheme and derived the optimal error estimates in broken $H^1$ norm for velocity and in $L^2$ for pressure. We found that the convergence order of velocity in $L^2$ norm is not always optimal form the numerical experiments. This phenomenon is strange and it will be studied in the following work.}

\begin{appendix}
\section{Some Technique Tools}
In this Appendix, we are going to introduce some technical results which have been used in previous section to derive error estimates.
\begin{lemma}\label{interpolation} Let $\T_{s,h}$ be a finite element partition of
domain $\Omega_s$ satisfying the shape regularity assumptions as
specified in \cite{Wang2014a}, we assume $\bw$ and
$\rho$ are sufficiently smooth. Then, for $0\le m\le 1$
we have

\begin{eqnarray}\label{pro-est1}
&&\sumT h^{2m}_{T_s}\|\bw-Q_0\bw\|^2_{T_s}\le Ch^{2(r+1)}_{T_s}\|\bw\|^2_{T_s,r+1}, \qquad 1\leq r\leq \alpha_s,
\\
\label{pro-est4}&&\sumT h^{2m}_{T_s}\|\bw-Q_b\bw\|^2_{e_s}\le Ch^{2(r+1)}_{T_s}\|\bw\|^2_{e_s,r+1}, \qquad 1\leq r\leq \beta,
\\
\label{pro-est2}&&\sumT h^{2m}_{T_s}\|\nabla\bw-\bQ_h(\nabla\bw)\|^2_{T_s}\le Ch^{2r}_{T_s}\|\bw\|^2_{T_s,r+1}, \qquad 1\leq r\leq \beta,
\\
\label{pro-est3}&&\sumT h^{2m}_{T_s}\|\rho-\dQ_h\rho\|^2_{T_s}\le Ch^{2r}_{T_s}\|\rho\|^2_{T_s,r}, \qquad 1\leq r\leq \beta.
\end{eqnarray}
Here $C$ denotes a generic constant independent of the mesh size
$h$ and the functions in the estimates.
\end{lemma}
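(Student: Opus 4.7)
The four estimates are classical approximation results for $L^2$ projections onto piecewise polynomial spaces, so my plan is to reduce everything to the Bramble–Hilbert lemma on a reference element combined with a standard scaling argument, together with the trace inequality for the edge bound.

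First I would fix an element $T_s \in \T_{s,h}$ and introduce an affine map $F_{T_s}:\hat T\to T_s$ from a reference element $\hat T$ of unit diameter. Since the WG-regularity assumption from \cite{Wang2014a} guarantees a uniformly shape-regular family of elements, the Jacobians of $F_{T_s}$ and $F_{T_s}^{-1}$ scale like positive powers of $h_{T_s}$ with constants depending only on the shape-regularity parameter. For \eqref{pro-est1}, I would pull back $\bw$ to $\hat\bw = \bw\circ F_{T_s}$, observe that the pulled-back $L^2$ projection onto $[P_{\alpha_s}(\hat T)]^2$ commutes with the affine pullback, and apply Bramble–Hilbert on $\hat T$: for any $r$ with $1\le r\le \alpha_s$, we get $\|\hat\bw - \hat Q_0\hat\bw\|_{\hat T}\le C|\hat\bw|_{r+1,\hat T}$. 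Scaling back to $T_s$ yields $\|\bw-Q_0\bw\|_{T_s}\le Ch_{T_s}^{r+1}|\bw|_{r+1,T_s}$; multiplying by $h_{T_s}^{2m}$ and summing over $T_s$ produces \eqref{pro-est1}. Exactly the same reasoning (with the projection onto the appropriate polynomial space) gives \eqref{pro-est2} for $\bQ_h$ applied to $\nabla\bw$ (note the one-order loss because we approximate the gradient), and \eqref{pro-est3} for $\dQ_h$ applied to the scalar $\rho$.

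The estimate \eqref{pro-est4} for the edge projection $Q_b$ requires slightly more care, since the norm is on $e_s$ rather than on $T_s$. Here I would use the standard trace inequality
\begin{eqnarray*}
\|\varphi\|_{e_s}^2 \;\le\; C\bigl(h_{T_s}^{-1}\|\varphi\|_{T_s}^2 + h_{T_s}\|\nabla\varphi\|_{T_s}^2\bigr),
\end{eqnarray*}
applied to $\varphi = \bw - Q_0\bw$, and then bound $\|\bw-Q_b\bw\|_{e_s}\le\|\bw-Q_0\bw\|_{e_s}$ by the definition of $Q_b$ as the best $L^2$ approximation on $e_s$. Combining this with \eqref{pro-est1} and the gradient analogue $\|\nabla(\bw-Q_0\bw)\|_{T_s}\le Ch_{T_s}^{r}|\bw|_{r+1,T_s}$ (itself proved by the same reference-element/Bramble–Hilbert argument) yields the required $h^{r+1}$ rate on the edge norm.

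No step is really a serious obstacle here; everything reduces to well-known facts (Bramble–Hilbert, affine scaling, trace inequality) on shape-regular partitions. The only mild care is bookkeeping the admissible ranges of $r$ dictated by the polynomial degrees $\alpha_s$ and $\beta$ in the definitions of $Q_0$, $Q_b$, $\bQ_h$ and $\dQ_h$, and ensuring that the stated factor $h_{T_s}^{2m}$ with $0\le m\le 1$ is harmless (it only lowers the convergence order and can be absorbed into the constant). Because these estimates are entirely standard, I would actually expect the paper to cite them and give, at most, a brief sketch rather than a full proof.
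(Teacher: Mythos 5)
The paper gives no proof of this lemma at all: it is stated in the appendix as a collection of standard $L^2$-projection approximation results, exactly as you anticipated in your closing remark. Your Bramble--Hilbert/affine-scaling argument is the standard and correct way to establish \eqref{pro-est1}, \eqref{pro-est2} and \eqref{pro-est3}, and the bookkeeping of the admissible ranges of $r$ against the polynomial degrees is indeed the only point requiring attention there.

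One step in your treatment of \eqref{pro-est4} does not hold as written. You invoke $\|\bw-Q_b\bw\|_{e_s}\le\|\bw-Q_0\bw\|_{e_s}$ ``by the definition of $Q_b$ as the best $L^2$ approximation on $e_s$,'' but this requires the trace $Q_0\bw|_{e_s}$ to lie in $[P_{\beta}(e_s)]^2$, and the paper allows $\alpha_s=\beta+1$, in which case it does not. The fix is routine: since $r\le\beta$, compare $Q_b\bw$ instead with (the trace of) a degree-$r$ polynomial approximant of $\bw$ on $T_s$, for which the best-approximation property does apply, and then use the trace inequality as you propose. Note also that the route through the trace inequality naturally produces a bound of order $h^{r+1/2}$ times an \emph{element} Sobolev norm, whereas \eqref{pro-est4} as stated carries the Sobolev norm on the edge $e_s$ itself and rate $h^{r+1}$; the latter form follows more directly from a one-dimensional Bramble--Hilbert argument on $e_s$, and it is this edge-norm version that the paper actually uses later (in the estimate of $l_3$). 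Either variant serves the subsequent error analysis, but you should state which one you are proving.
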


\begin{lemma}\label{MFE-pro}
$\Pi^d_h$ satisfies the approximation properties
\begin{eqnarray}
\|\bv_d-\Pi^d_h\bv_d\|_0,T&\leq& Ch^m_{T_d}|\bv_d|_{m,T_d},\qquad 1\leq~m\leq~\alpha_d+1,\\
\|\nabla\cdot(\bv_d-\Pi^d_h\bv_d)\|_0,T&\leq& Ch^m_{T_d}|\nabla\cdot\bv_d|_{m,T_d},\qquad 0\leq~m\leq~\gamma_d+1.
\end{eqnarray}
\end{lemma}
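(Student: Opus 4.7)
The plan is to prove both estimates by the classical scaling plus Bramble--Hilbert argument that underlies the approximation theory for all $H(\mathrm{div})$-conforming interpolants in the RT/BDM/BDFM families. I would first reduce to a reference element $\hat T$ via the Piola transform $\bv_d(x)=(\det J_T)^{-1}J_T\hat\bv(\hat x)$, where $J_T$ is the Jacobian of the affine (or bilinear, on rectangles) mapping $\hat T\to T_d$. The Piola transform is the natural vehicle here because it preserves the $H(\mathrm{div})$ structure: it commutes with the divergence, intertwines $\Pi^d_h$ and $\hat\Pi^d_h$, and carries edge normal traces to edge normal traces, so the degrees of freedom $(\ref{MFE-pro1})$--$(\ref{MFE-pro2})$ defining $\Pi^d_h$ are mapped to the analogous degrees of freedom on $\hat T$. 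Standard scaling identities then yield
\[
\|\bv_d-\Pi^d_h\bv_d\|_{0,T_d}\le C\,h_{T_d}\,\|\hat\bv-\hat\Pi^d_h\hat\bv\|_{0,\hat T},
\]
and a completely analogous identity for the divergence.

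On the reference element, $\hat\Pi^d_h$ is a bounded linear operator from $[H^s(\hat T)]^2$ (with $s>1/2$, so that edge traces in $L^2$ make sense) into $[P_{\alpha_d}(\hat T)]^2$, and by construction it reproduces every polynomial in $[P_{\alpha_d}(\hat T)]^2$. Hence the Bramble--Hilbert lemma gives $\|\hat\bv-\hat\Pi^d_h\hat\bv\|_{0,\hat T}\le C|\hat\bv|_{m,\hat T}$ for $1\le m\le \alpha_d+1$; scaling back to $T_d$ produces the first estimate.

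For the second estimate, the crucial ingredient is the commuting diagram property $\nabla\cdot(\Pi^d_h\bv_d)=R^d_h(\nabla\cdot\bv_d)$, which is immediate from $(\ref{MFE-pro1})$ together with the inclusion $\nabla\cdot V^d_h\subset M^d_h$ assumed in the paper. Consequently
\[
\|\nabla\cdot(\bv_d-\Pi^d_h\bv_d)\|_{0,T_d}=\|\nabla\cdot\bv_d-R^d_h(\nabla\cdot\bv_d)\|_{0,T_d},
\]
and the right-hand side is bounded by $C\,h_{T_d}^m|\nabla\cdot\bv_d|_{m,T_d}$ for $0\le m\le\gamma_d+1$ by another routine application of Bramble--Hilbert to the local $L^2$-projection onto $P_{\gamma_d}(T_d)$.

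The only genuinely delicate point is verifying that $\Pi^d_h$ is well defined on the regularity class used: the face degrees of freedom $\int_e(\bv_d\cdot\bn)(\bw_{d,h}\cdot\bn)\,ds$ require $\bv_d\cdot\bn$ to lie in $L^2(\partial T_d)$, which is guaranteed by $\bv_d\in[H^\theta(T_d)]^2$ with $\theta>1/2$ (and is automatic under the assumptions in force in Section 4). Everything else is the textbook argument and can be extracted from the reference \cite{MFE2} cited in the statement.
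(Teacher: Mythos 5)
Your proof is correct and is exactly the standard Piola-transform/Bramble--Hilbert/commuting-diagram argument; the paper itself offers no proof of this lemma, simply citing Brezzi--Fortin \cite{MFE2}, which contains precisely the argument you reconstruct (reference-element mapping for the $L^2$ bound, and $\nabla\cdot\Pi^d_h\bv_d=R^d_h(\nabla\cdot\bv_d)$ reducing the divergence bound to $L^2$-projection error). One minor bookkeeping remark: in two dimensions the Piola transform scales the $L^2$ norm by a factor of order one rather than $h_{T_d}$, the full power $h_{T_d}^m$ coming instead from the seminorm scaling $|\hat\bv|_{m,\hat T}\le Ch_{T_d}^m|\bv_d|_{m,T_d}$, so your intermediate inequality misallocates one power of $h$ between the two scaling steps without affecting the final estimate.
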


\begin{lemma}
Let $p|_{\Omega_s}\in H^{\gamma_s}(\Omega_s)$, $p|_{\Omega_d}\in H^{\gamma d}(\Omega_d)$, then we have
\begin{eqnarray}
\|p-R_hp\|_{m,T}&\leq& Ch^{\gamma_s-m}|p|_{\gamma_s,T}\qquad T\in \Omega_s, ~m=0,1,\\
\label{p-pro}\|p-R_hp\|_{m,T}&\leq& Ch^{\gamma_d-m}|p|_{\gamma_d,T}\qquad T\in \Omega_d, ~m=0,1.
\end{eqnarray}
\end{lemma}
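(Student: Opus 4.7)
The plan is to prove this by a standard Bramble--Hilbert / scaling argument, using only that $R_h^s$ (resp.\ $R_h^d$) is the $L^2$-orthogonal projection onto a space that contains all polynomials of degree at most $\gamma_s - 1$ (resp.\ $\gamma_d - 1$) on each element. Since the two cases are identical, I will describe the argument for an element $T\in \T_{s,h}$ in $\Omega_s$; the Darcy estimate follows verbatim with $\gamma_s$ replaced by $\gamma_d$.

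First I would pass to the reference element. Let $F_T:\hat T \to T$ be an affine (or bi-affine, for rectangles) mapping of size $h_T$, and write $\hat p = p \circ F_T$ and $\hat R \hat p = (R_h p)\circ F_T$. Since $L^2$-projection commutes with invertible affine change of variables (up to the constant Jacobian), $\hat R$ is the $L^2(\hat T)$-projection onto $P_{\gamma_s}(\hat T)$, and in particular it preserves every $\hat q \in P_{\gamma_s - 1}(\hat T)$.

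Next I would prove the reference-element bound
\begin{equation*}
\|\hat p - \hat R \hat p\|_{m,\hat T} \;\le\; C\,|\hat p|_{\gamma_s,\hat T}, \qquad m=0,1.
\end{equation*}
For this, pick any $\hat q \in P_{\gamma_s - 1}(\hat T)$; then $\hat p - \hat R \hat p = (\hat p-\hat q) - \hat R(\hat p-\hat q)$, so
\begin{equation*}
\|\hat p - \hat R \hat p\|_{m,\hat T} \;\le\; \|\hat p-\hat q\|_{m,\hat T} + \|\hat R(\hat p-\hat q)\|_{m,\hat T}.
\end{equation*}
The first term is controlled by the usual Deny--Lions / Bramble--Hilbert lemma: choosing $\hat q$ to be an averaged Taylor polynomial yields $\|\hat p-\hat q\|_{m,\hat T}\le C|\hat p|_{\gamma_s,\hat T}$ for $m=0,1$. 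The second term is controlled by $L^2$-boundedness of $\hat R$ (immediate for $m=0$) and by norm equivalence on the finite-dimensional space $P_{\gamma_s}(\hat T)$ (which upgrades the $L^2$-bound to an $H^1$-bound for $m=1$), giving $\|\hat R(\hat p-\hat q)\|_{m,\hat T}\le C\|\hat p-\hat q\|_{0,\hat T}\le C|\hat p|_{\gamma_s,\hat T}$.

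Finally I would scale back to $T$. The standard shape-regular scaling identities give $\|v\|_{m,T} \simeq h_T^{m - d/2}\|\hat v\|_{m,\hat T}$ and $|\hat p|_{\gamma_s,\hat T}\simeq h_T^{\gamma_s - d/2}|p|_{\gamma_s,T}$, so combining these with the reference-element estimate yields
\begin{equation*}
\|p - R_h p\|_{m,T} \;\le\; C\, h_T^{\gamma_s - m}\, |p|_{\gamma_s,T}, \qquad m=0,1,
\end{equation*}
which is the claim on $\Omega_s$. There is no genuine obstacle here: the only thing to be slightly careful about is the $H^1$-stability of the $L^2$ projection in step two, which on a single reference element is harmless (it follows from norm equivalence on a finite-dimensional space), and the verification that $M_h^d|_T = P_{\gamma_d}(T)$ likewise contains $P_{\gamma_d - 1}(T)$ so that the same argument applies in $\Omega_d$.
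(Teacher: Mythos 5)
The paper states this lemma in the appendix as a standard technical tool and gives no proof of its own, so there is nothing to compare against line by line; your Bramble--Hilbert/scaling argument is the canonical way to prove it, and it is essentially correct. Two remarks. First, a slip in the scaling step: you write $\|v\|_{m,T}\simeq h_T^{m-d/2}\|\hat v\|_{m,\hat T}$, but the correct identity is $|v|_{m,T}\simeq h_T^{d/2-m}|\hat v|_{m,\hat T}$ (equivalently $|\hat v|_{m,\hat T}\simeq h_T^{m-d/2}|v|_{m,T}$, which is the form you correctly use for the $|\hat p|_{\gamma_s,\hat T}$ seminorm). With the identity exactly as you wrote it the exponents combine to $h_T^{\gamma_s+m-d}$ rather than $h_T^{\gamma_s-m}$, so the direction of that equivalence needs to be fixed; your final conclusion is nevertheless the right one, so this is a typo rather than a conceptual error. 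Second, and more substantively: the reduction to a fixed reference element via an affine map is only available on $\T_{d,h}$, which the paper takes to consist of triangles or rectangles. The Stokes partition $\T_{s,h}$ is a general WG-regular polygonal mesh, for which there is no single reference element; there you should either run the averaged-Taylor-polynomial (Deny--Lions) argument directly on the physical element $T$ with constants controlled by shape regularity, or simply invoke the projection estimates for shape-regular polytopal partitions proved in the paper's reference \cite{Wang2014a}, which is evidently what the authors intend. Finally, note the degenerate case $\gamma_s=0$ (permitted when $\beta=1$): then $P_{\gamma_s-1}=\{0\}$ and the $m=0$ bound reduces to plain $L^2$-stability, while the $m=1$ bound should be read with the usual caveat that $p$ must at least lie in $H^1$ for the left-hand side to make sense. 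None of these points undermines the proof; the core argument is sound.
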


Let $T_s$ be an element satisfying the assumption verified in \cite{Wang2014a}
with $e_s$ as a side. For any function $g\in H^1(T)$, the
following trace inequality has been proved in \cite{Wang2014a}
\begin{eqnarray}\label{trace-thm}
\|g\|_{e_s}^2\le C(h_{T_s}^{-1}\|g\|_{T_s}^2+h_{T_s}\|\nabla g\|_{T_s}^2).
\end{eqnarray}
Particularly, if $g$ is polynomial in $T$ we have the
inverse inequality \cite{Wang2014a}
\begin{eqnarray}\label{inverse-thm}
\|\nabla g\|_{T_s}^2\le C h_{T_s}^{-2}\|g\|_{T_s}^2,
\end{eqnarray}
where $C$ is a constant only related to the degree of polynomial
and the dimension.
Combining with the trace inequality we can get further that
\begin{eqnarray}\label{trinv-thm}
\|\nabla g\|_e^2\le C h_{T_s}^{-1}\|g\|_{T_s}^2.
\end{eqnarray}
The vector version of the trace theorem and the inverse
theorem are trivial.
\begin{lemma}
For any $\bv_{_{s,h}}\in V^s_h$, we have
\begin{eqnarray}\label{rem-est1}
\sumTa \|\bv_{_{s,0}}-\bv_{{s,b}}\|_{\partial T_{_s}}&\le&C h^{\frac12}_{s}\|\bv_{s,h}\|_{V^s_h}.
\end{eqnarray}
\end{lemma}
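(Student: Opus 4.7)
The natural plan is to route through the boundary projection $Q_b \bv_{s,0}$ and split
\[
\bv_{s,0} - \bv_{s,b} \;=\; \bigl(\bv_{s,0} - Q_b \bv_{s,0}\bigr) \,+\, \bigl(Q_b \bv_{s,0} - \bv_{s,b}\bigr),
\]
estimating the two pieces separately. I will read the stated inequality in the $\ell^2$-over-elements sense (squared summands under a square root); that is the scaling the factor $h_s^{1/2}$ on the right demands, and it is exactly what the proof of Theorem \ref{th6-1} uses.

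The stabilizer piece is essentially free, because $\|\cdot\|_{V^s_h}$ already controls $\sum_{T_s} h_{T_s}^{-1}\|Q_b\bv_{s,0}-\bv_{s,b}\|_{\partial T_s}^2$. Writing $\|Q_b\bv_{s,0}-\bv_{s,b}\|_{\partial T_s}^2 = h_{T_s}\cdot h_{T_s}^{-1}\|Q_b\bv_{s,0}-\bv_{s,b}\|_{\partial T_s}^2$, summing over $T_s \in \T_{s,h}$, and bounding $h_{T_s} \leq h_s$, I get
\[
\sum_{T_s \in \T_{s,h}} \|Q_b \bv_{s,0} - \bv_{s,b}\|_{\partial T_s}^2 \;\leq\; h_s\,\|\bv_{s,h}\|_{V^s_h}^2.
\]

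For the projection piece, the key is that, since $\beta \geq 1$, constants belong to $[P_\beta(e_s)]^2$ and are therefore reproduced by the edge projection $Q_b$. By the best-approximation property of $L^2$ projection,
\[
\|\bv_{s,0} - Q_b \bv_{s,0}\|_{e_s} \;\leq\; \|\bv_{s,0} - \mathbf{c}\|_{e_s}
\]
for any constant vector $\mathbf{c}$; I take $\mathbf{c}$ to be the mean of $\bv_{s,0}$ on $T_s$. Applying the trace inequality $(\ref{trace-thm})$ to $\bv_{s,0}-\mathbf{c}$ and then Poincaré on $T_s$ collapses the right-hand side to $C\,h_{T_s}^{1/2}\|\nabla \bv_{s,0}\|_{T_s}$. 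Squaring, summing over the (bounded number of) edges of $T_s$ and then over elements, and invoking the discrete Korn inequality in Lemma \ref{lemma-korn} — whose proof actually establishes the $\ell^2$ form I need — gives
\[
\sum_{T_s} \|\bv_{s,0} - Q_b \bv_{s,0}\|_{\partial T_s}^2 \;\leq\; C\,h_s \sum_{T_s}\|\nabla\bv_{s,0}\|_{T_s}^2 \;\leq\; C\,h_s\,\|\bv_{s,h}\|_{V^s_h}^2.
\]
Adding the two contributions and taking a square root yields the claim.

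The only genuine obstacle is spotting the best-approximation/constant-preservation reduction on each edge that lets me trade a projection error for a local gradient norm with the correct $h_{T_s}^{1/2}$ scaling; once that step is in place the rest is a standard chain of trace, Poincaré, and discrete Korn. The more cosmetic point to watch is that this lemma and Lemma \ref{lemma-korn} are written with $\ell^1$-looking sums but are genuinely $\ell^2$ statements, and one must consistently square before summing.
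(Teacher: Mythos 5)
Your proposal is correct and follows essentially the same route as the paper: the same splitting through $Q_b\bv_{s,0}$, with the stabilizer term absorbed directly into the $\|\cdot\|_{V^s_h}$ norm at the cost of a factor $h_s^{1/2}$, and the projection term reduced via trace, Poincar\'e, and the discrete Korn inequality of Lemma \ref{lemma-korn}. Your remark that the statement must be read in the squared/$\ell^2$ sense matches how the estimate is actually invoked later (e.g.\ in the proof of Lemma \ref{equ-err}), and the only cosmetic difference is that the paper dispenses with the case $\alpha_s=\beta$ separately (where $Q_b\bv_{s,0}=\bv_{s,0}$ on each edge) while you treat both cases uniformly.
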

\begin{proof}
When $\alpha_s=\beta$, \eqref{rem-est1} is obvious.
So we only need to discuss the case that $\alpha_s=\beta+1$.
We only consider the vector valued function $\bv_{_{S,h}}$. From Lemma $\ref{lemma-korn}$ we have
\begin{eqnarray}\label{formula1}
\sumTa \|\nabla\bv_{_{s,0}}\|_{T_{s}}\le C\|\bv_{s,h}\|_{V^s_h}.
\end{eqnarray}
Using the trace inequality $\ref{trace-thm}$ and $Poincar\acute{e}$ inequality, we can obtain that
\begin{eqnarray*}
\sumTa\|\bv_{{s,0}}-\bv_{{s,b}}\|_{\partial T_s}
&\le&\sumTa\|\bv_{{s,0}}-Q_b\bv_{{s,0}}\|_{\partial T_s}
+\sumTa\|Q_b\bv_{{s,0}}-\bv_{{s,b}}\|_{\partial T_s}
\\
&\le& C\sumTa h_{{T_s}}\|\nabla\bv_{{s,0}}\|_{\partial T_s}
+\sumTa\|Q_b\bv_{{s,0}}-\bv_{{s,b}}\|_{\partial T_s}
\\
&\le& C h_{s}^{\frac12}\sumTa\|\nabla\bv_{_{s,0}}\|_{T_s}
+h_{_s}^{\frac12}\sumTa h_{{T_s}}^{-\frac12}\|Q_b\bv_{_{s,0}}-\bv_{_{s,b}}\|_{\partial T_s}
\\
&\le& C h_{s}^{\frac12}\|\bv_{{s,h}}\|_{V^s_h},
\end{eqnarray*}
which completes the proof.
\end{proof}

\begin{lemma}\label{equ-err}
Let $\bw|_{\Omega_s}\in [H^{\alpha_s}(\Omega_s)]^2$, $\rho|_{\Omega_s}\in H^{\gamma_s}(\Omega_s)$, $i=s,~d$, and $\bv\in V_{s,h}$. Assume that the finite element partition $\mathcal{T}_{s,h}$ is shape regular. Then we have the  following estimates
\begin{eqnarray}
\label{l_1}l_1(\bw_s,\bv_{s,h})&\leq&Ch_s^{\beta+1}\|\bw_s\|_{\beta+2,\Omega_s}\|\bv_{s,h}\|_{V^s_h},\\
\label{l_2}l_2(\rho_s,\bv_{s,h})&\leq&Ch_s^{\gamma_s+1}\|\rho_s\|_{\gamma_s+1}\|\bv_{s,h}\|_{V^s_h},\\
\label{l_3}l_3(\bw_s,\bv_{s,h})&\leq&Ch^{\beta+1}\|\bw_s\|_{\beta+1,\Gamma}\|\bv_{s,h}\|_{V^s_h},\\
\label{stab}s(Q_h\bw_s,\bv_{s,h})&\leq& Ch_s^{\alpha_s}\|\bw_s\|_{\alpha_s+1}\|\bv_{s,h}\|_{V^s_h}.
\end{eqnarray}
\end{lemma}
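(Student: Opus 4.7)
The plan is to establish each of the four bounds using the standard WG toolkit: element-wise Cauchy--Schwarz on $\partial T_s$, the projection approximation estimates from Lemma \ref{interpolation}, the trace inequality (\ref{trace-thm}), Lemma \ref{lemma-korn} to control $\sum_{T_s}\|\nabla\bv_{s,0}\|_{T_s}$ by $\|\bv_{s,h}\|_{V^s_h}$, and the direct absorption of the quantity $h_{T_s}^{-1/2}\|Q_b\bv_{s,0}-\bv_{s,b}\|_{\partial T_s}$ into $\|\bv_{s,h}\|_{V^s_h}$.

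The estimate for $l_1$ is the delicate one and dictates the strategy. I would insert $Q_b\bv_{s,0}$ to split
\begin{eqnarray*}
l_1(\bw_s,\bv_{s,h}) &=& 2\nu\sumTa\langle \bv_{s,0}-Q_b\bv_{s,0},\;(D(\bw_s)-\bQ_h D(\bw_s))\cdot\bn\rangle_{\partial T_s}\\
&&+\;2\nu\sumTa\langle Q_b\bv_{s,0}-\bv_{s,b},\;(D(\bw_s)-\bQ_h D(\bw_s))\cdot\bn\rangle_{\partial T_s}.
\end{eqnarray*}
For the first piece I would use the edge Poincar\'e-type bound $\|\bv_{s,0}-Q_b\bv_{s,0}\|_{\partial T_s}\le Ch_{T_s}^{1/2}\|\nabla\bv_{s,0}\|_{T_s}$ together with the trace inequality and (\ref{pro-est2}) to obtain $\|D(\bw_s)-\bQ_h D(\bw_s)\|_{\partial T_s}\le Ch_{T_s}^{\beta+1/2}\|\bw_s\|_{\beta+2,T_s}$, then conclude via Lemma \ref{lemma-korn}. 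For the second piece I would bound it directly by $h_{T_s}^{-1/2}\|Q_b\bv_{s,0}-\bv_{s,b}\|_{\partial T_s}\cdot h_{T_s}^{\beta+1}\|\bw_s\|_{\beta+2,T_s}$ and absorb the first factor into the norm. The term $l_2$ runs in exact parallel, using the analogous trace/approximation estimate for $\|p_s-R^s_h p_s\|_{\partial T_s}\le Ch_{T_s}^{\gamma_s+1/2}\|p_s\|_{\gamma_s+1,T_s}$; here the splitting via $Q_b\bv_{s,0}$ gives the two analogous contributions, both of order $h_s^{\gamma_s+1}$.

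The estimate for $l_3$ is the simplest: apply Cauchy--Schwarz on $\Gamma_h$, use (\ref{pro-est4}) to get $\|(\bw_s-Q_b\bw_s)\cdot\btau\|_{\Gamma_h}\le Ch_s^{\beta+1}\|\bw_s\|_{\beta+1,\Gamma}$, and absorb $\|\mu^{1/2}\mathbb{K}^{-1/4}\bv_{s,b}\cdot\btau\|_{\Gamma_h}\le\|\bv_{s,h}\|_{V^s_h}$ using boundedness of $\mu\mathbb{K}^{-1/2}$. The stabilizer term is handled by writing $Q_b Q_0\bw_s-Q_b\bw_s=Q_b(Q_0\bw_s-\bw_s)$, invoking $L^2$ non-expansiveness of $Q_b$ on edges, then applying the trace inequality and (\ref{pro-est1}) to get $\|Q_0\bw_s-\bw_s\|_{\partial T_s}\le Ch_{T_s}^{\alpha_s+1/2}\|\bw_s\|_{\alpha_s+1,T_s}$; combined with the $h_{T_s}^{-1}$ factor in $s(\cdot,\cdot)$ and Cauchy--Schwarz against $h_{T_s}^{-1/2}\|Q_b\bv_{s,0}-\bv_{s,b}\|_{\partial T_s}$, this produces the advertised $h_s^{\alpha_s}$ rate.

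The main obstacle is obtaining the sharp $h_s^{\beta+1}$ rate in $l_1$ and $l_2$ rather than a suboptimal $h_s^{\beta+1/2}$; a naive Cauchy--Schwarz on the full $\bv_{s,0}-\bv_{s,b}$ would not exploit any gain. The splitting via $Q_b\bv_{s,0}$ is what rescues one extra half power: on the ``interior-minus-projection'' piece an edge Poincar\'e estimate contributes $h_{T_s}^{1/2}$, while on the ``projection-minus-boundary'' piece the stabilizer weight $h_{T_s}^{-1/2}$ in the norm $\|\cdot\|_{V^s_h}$ absorbs a matching $h_{T_s}^{1/2}$ factor. Verifying that the powers balance out consistently with the admissible ranges of $\alpha_s,\beta,\gamma_s$ is the only bookkeeping subtlety.
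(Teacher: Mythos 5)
Your proposal is correct and follows essentially the same route as the paper: weighted Cauchy--Schwarz on each $\partial T_s$, trace plus projection estimates for the $\bw_s$/$\rho_s$ factors, and control of the test-function factor through the stabilizer weight and Lemma \ref{lemma-korn}. The only cosmetic difference is that you carry out the splitting $\bv_{s,0}-\bv_{s,b}=(\bv_{s,0}-Q_b\bv_{s,0})+(Q_b\bv_{s,0}-\bv_{s,b})$ inline for $l_1$ and $l_2$, whereas the paper packages exactly that argument into the auxiliary estimate (\ref{rem-est1}) and then applies a single weighted Cauchy--Schwarz.
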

\begin{proof}
Using Cauchy Schwarz inequality, $(\ref{rem-est1})$ and $(\ref{pro-est2})$, we have
\begin{eqnarray*}
l_1(\bw_s,\bv_{s,h})&=&2\nu\sum_{T\in\mathcal{T}_{s,h}}\langle \bv_{s,0}-\bv_{s,b}, D(\bw_s)\cdot\bn-(\mathbf{Q}_hD(\bw_s))\cdot\bn\rangle_{\partial T_s}\\
&\leq&C\left( \sum_{T\in \mathcal{T}_{s,h}}h^{-1}_{T_s}\|\bv_{s,0}-\bv_{s,b}\|^2_{\partial T_s}\right)^{1/2}\left(\sum_{T\in\mathcal{T}_{s,h}}h_{T_s}\|D(\bw_s)-\mathbf{Q}D(\bw_s)\|^2_{\partial T_s}\right)^{1/2}\\
&\leq&Ch_s^{\beta+1}\|\bw_s\|_{\beta+2,\Omega_s}\|\bv_{s,h}\|_{V^s_h}.
\end{eqnarray*}

The similarly technique can be applied to the following estimate,
\begin{eqnarray*}
l_2(\rho_s,\bv_{s,h})&=&\sum_{T_s\in\mathcal{T}_{s,h}}\langle \bv_{s,0}-\bv_{s,b},(\rho_s-R^s_h\rho_s)\bn\rangle_{\partial T_s}\\
&\leq&\left( \sum_{T\in \mathcal{T}_{s,h}}h^{-1}_{T_s}\|\bv_{s,0}-\bv_{s,b}\|^2_{\partial T_s}\right)^{1/2}\left( \sum_{T\in\mathcal{T}_{s,h}}h_{T,s}\|\rho_s-R^s_h\rho_s\|^2_{\partial T_s}\right)^{1/2}\\
&\leq&Ch_s^{\gamma_s+1}\|\rho_s\|_{\gamma_s+1}\|\bv_{s,h}\|_{V^s_h}.
\end{eqnarray*}
By the definition of the norm $\|\cdot\|^s_V$ and $(\ref{pro-est4})$,
\begin{eqnarray*}
l_3(\bw_s,\bv_{s,h})&=&\sum_{e\in\Gamma_h}\langle\mu\mathbb{K}^{-\frac12}(\bu_{s}-Q_b\bw_s)\cdot\tau,\bv_{s,b}\tau\rangle_e\\
&\leq&C\|\bw_s-Q_b\bw_s\|_{\Gamma}\|\bv_{s,h}\|_{V^s_h}\\
&\leq&Ch_s^{\beta+1}\|\bw_s\|_{\beta+1,\Gamma}\|\bv_{s,h}\|_{V^s_h}.
\end{eqnarray*}
Finally, from the property of $Q_h$, trace inequality $(\ref{trace-thm})$, we know that
\begin{eqnarray*}
s(Q_h\bw_s,\bv_{s,h})&=&\sum_{T\in \mathcal{T}_{s,h}}h^{-1}_{T_s}\langle Q_0\bw_s-\bw_{s},Q_b\bv_{s,0}-\bv_{s,b}\rangle_{\partial T_s}\\
&\leq&C\left( \sum_{T_s\in\mathcal{T}_{s,h}}h^{-1}_{T_{s}}\|Q_0\bw_s-\bw_s\|^2_{\partial T_s}\right)^{1/2}\left(\sum_{T\in\mathcal{T}_{s,h}}h^{-1}_{T_s}\|Q_b\bv_{s,0}-\bv_{s,b}\|^2_{\partial T_s}\right)^{1/2}\\
&\leq& Ch_s^{\alpha_s}\|\bw_s\|_{\alpha_s+1}\|\bv_{s,h}\|_{V^s_h}.
\end{eqnarray*}

\end{proof}

\end{appendix}


\begin{thebibliography}{1}
\bibitem{Sobolev}
{\sc R. A. Adams}, {\em{Sobolev Spaces}}, Academic Press, New York, 1975.

\bibitem{multi-grid1}
{\sc T. Arbogast and M. Gomez}, {\em {A discretization and multigrid solver for a Darcy-Stokes system of three dimensional vuggy porous media}}, Comput. Geosci., 13(2009), pp.331-348.

\bibitem{MFE1}
{\sc T. Arbogast, M. Wheeier, I. Yotov}, {\em {Mixed finite elements for elliptic problems with tensor coefficients as cell-centered finite differences }}, SIAM. J. Numer. Anal., 34 (1997), pp. 828-852.

\bibitem{MR0365287}
{\sc F. Brezzi}, {\em {On the existence, uniqueness and approximation of
  saddle-point problems arising from Lagrangian multipliers}}, Rev. Fr. caise Autom. Informat. Rech. Op'erationnelle S'er. Rouge, 8 (1974), pp.129-151.

\bibitem{Brenner2003}
{\sc S. C. Brenner}, {\em {Korn's inequalities for piecewise H1 vector fields}}, Math. Comput., 73 (2003), pp.1067-1087.

\bibitem{BDFM}
{\sc F. Brezzi, J. Douglas, Jr., M. Fortin, and L. D. Marini}, {\em {Efficient rectangular mixed finite
elements in two and three space variables}}, RAIRO Mod$\grave{e}$l. Math. Anal. Num$\grave{e}$r., 21 (1987), pp. 581-604.

\bibitem{BDM}
{\sc F. Brezzi, J. Douglas, Jr., and L. D. Marini}, {\em{Two families of mixed elements for second
order elliptic problems}}, Numer. Math., 88 (1985), pp. 217-235.

\bibitem{interface1}
{\sc S. Beavers, D. Joseph},{\em{ Boundary conditions at a naturally impermeable wall }}, J. Fluid. Mech., 30 (1967), pp. 197-207.

\bibitem{MFE2}
{\sc F. Brezzi, M. Fortin}, {\em {Mixed and hybrid finite element methods }}, Springer Serises in Computational Mathematics, Springer-Verlag, New York, 1991.

\bibitem{boundary integral1}
{\sc Y. Boubendir and S. Tlupova}, {\em{Stokes-Darcy boundary integral solutions using preconditioners}}, J. Comput. Phys., 228 (2009), pp.8627-8641.

\bibitem{domain decomposition1}
{\sc Y. Cao, M. Gunzburger, X. He an X. Wang}, {\em {Robin-Robin domain decomposition methods for the steady Stokes-Darcy model with Beaver-Joseph interface condition }}, Numer. Math., 117(2011), pp.601-629.

\bibitem{pre1}
{\sc M. Cai, M. Mu, J. Xu}, {\em {Preconditioning techniques for a mixed Stokes/Darcy model in porous media applications}}, J. Comput. Appl. Math., 233(2009), pp.346-355.

\bibitem{WG1}
{\sc W. Chen, F. Wang, Y. Wang}, {\em {Weak Galerkin method for the coupled Darcy-Stokes flow }}, IMA. J. Numer. Anal., 36 (2016), pp. 897-921.

\bibitem{application3-first}
{\sc M. Discacciati, E. Miglio, A. Quarteroni},{\em{ Mathematical and numerical models for coupling surface and groundwater flows}}, Appl. Numer. Math., 43(2002), pp. 57-74.

\bibitem{early work1}
{\sc D. K. Gartling, C. E. Hickox, R. C. Givler},{\em{ Simulation of coupled viscous and porous flow problems}}, Comp. Fluid Dynamics, 7(1996), pp. 23-48.

\bibitem{L2-2}
{\sc V. Girault, G. Kanschat, B. $Rivi\grave{e}re$},{\em{ Error analysis for a monolithic discretization of coupled Darcy and Stokes problems}},  J. Numer. Math., 22(2014), pp. 109-142.


\bibitem{application2}
{\sc N. Hanspal, A. Waghode, V. Nassehi, R. Wakeman}, {\em{ Numerical analysis of coupled Stokes/Darcy flows in industrial filtrations}}, Transport Porous Med., 64 (2006), pp. 73-101.

\bibitem{uni1}
{\sc Q. Hong, F. Wang, S. Wu, J. Chao}, {\em{ A unified study of continuous and discontinuous Galerkin methods}}, Sci. China Math., 62 (2019), pp. 1-32.

\bibitem{uni2}
{\sc Q. Hong, J. Xu}, {\em{ Uniform stability and error analysis for some discontinuous Galerkin methods}}, arXiv:1805.09670.

\bibitem{interface3}
{\sc W. $J\ddot{a}ger, A. Mikeli\acute{c}$ },  {\em{On the interface boundary condition of Beavers, Joseph, and Saffman }}, SIAM J. Appl. Math., 60 (2000), pp. 1111-1127.

\bibitem{L2-1}
{\sc G. Kanschat, B. $Rivi\grave{e}re$},  {\em{A strongly conservative finite element method for the coupling of Stokes and Darcy flow}}, J. Comput. Phys., 229 (2010), pp. 5933-5943.

\bibitem{WG3}
{\sc R. Li, Y. Gao, J. Li, Z. Chen}, {\em{A weak Galerkin finite element method for a coupled Stokes-Darcy problem on general meshes}}, J. Comput. Appl. Math., 334(2018), pp. 111-127.

\bibitem{WG2}
{\sc R. Li, J. Li, X. Liu, Z. Chen}, {\em{A weak Galerkin finite element method for a coupled Stokes-Darcy problem }}, Numer. Methods Partial Differential Equations, 33(2017), pp. 1352-1373.

\bibitem{compare}
{\sc G. Lin, J. Liu, F. Sadre-Marandi}, {\em{A comparative study on the weak Galerkin, discontinuous Galerkin, and mixed finite element methods}},
J. Comput. Appl. Math., 273(2015), pp. 346-362.

\bibitem{ell-reg}
{\sc J. L. Lions and E. Magenes}, {\em {Non-Homogeneous Boundary Value Problems and Applications}}, Springer-Verlag, New York, 1972.

\bibitem{well-posedness}
{\sc W. Layton, F. Schieweck, and I. Yotov}, {\em{Coupling fluid flow with porous media flow }}, SIAM J. Numer. Anal., 40 (2003), pp. 2195-2218.

\bibitem{application4}
{\sc W. Layton, H. Tran, C. Trenchea}, {\em{ Analysis of long time stability and errors of two partitioned methods for uncoupling evolutionary groundwater-surface water flows}}, SIAM. I. Numer. Anal., 51(2013), pp. 248-272.

\bibitem{MFE3}
{\sc A. Masud, T. Hughes}, {\em{A stabilized mixed finite element method for Darcy flow}}, Comput. Methods Appl. Mech. Engrg., 191 (2002), pp. 4341-4370.

\bibitem{WG-implement}
{\sc L. Mu, J. Wang, Y. Wang, X, Ye}, {\em{ A computational study of the weak Galerkin method for second-order elliptic equations }}, Numer. Algorithms, 63(2012), pp. 753-777.

\bibitem{wg-application5}
{\sc L. Mu, J. Wang, S. Zhang, X. Ye}, {\em{A weak Galerkin finite element method for the Maxwell equations}}, J. Sci. Comput., 65 (2015), pp. 363-386.

\bibitem{multi-grid2}
{\sc M. Mu and J. Xu}, {\em {A two- grid method of a mixed Stokes-Darcy model for coupling fluid flow with porous media flow }}, SIAM. J. Numer. Anal., 45(2009), pp. 115-140.

\bibitem{application1}
{\sc V. Nassehi}, {\em{Modelling of combined Navier-Stokes and Darcy flows in crossflow membrane filtration}}, Chen.Eng.Sci. 53(1998), pp.1253-1265.

\bibitem{half-DG}
{\sc $B.~Rivi\grave{e}re$, I. Yotov}, {\em {Locally conservative coupling of Stokes and Darcy flow }}, 42(2005), pp. 1959-1977.

\bibitem{DG}
{\sc $B.~ Rivi\grave{e}re$}, {\em {Analysis of a discontinuous finite element method for the coupled Stokes and Darcy problem }}, J. Sci. Comput., 22 /23(2005), pp. 479-500.

\bibitem{RT}
{\sc R. A. Raviart and J. M. Thomas}, {\em{A mixed finite element method for 2nd order elliptic
problems, in Mathematical Aspects of the Finite Element Method }}, Lecture Notes in Math.
606, Springer-Verlag, New York, 1977, pp.292-315.


\bibitem{interface2}
{\sc P. Saffman}, {\em{ On the boundary condition at the surface of a porous media }}, Stud. Appl. Math., 50 (1971), pp. 292-315.


\bibitem{early work2}
{\sc A. G. Salinger,  R. Aris, J. J. Derby}, {\em{ Finite element formulations for large-scale, coupled flows in adjacent porous and open fluid domains}}, Int. Jour. for Numerical Methods in Fluids, 18 (1994), pp. 1185-1209.

\bibitem{time-stepping}
{\sc L. Shan, H. Zheng}, {\em{ Partitioned time stepping method for fully evolutionary Stokes-Darcy flow with Beavers-Joseph interface conditions} }, SIAM. J. Numer. Anal., 51(2013), pp.813-839.

\bibitem{boundary integral2}
{\sc S. Tlupova, R. Cortez}, {\em {Boundary integral solutions of coupled Stokes and Darcy flow }}, J. Comput. Phys., 228 (2009), pp. 158-179.

\bibitem{domain decomposition2}
{\sc D. Vassilev, C. Wang, and I. Yotov}, {\em {Domain decomposition for coupled Stokes and Darcy flow }}, Comput. Mehods Appl. Mech. Eng., 268(2014), pp.264-283.

\bibitem{WG proposed}
{ \sc J. Wang, X. Ye}, {\em {A weak Galerkin finite element method for second-order elliptic problems }}, J. Comput. Appl. Math., 241 (2013), pp. 103-115.

\bibitem{Wang2014a}
{\sc J. Wang and X. Ye}, {\em {A weak Galerkin mixed finite element method for
  second order elliptic problems}}, Math. Comput., 83 (2014), pp.2101-2126.

\bibitem{Wang2015b}
{\sc J. Wang and X. Ye}, {\em {A weak Galerkin finite element method for the Stokes equations}}, Adv. Comput. Math., 42(2016), pp.155-174.

\bibitem{multi-grid3}
{\sc G. Wang, F. Wang, L. Chen, Y. He}, {\em {A divergence free weak virtual element method for the Stokes-Darcy problem on general meshes}}, Comput. Methods Appl. Mech. Engrg, 344(2019), pp.998-1020.

\bibitem{wg-application1}
{\sc C. Wang, J. Wang, R. Wang, R. Zhang}, {\em{A locking-free weak Galerkin finite element method for elasticity problems in the primal formulation}}, J. Comput. Appl. Math., 307(2016), pp. 346-366.

\bibitem{wg-application2}
{\sc X. Wang, Q. Zhai, R. Zhang}, {\em{The weak Galerkin method for solving the incompressible Brinkman flow }}, J. Comput. Appl. Math., 302 (2016), pp. 171-185.

\bibitem{Stokes2.2}
{\sc T. Zhang, L. Tao}, {\em {A stable weak Galerkin finite element method for Stokes problem}}, J. Comput. Appl. Math., 333(2018), pp. 235-246.

\bibitem{wg-application3}
{\sc R. Zhang, Q. Zhai}, {\em{A new weak Galerkin finite element scheme for the biharmonic equations}}, J. Sci. Comput., 64 (2015), pp. 559-585.

\bibitem{wg-application4}
{\sc Q. Zhai, R. Zhang, L. Mu}, {\em{A new weak Galerkin finite element scheme for the Brinkman model}}, Commun. Coumput. Phys., 19(2016), pp. 1409-1434.
\end{thebibliography}
\end{document}